\documentclass[11pt]{article}
\usepackage{amssymb}	
\usepackage{amsmath}    
\usepackage{epsfig}
\usepackage{latexsym}
\usepackage{multirow}
\usepackage{longtable}
\usepackage{graphicx}
\allowdisplaybreaks

\topmargin 0.5in
\textheight 8.85in
\textwidth 6.1415926535898in
\voffset -0.85in
\hoffset -0.35in
\newenvironment{proof}[1][Proof:]{\begin{trivlist} 
\item[\hskip \labelsep {\bfseries #1}]}{\end{trivlist}} 
\newcommand{\qed}{\nobreak \ifvmode \relax \else \ifdim\lastskip<1.5em \hskip-\lastskip \hskip1.5em plus0em minus0.5em \fi \nobreak \vrule height0.75em width0.5em depth0.25em\fi}

\def\R{{\bf R}}

\def\Tr{{\rm T}}
\def\T{{\rm T}}

\def\diag{{\rm diag}}

\newtheorem{algorithm}{Algorithm}[section]

\newtheorem{theorem}{Theorem}[section]
\newtheorem{lemma}{Lemma}[section]
\newtheorem{proposition}{Proposition}[section]
\newtheorem{corollary}{Corollary}[section]
\newtheorem{remark}{Remark}[section]

\usepackage{color}
\usepackage{ulem}

\begin{document}
\title{Two computationally efficient polynomial-iteration infeasible interior-point algorithms for linear programming
}
\author{Y. Yang\footnote{\normalsize Office of Research, US NRC, Two White Flint North 11545 Rockville Pike, Rockville, MD 20852-2738, United States. Email: yaguang.yang@verizon.net.} \\
}
\date{\today}

\maketitle    

\begin{abstract}
Linear programming has been one of the most extensively studied 
branches in mathematics and has been found many applications in
science and engineering.
Since the beginning of the development of interior-point methods, 
there exists a puzzling gap between the results in theory and the 
observations in numerical experience, i.e., algorithms with good
polynomial bound are not computationally efficient and algorithms
demonstrated efficiency in computation do not have a good or any 
polynomial bound. Todd raised a question in 2002: ``Can we find a 
theoretically and practically efficient way to reoptimize?'' This 
paper is an effort to close the gap. We propose two arc-search 
infeasible interior-point algorithms with infeasible central path
neighborhood wider than all existing infeasible interior-point
algorithms that are proved to be convergent. We show that the 
first algorithm is polynomial and its simplified version has a 
complexity bound equal to the best known
complexity bound for all (feasible or infeasible) interior-point
algorithms. We demonstrate the computational efficiency of the 
proposed algorithms by testing all Netlib linear programming 
problems in standard form and comparing the numerical results 
to those obtained by Mehrotra's predictor-corrector algorithm 
and a recently developed more efficient arc-search algorithm 
(the convergence of these two algorithms is unknown). We 
conclude that the newly proposed algorithms are not only 
polynomial but also computationally competitive comparing
to both Mehrotra's predictor-corrector algorithm and 
the efficient arc-search algorithm.
 
\end{abstract}

{\bf Keywords:} Polynomial algorithm, arc-search, infeasible interior-point method, linear programming.

{\bf AMS subject classifications:} 90C05, 90C51.
\newpage
 
\section{Introduction}

Interior-point method has been regarded as a mature technique of linear programming 
since the middle of 1990s \cite[page 2]{wright97} . However, there still exist several obvious gaps between 
the results in theory and the observations in computational experience. First, algorithms 
using proven techniques have inferior polynomial bound. For example, higher-order 
algorithms that use second or higher derivatives have been proved to improve the 
computational efficiency \cite{Mehrotra92,lms91,lms92} but higher-order algorithms 
have either poorer polynomial bound than the first-order algorithms \cite{mar90} or 
do not even have a polynomial bound \cite{Mehrotra92}. Second, some algorithm 
with the best polynomial bound performs poorly in real computational test. For example, 
short step interior-point algorithm, which searches optimizer in a narrow neighborhood,
has the best polynomial bound \cite{kmy89a} but 
performs very poorly in practical computation, while the long step interior-point 
algorithm \cite{kmy89b}, which searches optimizer in a larger neighborhood,
performs much better in numerical test but has inferior 
polynomial bound \cite{wright97}. Even worse, Mehrotra's predictor-corrector (MPC) algorithm, which has been widely 
regarded as the most efficient interior-poin algorithm in 
computation and is competitive to the simplex algorithm for 
large problems, has not been proved to be polynomial (it may not 
even be convergent \cite{cartis09}). It should be noted that 
lack of polynomiality was a serious concern for simplex method \cite{kleeMinty72}, 
therefore motivated ellipsoid method for linear programming \cite{Khachiyan79},
and was one of the main arguments in the early development of interior-point algorithms 
\cite{wright97,karmarkar84}. Because of these dilemmas, Todd asked in 2002 \cite{todd02} ``can we 
give a theoretical explanation for the difference between worst-case bounds and 
observed practical performance?  Can we find a theoretically and practically efficient 
way to reoptimize?''

In several recent papers, we tried to close these gaps. In \cite{yang13}, we 
proposed an arc-search interior-point algorithm for linear programming which uses 
higher-order derivatives to construct an ellipse to approximate the central path. 
Intuitively, searching along this ellipse will generate a longer step size than 
searching along any straight line. Indeed, we showed that the arc-search (higher-order) algorithm 
has the best polynomial bound which has partially solved the first dilemma. 
We extended the method and proved a similar result for convex quadratic programming \cite{yang11}; 
and we demonstrated some promising numerical test results. 


The algorithms proposed in \cite{yang13,yang11} assume that 
the starting point is feasible and the central path does exist. Unfortunately, 
most Netlib test problems (and real problems) do not meet these assumptions.
As a matter of the fact, most Netlib test problems do not even have an interior-point
as noted in \cite{cg06}. To demonstrate the superiority of the arc-search
strategy proposed in \cite{yang13,yang11} for practical problems, 
we devised an arc-search infeasible interior-point algorithm in \cite{yang16}, which 
allows us to test a lot of more Netlib problems. The proposed algorithm is very similar to  
Mehrotra's algorithm but replaces search direction by an arc-path suggested
in \cite{yang13,yang11}. The comprehensive numerical test 
for a larger pool of Netlib problems reported in \cite{yang16} clearly shows 
the superiority of arc-search over traditional line-search method. 

Because the purpose of \cite{yang16} is to demonstrate the 
computational merit of the arc-search method, the algorithm in 
\cite{yang16} is a mimic of Mehrotra's algorithm and we have not 
shown its convergence\footnote{In fact, we notice in \cite{yang16}
that Mehrotra's algorithm does not converge for several Netlib
problems.}. The purpose of this paper is to develop some infeasible 
interior-point algorithms which are both computationally competitive to 
simplex algorithms (i.e., at least as good as Mehrotra's algorithm) 
and theoretically polynomial. We first devise an algorithm slightly 
different from the one in \cite{yang16} and we show that this algorithm 
is polynomial. We then propose a simplified version of the first
algorithm. This simplified algorithm will search optimizers in a 
neighborhood larger than those used in short step and long step 
path-following algorithms, thereby generating potentially a 
larger step size. Yet, we want to show that the modified 
algorithm has the best polynomial 
complexity bound, in particular, we want to show that the
complexity bound is better than ${\mathcal O}(n^2L)$ in 
\cite{zhang94} which was established for an infeasible 
interior-point algorithm searching optimizers in the long-step 
central-path neighborhood, and than ${\mathcal O}(nL)$ in 
\cite{mizuno94,jmiao06,yy18} which were obtained for infeasible 
interior-point algorithms using a narrow short-step central-path 
neighborhoods. As a matter of fact, the simplified algorithm 
achieves the complexity bound 
${\mathcal O}(\sqrt{n}L)$, which is the same as the best 
polynomial bound for feasible interior-point algorithms.

To make these algorithms attractive in theory and efficient
in numerical computation, we remove some unrealistic and 
unnecessary assumption made by existing infeasible interior-point 
algorithms for proving some convergence results. First, we 
do not assume that the initial point has the 
form of $(\zeta e, 0, \zeta e)$, where $e$ is a vector of all
ones, and $\zeta$ is a scalar which is 
an upper bound of the optimal solution as defined by
$\| (x^*,s^*) \|_{\infty} \le \zeta$, which is an unknown before 
an optimal solution is found. Computationally, some most efficient 
staring point selections in \cite{Mehrotra92,lms92} do not meet this 
restriction. Second, we remove a requirement\footnote{In the
requirement of $\| (r_b^k, r_c^k) \| \le [\| (r_b^0, r_c^0) \| 
/\mu_0] \beta \mu_k$,  $\beta \ge 1$ is a constant, $k$ is the 
iteration count, $r_b^k$ and $r_c^k$, are residuals of primal 
and dual constraints respectively, and $\mu_k$ is the duality
measure. All of these notations will be given in Section 2.} that
$\| (r_b^k, r_c^k) \| \le [\| (r_b^0, r_c^0) \| /\mu_0] \beta \mu_k$, 
which is required in existing convergence analysis for infeasible
interior-point algorithms, for example,
equation (6.2) of \cite{wright97}, equation (13) of 
\cite{kojima96}, and Theorem 2.1 of \cite{mizuno94}. Our extensive numerical experience 
shows that this unnecessary requirement is the major barrier 
to achieve a large step size for the existing infeasible interior-point 
methods. This is the main reason that existing infeasible 
interior-point algorithms with proven convergence results do not 
perform well comparing to Mehrotra's algorithm which
does not have any convergence result. We demonstrate the 
computational merits of the proposed algorithms by testing 
these algorithms along with Mehrotra's algorithm and the 
very efficient arc-search algorithm in \cite{yang16} 
using Netlib problems and comparing the test results. To have
a fair comparison, we use the same initial point, the same
pre-process and post-process, and the same termination 
criteria for the four algorithms in all test problems.

The reminder of the paper is organized as follows. Section 2 is a
brief problem description. Section 3 describes the main ideas of 
the arc-search algorithms. Section 4 presents the first algorithm 
and proves its convergence. Section 5 presents the second 
algorithm, which is a simplified version of the first algorithm, 
and discusses the condition of the convergence. 
Section~\ref{implSec} provides the implementation details. 
Section 7 reports and compares the numerical test results obtained 
by the proposed algorithms, the popular Mehrotra's algorithm, and 
the recently developed efficient arc-search algorithm in 
\cite{yang16}. The conclusions are summarized in the last section.

\section{Problem Descriptions}

Consider the Linear Programming in the standard form:
\begin{eqnarray}
\min \hspace{0.05in} c^{\T}x, \hspace{0.15in} \mbox{\rm subject to} 
\hspace{0.1in}  Ax=b, \hspace{0.1in} x \ge 0,
\label{LP}
\end{eqnarray}
where $A \in {\bf R}^{m \times n}$, $b \in {\bf R}^{m} $, $c \in {\bf R}^{n}$ 
are given, and $x \in {\bf R}^n$  is the vector to be optimized. Associated 
with the linear programming is the dual programming that is also presented in the standard form:
\begin{eqnarray}
\max \hspace{0.05in} b^{\T}\lambda, \hspace{0.15in} \mbox{\rm subject to} 
\hspace{0.1in}  A^{\T}\lambda+s=c, \hspace{0.1in} s \ge 0,
\label{DP}
\end{eqnarray}
where dual variable vector $\lambda \in {\bf R}^{m}$, and dual slack vector 
$s \in {\bf R}^{n}$. 

Interior-point algorithms require all the iterates satisfying the
conditions $x > 0$ and $s > 0$. Infeasible interior-point 
algorithms, however, allow the iterates deviating from the
equality constraints. 
Throughout the paper, we will denote the residuals of the equality constraints 
(the deviation from the feasibility) by
\begin{equation}
r_b=Ax-b, \hspace{0.1in} r_c=A^{\T}\lambda + s-c,
\label{residuals}
\end{equation}
the duality measure by 
\begin{equation}
\mu=\frac{x^{\T}s}{n},
\label{duality}
\end{equation}
the identity matrix of any dimension by $I$, the vector of all 
ones with appropriate dimension by $e$, the Hadamard (element-wise) 
product of two vectors $x$ and $s$ by $x \circ s$. To make the notation 
simple for block column vectors, we will denote, for example, a point in 
the primal-dual problem $[x^{\T}, \lambda^{\T}, s^{\T}]^{\T}$ 
by $(x,\lambda, s)$. We will denote the initial point (a vector) 
of any algorithm by $(x^0,\lambda^0, s^0)$, the corresponding duality 
measure (a scalar) by $\mu_0$, the point after the $k$th iteration by 
$(x^k,\lambda^k, s^k)$, the corresponding duality measure by $\mu_k$, the optimizer by $(x^*, \lambda^*, s^*)$, and the corresponding duality measure by $\mu_*$. For $x \in \R^n$, we will denote the $i$th component of $x$ by $x_i$, the Euclidean 
norm of $x$ by $\| x \|$, a related diagonal matrix by 
$X \in \R^{n \times n}$ whose diagonal elements are the 
components of $x$. Finally, we denote by $\emptyset$ the empty set.

The central-path ${\mathcal C}$ of the primal-dual linear programming problem
is parameterized by a scalar $\tau >0$ as follows. For each interior 
point $(x, \lambda, s) \in {\mathcal C}$ on the central path, there 
is a $\tau >0$ such that 
\begin{subequations}
\begin{align}
Ax=b  \label{patha} \\
A^{\T}\lambda+s=c \label{pathb} \\
(x,s) >0 \label{pathd}  \\
x_is_i = \tau, \hspace{0.1in} i=1,\ldots,n \label{pathc}.
\end{align}
\label{centralpath}
\end{subequations}
\noindent
Therefore, the central path is an arc in ${\bf R}^{2n+m}$ parameterized as a 
function of $\tau$ and is denoted as 
\begin{equation}
{\mathcal C} = \lbrace(x(\tau), \lambda(\tau), s(\tau)): \tau >0 \rbrace.
\end{equation} 
As $\tau \rightarrow 0$, the central path $(x(\tau), \lambda(\tau), 
s(\tau))$ represented by (\ref{centralpath}) approaches to a solution of 
the linear programming problem represented by (\ref{LP}) because 
(\ref{centralpath}) reduces to the KKT condition as $\tau \rightarrow 0$. 

Because of the high cost of finding the initial feasible point and its corresponding central-path 
described in (\ref{centralpath}), we consider a modified problem which allows infeasible iterates
on an arc ${\mathcal A}(\tau)$ which satisfies the following conditions.
\begin{subequations}
\begin{align}
Ax(\tau)-b =\tau r_b^k:=r_b^k(\tau), \label{arcaa} \\
A^{\T}\lambda(\tau)+s(\tau)-c = \tau r_c^k:=r_c^k(\tau), \label{arcbb} \\
(x(\tau),s(\tau)) > 0, \label{arcdd}  \\
x(\tau) \circ s(\tau) = \tau x^k \circ s^k.  \label{arcc}
\end{align}
\label{neiborArc}
\end{subequations}
where $x(1)=x^k$, $s(1)=s^k$, $\lambda(1)=\lambda^k$, $r_b(1)=r_b^k$, $r_c(1)=r_c^k$,
$(r_b (\tau), r_c (\tau)) = \tau (r_b^k, r_c^k)  \rightarrow 0$ as $\tau \rightarrow 0$. 
Clearly, as $\tau \rightarrow 0$, the arc defined as above
will approach to an optimal solution of (\ref{LP}) because 
(\ref{neiborArc}) reduces to KKT condition as $\tau \rightarrow 0$.
We restrict the search for the optimizer in either the 
neighborhood ${\mathcal F}_1$ or the neighborhood ${\mathcal F}_2$ 
defined as follows: 
\begin{subequations}
\begin{gather}
{\mathcal F}_1=\lbrace(x, \lambda, s): \hspace{0.1in} (x,s) >0,
\hspace{0.1in} x_i^ks_i^k \ge \theta \mu_k \rbrace, 
\label{infeasible1} \\
{\mathcal F}_2=\lbrace(x, \lambda, s): \hspace{0.1in} (x,s) >0 \rbrace,
\label{infeasible2}
\end{gather}
\end{subequations}
where $\theta \in (0,1)$ is a constant. The neighborhood 
(\ref{infeasible2}) is clearly the widest neighborhood used in all existing 
literatures. Throughout the paper, we make the following assumption. 
\newline
\newline
{\bf Assumption 1:}
\begin{itemize}
\item[] {\it $A$ is a full rank matrix. }
\end{itemize}

Assumption 1 is trivial and non-essential as $A$ can always be 
reduced to meet this condition in polynomial operations. With this 
assumption, however, it will significantly simplify the 
mathematical treatment. In Section~\ref{implSec}, we will describe 
a method based on \cite{andersen95} to check if a problem 
meets this assumption. If it is not, the method will reduce 
the problem to meet this assumption. 
\newline
\newline
{\bf Assumption 2:}
\begin{itemize}
\item[] {\it There is at least an optimal solution of (\ref{LP}), 
i.e., the KKT condition holds. }
\end{itemize}

This assumption implies that there is at least one feasible 
solution of (\ref{LP}), which will be used in our convergence
analysis.

\section{Arc-Search for Linear Programming}
\label{arcSearch}

Although the infeasible central path defined in (\ref{neiborArc}) 
allows for infeasible initial point, the calculation of (\ref{neiborArc})
is still not practical. We consider a simple approximation of 
(\ref{neiborArc}) proposed in \cite{yang16}.
Starting from any point $(x^k, \lambda^k, s^k)$ with 
$(x^k, s^k)>0$, we consider a specific arc which is defined by 
the current iterate and $(\dot{x}, \dot{\lambda}, \dot{s})$ and 
$(\ddot{x}, \ddot{\lambda}, \ddot{s})$ as follows:
\begin{equation}
\left[
\begin{array}{ccc}
A & 0 & 0\\
0 & A^{\T} & I \\
S^k & 0 & X^k
\end{array}
\right]
\left[
\begin{array}{c}
\dot{{x}} \\ \dot{\lambda}  \\  \dot{{s}}
\end{array}
\right]
=\left[
\begin{array}{c}
r_b^k \\ r_c^k \\ {x^k} \circ {s^k} 
\end{array}
\right],
\label{doty}
\end{equation}
\begin{equation}
\left[
\begin{array}{ccc}
A & 0 & 0\\
0 & A^{\T} & I \\
S^k & 0 & X^k
\end{array}
\right]
\left[
\begin{array}{c}
\ddot{{x}}(\sigma_k) \\ \ddot{\lambda}(\sigma_k)  \\  \ddot{{s}}(\sigma_k)
\end{array}
\right]
=\left[
\begin{array}{c}
0 \\ 0 \\ -2\dot{x} \circ \dot{s} + \sigma_k \mu_k e
\end{array}
\right],
\label{ddoty}
\end{equation}
where $\sigma_k \in [0,1]$ is the centering parameter introduced 
in \cite[page 196]{wright97}, and the duality measure $\mu_k$ 
is evaluated at $(x^k, \lambda^k, s^k)$. We emphasize that 
the second derivatives are functions of $\sigma_k$ which 
we will carefully select in the range of $0< \sigma_{\min} \le \sigma_k \le 1$.
A crucial but normally not emphasized fact is that if $A$ is full rank, 
and $X^k$ and $S^k$ are positive diagonal matrices, then the matrix
\[
\left[
\begin{array}{ccc}
A & 0 & 0\\
0 & A^{\T} & I \\
S^k & 0 & X^k
\end{array}
\right]
\]
is nonsingular. This guarantees that (\ref{doty}) and (\ref{ddoty})
have unique solutions, which is important not only in theory but 
also in computation, to all interior-point algorithms in linear 
programming. Therefore, we introduce the following assumption.
\newline
\newline
{\bf Assumption 3:}
\begin{itemize}
\item[] {\it $X^k > 0$ and $S^k> 0$ are bounded below from
zeros for all $k$ iterations until the program is terminated. }
\end{itemize}
A simple trick of rescaling the step-length introduced by Mehrotra
\cite{Mehrotra92} (see our implementation in Section 6.10) 
guarantees that this assumption holds. 

Given the first and second derivatives defined by (\ref{doty}) and (\ref{ddoty}),
an analytic expression of an ellipse, which is an approximation 
of the curve defined by (\ref{neiborArc}),  is derived in 
\cite{yang09,yang13}.  

\begin{theorem}
Let $(x(\alpha),\lambda(\alpha),s(\alpha))$ be an ellipse 
defined by $(x(\alpha),\lambda(\alpha),s(\alpha)) |_{\alpha=0}
=(x^k,\lambda^k,s^k)$ and its first 
and second derivatives $(\dot{x}, \dot{\lambda}, \dot{s})$ and 
$(\ddot{x}, \ddot{\lambda}, \ddot{s})$ which are defined by
(\ref{doty}) and (\ref{ddoty}). Then the ellipse is an
approximation of ${\mathcal A}(\tau)$ and is given by
\begin{equation}
x(\alpha,\sigma) = x^k - \dot{x}\sin(\alpha)+\ddot{x}(\sigma) (1-\cos(\alpha)).
\end{equation}
\begin{equation}
\lambda(\alpha,\sigma) = \lambda^k-\dot{\lambda}\sin(\alpha)
+\ddot{\lambda}(\sigma) (1-\cos(\alpha)).
\end{equation}
\begin{equation}
s(\alpha,\sigma) = s^k - \dot{s}\sin(\alpha)+\ddot{s}(\sigma) (1-\cos(\alpha)).
\end{equation}
\label{ellipseSX}
\end{theorem}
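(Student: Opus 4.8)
The plan is to derive the three displayed formulas by first writing down the most general ellipse in $\R^{2n+m}$, then fixing its coefficient vectors by interpolating the point $(x^k,\lambda^k,s^k)$ together with the first- and second-derivative data furnished by (\ref{doty}) and (\ref{ddoty}); the qualitative assertion that the resulting curve approximates ${\mathcal A}(\tau)$ is afterwards confirmed by a brief Taylor expansion.

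First I would invoke the elementary fact that every ellipse lies in a two-dimensional affine plane and can therefore be parametrized as
\[
\begin{pmatrix} x(\alpha) \\ \lambda(\alpha) \\ s(\alpha) \end{pmatrix} = C + p\cos\alpha + q\sin\alpha ,
\]
where $C,p,q \in \R^{2n+m}$ are constant vectors ($C$ the center, $p$ and $q$ a pair of conjugate semi-diameters spanning the plane; if $p$ and $q$ were parallel the ellipse would degenerate to a segment, which is harmless here). Differentiating in $\alpha$, the value, the first derivative, and the second derivative of this curve at $\alpha=0$ are $C+p$, $q$, and $-p$, respectively, so that three interpolation conditions pin down $C$, $p$, $q$ uniquely.

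Next I would read off those three conditions. The base point gives $C+p=(x^k,\lambda^k,s^k)$. For the derivative data, note that differentiating the defining relations (\ref{arcaa})--(\ref{arcc}) of the exact arc ${\mathcal A}(\tau)$ once with respect to $\tau$ and setting $\tau=1$ (where ${\mathcal A}(1)=(x^k,\lambda^k,s^k)$) reproduces system (\ref{doty}) verbatim, so $(\dot x,\dot\lambda,\dot s)$ is the first $\tau$-derivative of $(x(\tau),\lambda(\tau),s(\tau))$ at $\tau=1$; differentiating twice and setting $\tau=1$ reproduces (\ref{ddoty}) except for the extra term $\sigma_k\mu_k e$ in the third block equation, so $(\ddot x,\ddot\lambda,\ddot s)$ is the second $\tau$-derivative modified by the standard centering perturbation. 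Since $\tau$ decreases from $1$ toward $0$ as one advances along ${\mathcal A}$ toward an optimizer, I reparametrize by $\alpha$ with $\tau=1-\sin\alpha$ near $\alpha=0$, so that $d\tau/d\alpha=-1$ and $d^2\tau/d\alpha^2=0$ at $\alpha=0$; by the chain rule the ellipse should then have first derivative $-(\dot x,\dot\lambda,\dot s)$ and second derivative $(\ddot x,\ddot\lambda,\ddot s)$ (the latter up to the centering term) at $\alpha=0$. Matching these against $q$ and $-p$ gives $q=-(\dot x,\dot\lambda,\dot s)$, $p=-(\ddot x,\ddot\lambda,\ddot s)$, hence $C=(x^k,\lambda^k,s^k)+(\ddot x,\ddot\lambda,\ddot s)$; substituting back into $C+p\cos\alpha+q\sin\alpha$ and collecting the $\cos\alpha$ contributions into the factor $1-\cos\alpha$ produces exactly the three componentwise expressions in the statement.

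Finally, to justify the word ``approximation'' I would substitute the constructed curve back into (\ref{arcaa})--(\ref{arcc}). Using $Ax^k-b=r_b^k$ together with the first block equations of (\ref{doty}) and (\ref{ddoty}) gives $Ax(\alpha)-b=(1-\sin\alpha)r_b^k$ and $A^{\T}\lambda(\alpha)+s(\alpha)-c=(1-\sin\alpha)r_c^k$ exactly, that is, $\tau r_b^k$ and $\tau r_c^k$ with $\tau=1-\sin\alpha$. For the complementarity relation, expanding $x(\alpha)\circ s(\alpha)$, using the third block equations of (\ref{doty}) and (\ref{ddoty}), and invoking the identity $\sin^2\alpha-2(1-\cos\alpha)=-(1-\cos\alpha)^2$ yields $x(\alpha)\circ s(\alpha)=(1-\sin\alpha)\,x^k\circ s^k+\sigma_k\mu_k(1-\cos\alpha)e+{\mathcal O}(\alpha^3)$, i.e. $\tau\,x^k\circ s^k$ plus the intended centering perturbation plus a third-order remainder; thus the ellipse osculates ${\mathcal A}(\tau)$ at $(x^k,\lambda^k,s^k)$ to the advertised order. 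The one genuinely delicate point in all of this is keeping the orientation and sign conventions of the $\alpha$-parametrization consistent with those embedded in (\ref{doty}) and (\ref{ddoty}); once that is settled, the construction reduces to a single small linear solve.
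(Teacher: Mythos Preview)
Your derivation is correct. Writing a generic ellipse as $C+p\cos\alpha+q\sin\alpha$, reading off the value and first two derivatives at $\alpha=0$, and imposing the interpolation conditions $C+p=(x^k,\lambda^k,s^k)$, $q=-(\dot x,\dot\lambda,\dot s)$, $-p=(\ddot x,\ddot\lambda,\ddot s)$ does yield exactly the three displayed formulas; the reparametrization $\tau=1-\sin\alpha$ and the chain-rule computation $d\tau/d\alpha|_{0}=-1$, $d^2\tau/d\alpha^2|_{0}=0$ correctly explain the sign pattern. Your verification that the primal and dual residuals reduce \emph{exactly} by the factor $1-\sin\alpha$ (Lemma~\ref{basic} in the paper), and that the complementarity relation matches $\tau\,x^k\circ s^k$ to leading order plus the centering term, is also right; the identity $\sin^2\alpha-2(1-\cos\alpha)=-(1-\cos\alpha)^2$ is precisely what the paper uses later in (\ref{thirdBound1}).

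As for the comparison: the paper does not actually prove Theorem~\ref{ellipseSX} in the text. The sentence preceding the theorem defers the derivation to the earlier works \cite{yang09,yang13}, and no argument is given here. Your write-up therefore supplies what the paper omits, and your approach---parametrize the ellipse, match two derivatives, then check the approximation order---is the standard one and is essentially what those references contain. One minor remark: in your final paragraph the remainder in the complementarity expansion is in fact $\mathcal O(\alpha^3)$ only because of the $\sin\alpha(1-\cos\alpha)$ cross terms; the $(1-\cos\alpha)^2$ pieces are $\mathcal O(\alpha^4)$, so your order bookkeeping is consistent but could be stated a bit more sharply.
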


It is clear from the theorem that the search of optimizer is 
carried out along an arc parameterized by $\alpha$ with
an adjustable parameter $\sigma$. 
Therefore, we name the search method as to arc-search.
We will use several simple results that can easily be derived from (\ref{doty}) and (\ref{ddoty}). 
To simplify the notations, we will drop the superscript and subscript $k$ unless a confusion 
may be introduced. We will also use $(\ddot{x}, \ddot{\lambda}, \ddot{s})$ instead of 
$(\ddot{x}(\sigma), \ddot{\lambda}(\sigma), \ddot{s}(\sigma))$.
Using the relations 
\[
\dot{x}=A^{\T}(AA^{\T})^{-1}r_b + \hat{A} v, \hspace{0.1in}
A^{\T} \dot{\lambda} + \dot{s} = r_c, \hspace{0.1in}
X^{-1}\dot{x}+S^{-1} \dot{s}=e,
\]
and using the similar derivation of \cite[Lemma 3.5]{yang13} or \cite[Lemma 3.3]{yang13a}, we have 
\begin{eqnarray}
& & X^{-1}\dot{x}= X^{-1}[A^{\T}(AA^{\T})^{-1}r_b + \hat{A} v],
\hspace{0.05in} S^{-1}(A^{\T} \dot{\lambda} +\dot{s}) = S^{-1}r_c,
\hspace{0.05in} X^{-1}\dot{x}+S^{-1} \dot{s}=e
\nonumber \\
 & \iff & \left[ \begin{array}{cc}
X^{-1}\hat{A}, &  -S^{-1}A^{\T} 
\end{array} \right]
\left[ \begin{array}{c}
v \\ \dot{\lambda} 
\end{array} \right]
= e -X^{-1}A^{\T}(AA^{\T})^{-1}r_b -S^{-1}r_c
\nonumber \\
 & \iff & \left[ \begin{array}{c}
v \\ \dot{\lambda} 
\end{array} \right]
= \left[ \begin{array}{c}
( \hat{A}^{\Tr}SX^{-1} \hat{A})^{-1} \hat{A}^{\Tr}S,  
 \\  -(AXS^{-1}A^{\Tr})^{-1}AX 
\end{array} \right]
\left[  e -X^{-1}A^{\T}(AA^{\T})^{-1}r_b -S^{-1}r_c  \right].
\end{eqnarray}
This gives the following analytic solutions for (\ref{doty}).
\begin{subequations}
\begin{align}
\dot{x}= \hat{A} (\hat{A}^{\T}SX^{-1}\hat{A})^{-1}\hat{A}^{\T} S (e-X^{-1}A^{\T}(AA^{\T})^{-1}r_b 
- S^{-1} r_c )+A^{\T}(AA^{\T})^{-1}r_b,   \\
\dot{\lambda}=- ( {A}XS^{-1} {A}^{\T})^{-1} A X (e-X^{-1}A^{\T}(AA^{\T})^{-1}r_b - S^{-1} r_c ),   \\
\dot{s}= {A}^{\T} ( {A}XS^{-1} {A}^{\T})^{-1} A X (e-X^{-1}A^{\T}(AA^{\T})^{-1}r_b - S^{-1} r_c )+r_c,  
\end{align}
\label{py1}
\end{subequations}
These relations can easily be reduced to some simpler formulas 
that will be used in Sections 6.6 and 6.8.
\begin{subequations}
\begin{align} 
 ( {A}XS^{-1} {A}^{\T})\dot{\lambda}= AXS^{-1} r_c - b, \label{pl2} \\
\dot{s}= r_c- {A}^{\T}\dot{\lambda}, \label{ps2} \\
\dot{x}= x-X S^{-1}\dot{s}. \label{px2} 
\end{align}
\label{doy3}
\end{subequations}
Several relations follow immediately from (\ref{doty}) and (\ref{ddoty}) (see also in \cite{yang16}).
\begin{lemma}
Let $(\dot{x}, \dot{\lambda}, \dot{s})$ and $(\ddot{x}, \ddot{\lambda}, \ddot{s})$ be defined in 
(\ref{doty}) and (\ref{ddoty}). Then, the following relations hold.
\begin{equation}
s \circ \dot{x} + x \circ \dot{s} = x \circ s, \hspace{0.1in} 
s^{\T} \dot{x} + x^{\T} \dot{s} = x^{\T} {s},
\hspace{0.1in}
s^{\T} \ddot{x} + x^{\T} \ddot{s} = -2\dot{x}^{\T} \dot{s} +\sigma \mu n,
\hspace{0.1in}
\ddot{x}^{\T} \ddot{s} =0.
\end{equation}
\label{simple}
\end{lemma}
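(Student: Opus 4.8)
The plan is to read off all four identities directly from the block structure of the two linear systems (\ref{doty}) and (\ref{ddoty}); no additional machinery is needed beyond the bookkeeping facts that $S\dot{x} = s\circ\dot{x}$ (since $S = \diag(s)$), that $e^{\T}(u\circ v) = u^{\T}v$ for any two vectors $u,v$ of matching dimension, and that $e^{\T}e = n$.

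First I would take the third block row of (\ref{doty}), which reads $S\dot{x} + X\dot{s} = x\circ s$. Because $S$ and $X$ are the diagonal matrices with diagonals $s$ and $x$, this is literally $s\circ\dot{x} + x\circ\dot{s} = x\circ s$, which is the first relation. Left-multiplying this vector identity by $e^{\T}$ and using $e^{\T}(s\circ\dot{x}) = s^{\T}\dot{x}$, $e^{\T}(x\circ\dot{s}) = x^{\T}\dot{s}$, and $e^{\T}(x\circ s) = x^{\T}s$ produces the second relation $s^{\T}\dot{x} + x^{\T}\dot{s} = x^{\T}s$.

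I would then apply exactly the same two steps to (\ref{ddoty}). Its third block row is $S\ddot{x} + X\ddot{s} = -2\dot{x}\circ\dot{s} + \sigma\mu e$, i.e. $s\circ\ddot{x} + x\circ\ddot{s} = -2\dot{x}\circ\dot{s} + \sigma\mu e$; hitting this with $e^{\T}$ and using $e^{\T}e = n$ gives $s^{\T}\ddot{x} + x^{\T}\ddot{s} = -2\dot{x}^{\T}\dot{s} + \sigma\mu n$, the third relation.

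For the final identity I would use the first two block rows of (\ref{ddoty}), namely $A\ddot{x} = 0$ and $A^{\T}\ddot{\lambda} + \ddot{s} = 0$; the latter gives $\ddot{s} = -A^{\T}\ddot{\lambda}$, so $\ddot{x}^{\T}\ddot{s} = -\ddot{x}^{\T}A^{\T}\ddot{\lambda} = -(A\ddot{x})^{\T}\ddot{\lambda} = 0$. I do not anticipate any real obstacle here: the lemma is purely a matter of unpacking the definitions, and the only point requiring care is keeping the Hadamard-product and diagonal-matrix notations consistent, so that the passage between $S\dot{x}$ and $s\circ\dot{x}$, and between quadratic forms and sums of componentwise products, stays transparent.
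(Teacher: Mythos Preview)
Your proposal is correct and matches the paper's approach: the paper gives no detailed proof, simply stating that the relations ``follow immediately from (\ref{doty}) and (\ref{ddoty}),'' which is precisely the unpacking you carry out. Your derivation of each identity from the block rows of the two systems is exactly the intended argument.
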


Most popular interior-point algorithms of linear programming
(e.g. Mehrotra's algorithm) use heuristics to select $\sigma$
first and then select the step size. In \cite{yang13a}, it has 
been shown that a better strategy is to select both $\sigma$ and 
step size $\alpha$ at the same time. This requires representing 
$(\ddot{x}, \ddot{\lambda}, \ddot{s})$ explicitly 
in terms of $\sigma_k$. 
Let $\hat{A}$ be the orthonormal base of the null space of $A$. 
Applying the similar derivation of 
\cite[Lemma 3.3]{yang13a} to (\ref{ddoty}), we have the following explicit
solution for $(\ddot{x}, \ddot{\lambda}, \ddot{s})$ in terms of $\sigma$.
\begin{subequations}
\begin{align}
\ddot{x}=\hat{A} (\hat{A}^{\T}SX^{-1}\hat{A})^{-1}\hat{A}^{\T} X^{-1}(-2 \dot{x} \circ \dot{s} + \sigma \mu e)
:= {p}_x\sigma+ {q}_x, \label{ddx2} \\
\ddot{\lambda}= -( {A}XS^{-1} {A}^{\T})^{-1} A S^{-1} (-2 \dot{x} \circ \dot{s} + \sigma \mu e) 
:= {p}_{\lambda}\sigma + {q}_{\lambda}, \label{ddl2} \\
\ddot{s}= {A}^{\T} ( {A}XS^{-1} {A}^{\T})^{-1} A S^{-1} (-2 \dot{x} \circ \dot{s} + \sigma \mu e) 
:= {p}_s\sigma + {q}_s. \label{dds2}
\end{align}
\label{ddoy2}
\end{subequations}
These relations can easily be reduced to some simpler formulas
as follows: 
\begin{subequations}
\begin{align}
 ( {A}XS^{-1} {A}^{\T})\ddot{\lambda}= A S^{-1} (2 \dot{x} \circ \dot{s} - \sigma \mu e),   \label{ddlc2} \\
\ddot{s}= -{A}^{\T} \ddot{\lambda}, \label{ddsc2} \\
\ddot{x}=S^{-1} (\sigma \mu e -X \ddot{s} -2 \dot{x} \circ \dot{s} ),  \label{ddxc2} 
\end{align}
\label{ddoy3}
\end{subequations}
or their equivalence to be used in Sections 6.6 and 6.8:
\begin{subequations}
\begin{gather}
( {A}XS^{-1} {A}^{\T}) p_{\lambda} = -A S^{-1} \mu e, \hspace{0.1in} 
( {A}XS^{-1} {A}^{\T}) q_{\lambda}= 2 A S^{-1} (\dot{x} \circ \dot{s}),
\\
p_s = -{A}^{\T} p_{\lambda}, \hspace{0.1in} q_s= -{A}^{\T} q_{\lambda},
\\
p_x = S^{-1} \mu e - S^{-1}X p_s,   \hspace{0.1in} q_x=  -S^{-1}X q_s -2 S^{-1}(\dot{x} \circ \dot{s}).
\end{gather}
\label{detail}
\end{subequations}
From (\ref{ddoy2}) and (\ref{ddoty}), we also have
\begin{equation}
\left[
\begin{array}{ccc}
A & 0 & 0\\
0 & A^{\T} & I \\
S & 0 & X
\end{array}
\right]
\left[
\begin{array}{c}
{p}_{x} \\ {p}_{\lambda}  \\  {p}_{s}
\end{array}
\right]
=\left[
\begin{array}{c}
0 \\ 0 \\ \mu e
\end{array}
\right],
\label{barp}
\end{equation}
\begin{equation}
\left[
\begin{array}{ccc}
A & 0 & 0\\
0 & A^{\T} & I \\
S & 0 & X
\end{array}
\right]
\left[
\begin{array}{c}
{q}_{x} \\ {q}_{\lambda}  \\  {q}_{s}
\end{array}
\right]
=\left[
\begin{array}{c}
0 \\ 0 \\ - 2 \dot{x} \circ \dot{s}
\end{array}
\right].
\label{barq}
\end{equation}
From these relations, it is straightforward to derive the following
\begin{lemma}
Let $( {p}_x, {p}_{\lambda}, {p}_{s})$ and $( {q}_{x}, {q}_{\lambda}, {q}_{s})$ be defined in 
(\ref{barp}) and (\ref{barq}); $(\dot{x}, \dot{\lambda}, \dot{s})$ be defined in (\ref{doty}). 
Then, for every iteration $k$ ($k$ is omitted for the sake of notational simplicity),
the following relations hold.
\begin{subequations}
\begin{gather}
q_x^{\T}  {p}_s =0, \hspace{0.1in} q_s^{\T}  {p}_{x} =0, \hspace{0.1in} q_x^{\T} {q}_{s} =0, 
\hspace{0.1in} p_{x}^{\T} p_{s} =0, \label{dota} \\
s^{\T}p_x+x^{\T}p_s =n\mu, \hspace{0.1in} s^{\T}q_x+x^{\T}q_s =-2\dot{x}^{\T}\dot{s}, \label{dotb}  \\
s \circ p_x + x \circ p_s = \mu e, \hspace{0.1in}  s \circ q_x + x \circ q_s = -2 \dot{x} \circ \dot{s} \label{dotc}.
\end{gather}
\end{subequations}
\label{simple1}
\end{lemma}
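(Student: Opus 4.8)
The plan is to read all eight identities straight off the two block systems (\ref{barp}) and (\ref{barq}), which assert that $(p_x,p_\lambda,p_s)$ and $(q_x,q_\lambda,q_s)$ solve linear systems with the \emph{same} coefficient matrix but different right-hand sides. Writing out the three block rows of (\ref{barp}) gives $Ap_x=0$, $A^{\T}p_\lambda+p_s=0$, and $Sp_x+Xp_s=\mu e$; similarly (\ref{barq}) gives $Aq_x=0$, $A^{\T}q_\lambda+q_s=0$, and $Sq_x+Xq_s=-2\dot{x}\circ\dot{s}$. Everything below uses only these six scalar/vector equations.

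First I would dispatch the Hadamard-product identities (\ref{dotc}). Since $X$ and $S$ are diagonal, $Sp_x=s\circ p_x$ and $Xp_s=x\circ p_s$, so the third row of (\ref{barp}) is literally $s\circ p_x+x\circ p_s=\mu e$, and likewise the third row of (\ref{barq}) is $s\circ q_x+x\circ q_s=-2\dot{x}\circ\dot{s}$. Multiplying each of these vector identities on the left by $e^{\T}$ (i.e.\ summing the $n$ components, using $e^{\T}(s\circ p_x)=s^{\T}p_x$ etc.) yields (\ref{dotb}): $s^{\T}p_x+x^{\T}p_s=n\mu$ and $s^{\T}q_x+x^{\T}q_s=-2\dot{x}^{\T}\dot{s}$.

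Next I would establish the four orthogonality relations (\ref{dota}) from the first two block rows. From the second row of (\ref{barp}), $p_s=-A^{\T}p_\lambda$ lies in the range of $A^{\T}$; from the first row, $p_x$ lies in the null space of $A$; hence $p_x^{\T}p_s=-p_x^{\T}A^{\T}p_\lambda=-(Ap_x)^{\T}p_\lambda=0$. The identical computation with $q$ replacing $p$ gives $q_x^{\T}q_s=0$. Mixing the two systems, $q_x^{\T}p_s=-q_x^{\T}A^{\T}p_\lambda=-(Aq_x)^{\T}p_\lambda=0$ since $Aq_x=0$, and $q_s^{\T}p_x=-q_\lambda^{\T}Ap_x=0$ since $Ap_x=0$. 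That accounts for all the claimed identities.

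The argument is entirely routine linear algebra, so I do not anticipate a genuine obstacle; the only thing demanding care is bookkeeping — transcribing the right-hand sides $\mu e$ and $-2\dot{x}\circ\dot{s}$ correctly from (\ref{ddoy2})/(\ref{barp})/(\ref{barq}), and remembering that multiplication by the diagonal matrices $X,S$ is just a Hadamard product. Notably, the proof never uses the explicit formulas (\ref{ddoy2}) involving $\hat A$, nor Assumption~1 beyond what is already baked into the nonsingularity that makes (\ref{barp})--(\ref{barq}) well posed; all that is needed is $Ap_x=0$, $A^{\T}p_\lambda+p_s=0$, $Sp_x+Xp_s=\mu e$ and their $q$-analogues.
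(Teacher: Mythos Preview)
Your proof is correct and is exactly the argument the paper has in mind: the paper does not spell out a proof at all, merely remarking that the relations are ``straightforward to derive'' from (\ref{barp}) and (\ref{barq}), and your derivation is precisely that straightforward derivation. Nothing needs to be changed.
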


\begin{remark}
Under Assumption 3, a simple but very important 
observation from (\ref{py1}) and (\ref{ddoy2}) is that $\dot{x}$,
$\dot{s}$, $\ddot{x}$, and $\ddot{s}$ are all bounded if 
$r_b$ and $r_c$ are bounded, which we will show later.
\end{remark}

To prove the convergence of the first algorithm, 
the following condition is required in every iteration:
\begin{equation}
x^k \circ s^k \ge \theta \mu_k e
\label{cond4cong}
\end{equation}
where $\theta \in (0,1)$ is a constant.

\begin{remark}
Given $(x^k, \lambda^k, s^k, \dot{x}, \dot{\lambda},\dot{s},\ddot{x}, \ddot{\lambda},\ddot{s})$ 
with $(x^{k}, s^k) > (0,0)$, 
our strategy is to use the relations described in Lemmas \ref{simple} and \ref{simple1} to
find some appropriate $\alpha_k \in (0, \pi/2]$ and $\sigma_k \in [\sigma_{\min},1]$ such that 
\begin{itemize}
\item[1.] $\| (r_b^{k+1}, r_c^{k+1}) \|$ and $\mu_{k+1}$ decrease in every iteration and approach to 
zero as $k \rightarrow 0$.
\item[2.] $(x^{k+1}, s^{k+1}) > (0,0)$.
\item[3.]  $x^{k+1} \circ s^{k+1} \ge \theta \mu_{k+1} e$.
\end{itemize}
\label{keyIdea}
\end{remark}

The next lemma to be used in the discussion is taken from 
\cite[Lemma 3.2]{yang16}.
\begin{lemma}
Let $r_b^k = Ax^k-b$, $r_c^k=A^{\T} \lambda^k +s^k -c$, and 
$\nu_k=\prod_{j=0}^{k-1} (1-\sin(\alpha_j))$. Then, the following 
relations hold.
\begin{subequations}
\begin{align}
r_b^k = r_b^{k-1} (1-\sin(\alpha_{k-1})) = \cdots = r_b^0 \prod_{j=0}^{k-1} (1-\sin(\alpha_j))
= r_b^0 \nu_k, \label{a}
\\
r_c^k = r_c^{k-1} (1-\sin(\alpha_{k-1})) = \cdots = r_c^0 \prod_{j=0}^{k-1} (1-\sin(\alpha_j))
= r_c^0 \nu_k. \label{b}
\end{align}
\label{errorUpdate}
\end{subequations}
In a compact form, (\ref{errorUpdate}) can be rewritten as
\begin{equation}
(r_b^k, r_c^k)= \nu_k (r_b^0, r_c^0).
\label{compact}
\end{equation}
\label{basic}
\end{lemma}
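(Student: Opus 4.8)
The plan is to first establish the single-iteration recursion $r_b^{k} = r_b^{k-1}(1-\sin(\alpha_{k-1}))$ and $r_c^{k} = r_c^{k-1}(1-\sin(\alpha_{k-1}))$, and then to obtain the displayed chain of equalities by a straightforward induction on $k$, unwinding the telescoping product and recognizing $\nu_k = \prod_{j=0}^{k-1}(1-\sin(\alpha_j))$.

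For the single-iteration step, recall that the $k$th iterate is produced by advancing along the ellipse of Theorem \ref{ellipseSX} with step $\alpha_{k-1}$ and centering parameter $\sigma_{k-1}$, starting from $(x^{k-1},\lambda^{k-1},s^{k-1})$; that is,
\[
x^{k} = x^{k-1} - \dot x \sin(\alpha_{k-1}) + \ddot x\,(1-\cos(\alpha_{k-1})),
\]
and analogously for $\lambda^{k}$ and $s^{k}$, where $(\dot x,\dot\lambda,\dot s)$ and $(\ddot x,\ddot\lambda,\ddot s)$ solve (\ref{doty})–(\ref{ddoty}) at iteration $k-1$. The top block row of (\ref{doty}) gives $A\dot x = r_b^{k-1}$, while the top block row of (\ref{ddoty}) gives $A\ddot x = 0$. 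Substituting into $r_b^{k} = Ax^{k} - b$ therefore yields
\[
r_b^{k} = (Ax^{k-1}-b) - (A\dot x)\sin(\alpha_{k-1}) + (A\ddot x)(1-\cos(\alpha_{k-1})) = r_b^{k-1}(1-\sin(\alpha_{k-1})).
\]
For the dual residual, the middle block rows of (\ref{doty}) and (\ref{ddoty}) read $A^{\T}\dot\lambda+\dot s = r_c^{k-1}$ and $A^{\T}\ddot\lambda+\ddot s = 0$; the same computation applied to $r_c^{k} = A^{\T}\lambda^{k}+s^{k}-c$ gives $r_c^{k} = r_c^{k-1}(1-\sin(\alpha_{k-1}))$.

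Iterating these two identities from $k$ down to $0$ produces $r_b^{k} = r_b^0\prod_{j=0}^{k-1}(1-\sin(\alpha_j))$ and $r_c^{k} = r_c^0\prod_{j=0}^{k-1}(1-\sin(\alpha_j))$, which is precisely (\ref{errorUpdate}); stacking the two into a single block vector gives the compact form (\ref{compact}). I do not anticipate any genuine obstacle here: the only points requiring care are the sign convention of the arc parametrization and the index bookkeeping — that the step passing from iterate $k-1$ to iterate $k$ is the one called $\alpha_{k-1}$ in the definition of $\nu_k$, and that $\alpha=0$ returns the current iterate, both of which are built into Theorem \ref{ellipseSX}. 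All the substantive content is the linear algebra already packaged in (\ref{doty})–(\ref{ddoty}), namely that the arc's first derivative absorbs the full residual while its second derivative lies in the relevant null space, so the residual contracts by exactly $1-\sin(\alpha_{k-1})$ and is otherwise untouched.
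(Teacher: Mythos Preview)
Your argument is correct and is exactly the natural one: apply $A$ (respectively $A^{\T}\cdot+\cdot$) to the ellipse update of Theorem~\ref{ellipseSX}, use the first two block rows of (\ref{doty}) and (\ref{ddoty}) to see that the first derivative carries the full residual while the second derivative is feasible, and telescope. The paper itself does not give a proof here---it simply cites \cite[Lemma~3.2]{yang16}---and your write-up is precisely the computation that reference contains.
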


Lemma \ref{basic} indicates clearly that: to reduce $(r_b^k, r_c^k)$ fast, 
we should take the largest possible step size $\alpha_k \rightarrow \pi/2$.
If $\nu_k=0$, then $r_b^k=0=r_c^k$. In this case, the problem has 
a feasible interior-point $(x^k, \lambda^k, s^k)$ and can be 
solved by using some efficient feasible interior-point algorithm. 
Therefore, in the remainder of this 
section and the next section, we use the following
\newline
\newline
{\bf Assumption 4:}
\begin{itemize}
\item[] {\it $\nu_k>0$ for $\forall k \ge 0$. }
\end{itemize}

A rescale of $\alpha_k$ similar to the strategy discussed in 
\cite{wright97} is suggested in Section~\ref{implSec}.10, 
which guarantees that Assumptions 3 and 4
hold in every iteration. To examine the decreasing property of 
$\mu(\sigma_k,{\alpha}_k)$, we need the following result.

\begin{lemma}
Let ${\alpha}_k$ be the step length at $k$th iteration for 
$x(\sigma_k, \alpha_k)$, $s(\sigma_k, \alpha_k)$, and 
$\lambda(\sigma_k, \alpha_k)$ be defined in Theorem 
\ref{ellipseSX}. Then, the updated duality measure after an
iteration from $k$ can be expressed as 
\begin{equation}
\mu_{k+1} := \mu(\sigma_k,{\alpha}_k) =
\frac{1}{n} \left[ a_u(\alpha_k)  \sigma_k + b_u(\alpha_k) \right],
\label{usigmaAlpha}
\end{equation}
where 
\[a_u(\alpha_k)=n \mu_k (1-\cos(\alpha_k)) - (\dot{x}^{\T} {p}_s+\dot{s}^{\T} {p}_x) 
\sin(\alpha_k)(1-\cos(\alpha_k))\] 
and 
\[
b_u(\alpha_k)=n \mu_k (1-\sin(\alpha_k)) - [\dot{x}^{\T} \dot{s} (1-\cos(\alpha_k))^2
+(\dot{s}^{\T}q_x + \dot{x}^{\T} q_s) \sin(\alpha_k)(1-\cos(\alpha_k))]
\]
are coefficients which are functions of $\alpha_k$. 
\label{simple2}
\end{lemma}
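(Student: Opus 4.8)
The plan is to compute $\mu_{k+1} = \frac{1}{n}\, x(\sigma_k,\alpha_k)^{\T} s(\sigma_k,\alpha_k)$ directly from the ellipse formulas in Theorem \ref{ellipseSX}, expand, and collect terms by their dependence on $\alpha_k$ and $\sigma_k$. Writing $c_\alpha = 1-\cos(\alpha_k)$ and $s_\alpha = \sin(\alpha_k)$ for brevity, substitute
\[
x(\alpha,\sigma) = x - \dot{x}\,s_\alpha + \ddot{x}\,c_\alpha, \qquad
s(\alpha,\sigma) = s - \dot{s}\,s_\alpha + \ddot{s}\,c_\alpha
\]
into $x(\alpha,\sigma)^{\T} s(\alpha,\sigma)$. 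This produces nine terms; I would group them as (i) the $\alpha$-only, $\sigma$-free part coming from $x,\dot x,\dot s$, (ii) the cross terms pairing $(x,s)$ with the second derivatives $(\ddot x,\ddot s)$, and (iii) the $\ddot x^{\T}\ddot s$ term. The key structural input is that $(\ddot x,\ddot s)$ depends affinely on $\sigma$: by (\ref{ddoy2}), $\ddot x = p_x\sigma + q_x$ and $\ddot s = p_s\sigma + q_s$, so each appearance of $\ddot x$ or $\ddot s$ splits into a $\sigma$-coefficient piece and a constant piece. This is exactly the mechanism that makes $\mu_{k+1}$ affine in $\sigma_k$.

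Next I would invoke the inner-product identities already established. From Lemma \ref{simple}: $s^{\T}\dot x + x^{\T}\dot s = x^{\T}s = n\mu_k$ and $\ddot x^{\T}\ddot s = 0$, so term (iii) vanishes and the $\dot x$-$\dot s$ cross term in (i) contributes $-n\mu_k s_\alpha$, giving the $n\mu_k(1-\sin(\alpha_k))$ leading piece of $b_u$; the $\dot x^{\T}\dot s\, s_\alpha^2$ piece survives but $1-\cos^2 = \sin^2$ lets me fold it — more carefully, the pure $\dot x^{\T}\dot s$ contribution is $\dot x^{\T}\dot s\, s_\alpha^2$, and I expect the stated $\dot x^{\T}\dot s(1-\cos(\alpha_k))^2$ to emerge after also accounting for the $\ddot x,\ddot s$ cross terms via Lemma \ref{simple1}. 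For the cross terms (ii), split $\ddot x = p_x\sigma+q_x$, $\ddot s = p_s\sigma+q_s$ and use from Lemma \ref{simple1}: $s^{\T}p_x + x^{\T}p_s = n\mu$ (this is the source of the $n\mu_k c_\alpha\sigma_k$ term in $a_u$), $s^{\T}q_x + x^{\T}q_s = -2\dot x^{\T}\dot s$, and the orthogonality relations $q_x^{\T}p_s = q_s^{\T}p_x = q_x^{\T}q_s = p_x^{\T}p_s = 0$, which kill the would-be $\sigma^2$ cross terms and most of the rest. The remaining cross terms of the form $\dot x^{\T}\ddot s\, s_\alpha c_\alpha$ and $\dot s^{\T}\ddot x\, s_\alpha c_\alpha$ split, by the same $p/q$ decomposition, into the $(\dot x^{\T}p_s + \dot s^{\T}p_x)$ piece that becomes the $\sigma_k$-dependent part of $a_u$ and the $(\dot s^{\T}q_x + \dot x^{\T}q_s)$ piece that becomes the last bracketed term of $b_u$.

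Finally I would reassemble: collect everything multiplied by $\sigma_k$ into $a_u(\alpha_k)$, everything independent of $\sigma_k$ into $b_u(\alpha_k)$, and check the signs and the $s_\alpha c_\alpha$ versus $c_\alpha^2$ bookkeeping against the claimed formulas, dividing by $n$ at the end. The main obstacle I anticipate is purely the careful sign-and-trig accounting in step (ii): making sure that the four nominally-$\sigma^2$ terms cancel exactly via the $p$-$q$ orthogonalities in Lemma \ref{simple1}, and that the $\dot x^{\T}\dot s\,\sin^2(\alpha_k)$ term combines correctly with the cross-term remnants to yield the clean $\dot x^{\T}\dot s(1-\cos(\alpha_k))^2$ appearing in $b_u$ — the rest is routine substitution.
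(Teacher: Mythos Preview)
Your proposal is correct and follows essentially the same route as the paper: expand $x(\alpha,\sigma)^{\T} s(\alpha,\sigma)$, apply the inner-product identities from Lemmas~\ref{simple} and~\ref{simple1}, use $\sin^2(\alpha_k)-2(1-\cos(\alpha_k))=-(1-\cos(\alpha_k))^2$ to consolidate the $\dot{x}^{\T}\dot{s}$ pieces, and finally split $\ddot{x},\ddot{s}$ into $p$ and $q$ to separate the $\sigma_k$-linear part from the constant part. The only organizational difference is that the paper first uses the aggregate identities $\ddot{x}^{\T}\ddot{s}=0$ and $s^{\T}\ddot{x}+x^{\T}\ddot{s}=-2\dot{x}^{\T}\dot{s}+\sigma_k\mu_k n$ from Lemma~\ref{simple} and only decomposes the residual $\dot{x}^{\T}\ddot{s}+\dot{s}^{\T}\ddot{x}$ term at the end, whereas you decompose into $p,q$ throughout; this is a cosmetic reordering, not a different argument.
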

\begin{proof} Using (\ref{duality}), (\ref{ddoy2}), and Lemmas \ref{simple} and \ref{simple1}, we have
\begin{eqnarray}
& & n \mu (\sigma_k,{\alpha}_k) \nonumber  \\
& = & \left( x^k-\dot{x}\sin(\alpha_k)+\ddot{x}(1-\cos(\alpha_k)) \right)^{\T} 
\left( s^k - \dot{s}\sin(\alpha_k)+\ddot{s}(1-\cos(\alpha_k)) \right)
\nonumber \\
& = & {x^{k^{\T}} s^k}- \left( x^{k^{\T}} \dot{s} + s^{k^{\T}} \dot{x} \right) \sin(\alpha_k)
+ \left( x^{k^{\T}} \ddot{s}  + s^{k^{\T}} \ddot{x} \right)(1-\cos(\alpha_k))
\nonumber \\ 
& & +  { \dot{x}^{\T}\dot{s} } \sin^2(\alpha_k)
   - \left( \dot{x}^{\T} \ddot{s} + \dot{s}^{\T} \ddot{x} \right) 
\sin(\alpha_k) (1-\cos(\alpha_k))
\nonumber \\
& = & n \mu_k (1- \sin(\alpha_k)) + \left( \sigma_k \mu_k n 
-2\dot{x}^{\T}\dot{s} \right)  (1-\cos(\alpha_k)) 
+   \dot{x}^{\T}\dot{s}  \sin^2(\alpha_k) 
\nonumber \\
& &- \left( \dot{x}^{\T} \ddot{s}  
+ \dot{s}^{\T} \ddot{x} \right) \sin(\alpha_k) (1-\cos(\alpha_k))
\nonumber \\ 
& = & n \mu_k (1- \sin(\alpha_k)) +n \sigma_k \mu_k (1-\cos(\alpha_k)) 
-  \dot{x}^{\T}\dot{s} (1-\cos(\alpha_k))^2
\nonumber \\
& & - \left( \dot{x}^{\T} \ddot{s}  
+ \dot{s}^{\T} \ddot{x} \right) \sin(\alpha_k) (1-\cos(\alpha_k))
\label{diffstepsize} \\ 
& = & \left[ n \mu_k (1-\cos(\alpha_k)) 
  - (\dot{x}^{\T} {p}_s+\dot{s}^{\T} {p}_x) 
\sin(\alpha_k)(1-\cos(\alpha_k)) \right] \sigma_k
\nonumber \\ 
& & + 
n \mu_k (1-\sin(\alpha_k)) - [\dot{x}^{\T} \dot{s} (1-\cos(\alpha_k))^2
+(\dot{s}^{\T}q_x + \dot{x}^{\T} q_s) \sin(\alpha_k)(1-\cos(\alpha_k))]
\nonumber \\ 
& := & a_u(\alpha_k)  \sigma_k + b_u(\alpha_k). \nonumber
\end{eqnarray}
This proves the lemma.
\hfill \qed
\end{proof}
The following Lemma is taken from \cite{yang13}.
\begin{lemma} For $\alpha \in [0, \frac{\pi}{2}]$,
\[
\sin^2(\alpha) \ge 1-\cos(\alpha) \ge \frac{1}{2} \sin^2(\alpha).
\]
\label{sincos}
\end{lemma}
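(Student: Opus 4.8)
The plan is to reduce the double inequality to an elementary bound on a single factor via the Pythagorean identity. First I would rewrite the middle-to-left comparison by factoring: since $\sin^2(\alpha) = 1 - \cos^2(\alpha) = (1-\cos(\alpha))(1+\cos(\alpha))$, the inequality $\sin^2(\alpha) \ge 1 - \cos(\alpha)$ is equivalent to $(1-\cos(\alpha))(1+\cos(\alpha)) \ge 1 - \cos(\alpha)$, and the inequality $1 - \cos(\alpha) \ge \frac{1}{2}\sin^2(\alpha)$ is equivalent to $1 - \cos(\alpha) \ge \frac{1}{2}(1-\cos(\alpha))(1+\cos(\alpha))$. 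Both reduce to a claim about the size of the factor $1 + \cos(\alpha)$ once the nonnegative common factor $1 - \cos(\alpha)$ is divided out.

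Next I would record the only fact needed about $\alpha \in [0, \tfrac{\pi}{2}]$: the cosine is decreasing there and takes values in $[0,1]$, so $0 \le \cos(\alpha) \le 1$, and hence $1 \le 1 + \cos(\alpha) \le 2$. Because $1 - \cos(\alpha) \ge 0$ on this interval (indeed $\cos(\alpha) \le 1$ holds for all real $\alpha$), multiplying the chain $1 \le 1 + \cos(\alpha) \le 2$ through by $1 - \cos(\alpha)$ preserves the inequalities and yields
\[
1 - \cos(\alpha) \le (1-\cos(\alpha))(1+\cos(\alpha)) = \sin^2(\alpha) \le 2\bigl(1 - \cos(\alpha)\bigr),
\]
which is exactly the asserted pair of inequalities after dividing the right-hand comparison by $2$.

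There is essentially no obstacle here; the statement is a routine trigonometric estimate. The one point meriting a moment's care is the sign of the factor $1 - \cos(\alpha)$ that is being multiplied through the bounds $1 \le 1 + \cos(\alpha) \le 2$, since multiplying by a negative quantity would reverse the inequalities — but this is immediate from $\cos(\alpha) \le 1$. (An alternative, slightly longer route would be to define $f(\alpha) = \sin^2(\alpha) - (1-\cos(\alpha))$ and $g(\alpha) = (1-\cos(\alpha)) - \tfrac{1}{2}\sin^2(\alpha)$, check $f(0) = g(0) = 0$, and verify $f'(\alpha), g'(\alpha) \ge 0$ on $[0,\tfrac{\pi}{2}]$; I would prefer the factoring argument above as it avoids differentiation entirely.)
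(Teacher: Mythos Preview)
Your proof is correct. The paper does not actually supply its own proof of this lemma; it merely states the result and cites \cite{yang13}, so there is no in-paper argument to compare against. Your factoring via $\sin^2(\alpha)=(1-\cos(\alpha))(1+\cos(\alpha))$ together with $1\le 1+\cos(\alpha)\le 2$ on $[0,\tfrac{\pi}{2}]$ is the natural elementary route and is fully rigorous as written.
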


Since Assumption 3 implies that $\mu_k = x^{k^{\Tr}} s^k/n$ is bounded
below from zero, in view of Lemma \ref{sincos}, it is easy to see from
(\ref{diffstepsize}) the following proposition.
\begin{proposition}
For any fixed $\sigma_k$, 
if $\dot{x}$, $\dot{s}$, $\ddot{x}$, and $\ddot{s}$ are
bounded, then there always exist $\alpha_k \in (0,1)$ 
bounded below from zero such that $\mu(\sigma_k,{\alpha}_k)$ 
decreases in every iteration. Moreover, 
$\mu_{k+1} : = \mu (\sigma_k,{\alpha}_k) \rightarrow \mu_k (1-\sin(\alpha_k))$
as $\alpha_k \rightarrow 0$.
\label{muDec}
\end{proposition}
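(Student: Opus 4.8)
The plan is to argue directly from the identity (\ref{diffstepsize}) obtained inside the proof of Lemma \ref{simple2}, which after subtracting $n\mu_k$ and collecting the terms carrying the factor $1-\cos(\alpha_k)$ reads
\begin{equation*}
n\mu(\sigma_k,\alpha_k) - n\mu_k = -\,n\mu_k\sin(\alpha_k) + \Big[\,n\sigma_k\mu_k - \dot{x}^{\T}\dot{s}\,(1-\cos(\alpha_k)) - (\dot{x}^{\T}\ddot{s}+\dot{s}^{\T}\ddot{x})\sin(\alpha_k)\,\Big](1-\cos(\alpha_k)).
\end{equation*}
The leading term $-n\mu_k\sin(\alpha_k)$ is what drives $\mu$ down; the remainder has to be controlled as a higher-order correction in $\alpha_k$.

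First I would bound the bracketed coefficient uniformly. Since $\sigma_k\in[\sigma_{\min},1]$, $\mu_k$ is bounded above (it is nonincreasing, which is part of what we are proving, and in any case at most $\mu_0$), and $\dot x,\dot s,\ddot x,\ddot s$ are bounded by hypothesis, Cauchy--Schwarz together with $0\le 1-\cos(\alpha_k)\le1$ and $0\le\sin(\alpha_k)\le1$ on $[0,\pi/2]$ yields a constant $C>0$ independent of $k$ that dominates the absolute value of the bracket. Using Lemma \ref{sincos} to replace $1-\cos(\alpha_k)$ by $\sin^2(\alpha_k)$ then gives
\begin{equation*}
n\mu(\sigma_k,\alpha_k) - n\mu_k \le -\,n\mu_k\sin(\alpha_k) + C\sin^2(\alpha_k) = \sin(\alpha_k)\big(C\sin(\alpha_k)-n\mu_k\big),
\end{equation*}
which is strictly negative as soon as $0<\sin(\alpha_k)<n\mu_k/C$. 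By Assumption 3, $\mu_k=x^{k^{\T}}s^k/n$ is bounded below from zero, so $n\mu_k/C$ is bounded below by a positive constant uniform in $k$; hence there is $\bar\alpha\in(0,1]$, independent of $k$, such that every $\alpha_k\in(0,\bar\alpha]$ gives $\mu(\sigma_k,\alpha_k)<\mu_k$. This is the first assertion, together with the claimed uniform positive lower bound on the step length.

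For the ``moreover'' part I would instead keep the factor $1-\sin(\alpha_k)$ intact: from (\ref{diffstepsize}),
\begin{equation*}
n\mu(\sigma_k,\alpha_k) - n\mu_k(1-\sin(\alpha_k)) = n\sigma_k\mu_k(1-\cos(\alpha_k)) - \dot{x}^{\T}\dot{s}\,(1-\cos(\alpha_k))^2 - (\dot{x}^{\T}\ddot{s}+\dot{s}^{\T}\ddot{x})\sin(\alpha_k)(1-\cos(\alpha_k)),
\end{equation*}
and every summand on the right is $O(1-\cos(\alpha_k))=O(\sin^2(\alpha_k))$ by Lemma \ref{sincos} and the boundedness of the derivatives, hence $o(\sin(\alpha_k))$ as $\alpha_k\to0$. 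Dividing by $n$ and letting $\alpha_k\to0$ shows that $\mu(\sigma_k,\alpha_k)$ and $\mu_k(1-\sin(\alpha_k))$ differ by a quantity that vanishes strictly faster than $\sin(\alpha_k)$, i.e. $\mu(\sigma_k,\alpha_k)\to\mu_k(1-\sin(\alpha_k))$ in the asserted sense.

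The one place that needs care is the phrase ``bounded below from zero'': turning it into a genuine bound uniform over all $k$ relies on (i) the uniform lower bound on $\mu_k$ from Assumption 3 and (ii) the uniform boundedness of $\dot x,\dot s,\ddot x,\ddot s$, which is precisely the hypothesis of the proposition and is supplied later in the paper (the Remark following Lemma \ref{simple1}) once $r_b,r_c$ are shown to stay bounded. Given those, the remaining estimates are routine consequences of (\ref{diffstepsize}), Lemma \ref{sincos}, and Cauchy--Schwarz, so I do not expect any further obstacle.
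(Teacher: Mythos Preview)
Your proposal is correct and is essentially the same approach the paper takes: the paper simply points to identity (\ref{diffstepsize}), Lemma \ref{sincos}, and the lower bound on $\mu_k$ from Assumption 3, and declares the proposition ``easy to see,'' while you have spelled out exactly those details. One very minor remark: your uniform constant $C$ on the bracket includes the term $n\sigma_k\mu_k$, so you implicitly use $\mu_k\le\mu_0$; this is harmless (it follows by induction once the decrease is established), and in fact if you divide through by $\mu_k$ before bounding, the dependence on an upper bound for $\mu_k$ disappears entirely, leaving only the lower bound from Assumption~3 and the boundedness of the derivatives, which is cleaner.
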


Now, we show that there exists $\alpha_k$ bounded below from
zero such that the requirement 2 of Remark \ref{keyIdea} holds.
Let $\rho \in (0,1)$ be a constant, and
\begin{equation}
\underline{x}^k = \displaystyle\min_i x^k_i, \hspace{0.1in}
\underline{s}^k = \displaystyle \min_j s^k_j.
\label{ijmath}
\end{equation}
Denote $\phi_k$ and $\psi_k$
such that 
\begin{align}
\phi_k = \min \{ \rho \underline{x}^k , \nu_k \}, \hspace{0.1in}
\psi_k = \min \{ \rho \underline{s}^k, \nu_k \}. 
\label{phikpsi}
\end{align}
It is clear that 
\begin{subequations}
\begin{align}
0< \phi_k e \le \rho x^k, \hspace{0.1in} 0< \phi_k e \le \nu_{k}e, 
\label{phi} \\
0< \psi_k e \le  \rho s^k, \hspace{0.1in}0< \psi_k e \le  \nu_{k}e.
\label{psi}
\end{align}
\end{subequations}

Positivity of $x(\sigma_k, \alpha_k)$ and $s(\sigma_k,\alpha_k)$ is
guaranteed if $(x^0,s^0)>0$ and the following conditions hold.
\begin{eqnarray}
x^{k+1} & = & x(\sigma_k, \alpha_k) = x^k-\dot{x} \sin(\alpha_k) +\ddot{x}(1-\cos(\alpha_k)) \nonumber \\
& = & {p}_x (1-\cos(\alpha_k)) \sigma_k +
[x^k-\dot{x} \sin(\alpha_k)+ q_x (1-\cos(\alpha_k))]
\nonumber \\
& := & a_x(\alpha_k) \sigma_k  +b_x(\alpha_k)   \ge \phi_k e. 
\label{posi1}
\end{eqnarray}
\begin{eqnarray}
s^{k+1} & = & s(\sigma, \alpha_k) = s^k-\dot{s} \sin(\alpha_k) +\ddot{x}(1-\cos(\alpha_k)) \nonumber \\
& = &  {p}_s (1-\cos(\alpha_k)) \sigma_k  +
[s^k-\dot{s} \sin(\alpha_k)+ q_s (1-\cos(\alpha_k))]
\nonumber \\
& := & a_s(\alpha_k) \sigma_k  +b_s(\alpha_k)  \ge \psi_k e. 
\label{posi2}
\end{eqnarray}
If $x^{k+1}= x^k-\dot{x} \sin(\alpha_k) +\ddot{x}(1-\cos(\alpha_k)) \ge \rho x^k$ holds, from (\ref{phi}), we have
$x^{k+1}\ge \phi_k e$. Therefore, inequality (\ref{posi1}) will be 
satisfied if
\begin{equation}
(1-\rho ) x^k -\dot{x} \sin(\alpha_k) +\ddot{x}(1-\cos(\alpha_k)) \ge 0,
\label{posi3}
\end{equation} 
which holds for some $\alpha_k>0$ bounded below from zero
because $(1-\rho ) x^k >0$ is bounded below from zero.
Similarly, from (\ref{psi}), inequality (\ref{posi2}) will be satisfied if 
\begin{equation}
(1-\rho ) s^k -\dot{s} \sin(\alpha_k) +\ddot{s}(1-\cos(\alpha_k)) \ge 0,
\label{posi4}
\end{equation}
which holds for some $\alpha_k>0$ bounded below from zero
because $(1-\rho ) s^k >0$ is bounded below from zero. 
We summarize the above discussion as the following proposition.
\begin{proposition}
There exists $\alpha_k>0$ bounded below from zero such that
$(x^{k+1}, s^{k+1})>0$ for all iteration $k$.
\label{secondBound}
\end{proposition}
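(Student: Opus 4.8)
The plan is to confirm requirement~2 of Remark~\ref{keyIdea} by showing that the scalar sufficient conditions (\ref{posi3}) and (\ref{posi4}) derived in the discussion above hold once $\alpha_k$ is small, and --- crucially --- that the threshold on $\alpha_k$ can be chosen independently of $k$, so that $\alpha_k$ is bounded below from zero.

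First I would collect the boundedness facts. By Lemma~\ref{basic}, $(r_b^k,r_c^k)=\nu_k(r_b^0,r_c^0)$ with $\nu_k\in(0,1]$, so $\|(r_b^k,r_c^k)\|\le\|(r_b^0,r_c^0)\|$ for every $k$; the residuals never exceed their initial size. Under Assumption~3 the diagonal matrices $X^k,S^k$ are bounded away from zero and, since the iterates remain in a bounded region, also bounded above; substituting these bounds into the closed forms (\ref{py1}) and (\ref{ddoy2}) --- and using that the $\sigma$-dependence in (\ref{ddoy2}) is affine with $\sigma$ ranging over the compact interval $[\sigma_{\min},1]$ --- shows, exactly as in the Remark following Lemma~\ref{simple1}, that there is a single constant $C$, independent of $k$ and of the admissible $\sigma$, with $\|\dot x\|,\|\dot s\|,\|\ddot x\|,\|\ddot s\|\le C$; in particular $|\dot x_i|,|\dot s_i|,|\ddot x_i|,|\ddot s_i|\le C$ for all $i$.

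Next I would estimate the left-hand side of (\ref{posi3}) componentwise. For each $i$, using $|\dot x_i|\le C$, $|\ddot x_i|\le C$ together with Lemma~\ref{sincos} (so that $0\le 1-\cos\alpha_k\le\sin^2\alpha_k\le\sin\alpha_k$ on $[0,\pi/2]$), we get
\[
(1-\rho)x_i^k-\dot x_i\sin\alpha_k+\ddot x_i(1-\cos\alpha_k)\ \ge\ (1-\rho)\underline x^k-C\sin\alpha_k-C(1-\cos\alpha_k)\ \ge\ (1-\rho)\underline x^k-2C\sin\alpha_k .
\]
Hence (\ref{posi3}) holds as soon as $\sin\alpha_k\le(1-\rho)\underline x^k/(2C)$, and symmetrically (\ref{posi4}) holds once $\sin\alpha_k\le(1-\rho)\underline s^k/(2C)$. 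By Assumption~3 there are constants $\underline x,\underline s>0$ with $\underline x^k\ge\underline x$ and $\underline s^k\ge\underline s$ for every $k$, so any $\alpha_k\in(0,\pi/2]$ with $\sin\alpha_k\le(1-\rho)\min\{\underline x,\underline s\}/(2C)$ forces $x^{k+1}\ge\rho x^k>0$ and $s^{k+1}\ge\rho s^k>0$; together with (\ref{phi}) and (\ref{psi}) this also gives $x^{k+1}\ge\phi_k e$ and $s^{k+1}\ge\psi_k e$. Setting $\bar\alpha:=\arcsin\!\big((1-\rho)\min\{\underline x,\underline s\}/(2C)\big)>0$ (capped at $\pi/2$), we may take $\alpha_k\ge\bar\alpha$, which is the sought lower bound independent of $k$.

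The main obstacle is precisely this uniformity. A purely per-iteration statement --- that for each fixed $k$ some positive $\alpha_k$ works --- is immediate from continuity of $\sin$ and $\cos$ at $0$ and positivity of $(1-\rho)x^k$, and indeed (\ref{posi3}) and (\ref{posi4}) were already observed to hold ``for some $\alpha_k>0$''. What needs care is a bound on $\alpha_k$ that does not shrink to $0$ as $k\to\infty$: this rests on (i) the uniform-in-$k$ constant $C$ bounding $\|\dot x\|,\|\dot s\|,\|\ddot x\|,\|\ddot s\|$ and (ii) a uniform-in-$k$ lower bound on $\underline x^k,\underline s^k$. Item~(ii) is Assumption~3 itself; item~(i) combines Assumption~3 with the residual bound of Lemma~\ref{basic} and with boundedness of the iterate sequence, and it is the verification of this last ingredient --- that the generated points stay in a bounded set --- that is the least automatic step and the one I would treat most carefully.
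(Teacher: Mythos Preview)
Your argument is essentially the paper's own: it expands the discussion immediately preceding Proposition~\ref{secondBound} (Assumption~3 for the uniform lower bounds on $x^k,s^k$, the Remark after Lemma~\ref{simple1} together with Lemma~\ref{basic} for boundedness of $\dot x,\dot s,\ddot x,\ddot s$), and you are right to flag that an upper bound on the iterates is the delicate ingredient the paper also leaves implicit. One wording slip: the condition you derive is $\sin\alpha_k\le$ threshold, i.e.\ $\alpha_k\le\bar\alpha$, so the conclusion should be ``we may take $\alpha_k=\bar\alpha$'' (and then $\bar\alpha$ is the uniform lower bound), not ``$\alpha_k\ge\bar\alpha$''.
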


The next proposition addresses requirement 3 of Remark \ref{keyIdea}.
\begin{proposition}
There exist $\alpha_k$ bounded below 
from zero for all $k$ such at (\ref{cond4cong}) holds. 
\label{thirdBound}
\end{proposition}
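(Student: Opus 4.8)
The plan is to prove the inductive step behind (\ref{cond4cong}): assuming $x^k\circ s^k\ge\theta\mu_k e$ holds at iteration $k$, I will produce a threshold $\bar\alpha\in(0,\pi/2]$, independent of $k$, such that every $\alpha_k\in(0,\bar\alpha]$ forces $x^{k+1}\circ s^{k+1}\ge\theta\mu_{k+1}e$; the base case $x^0\circ s^0\ge\theta\mu_0 e$ is guaranteed by the initialization. First I would expand the $i$th component $x_i^{k+1}s_i^{k+1}$ from the ellipse of Theorem \ref{ellipseSX} with $\alpha=\alpha_k$, and reduce the cross terms componentwise. By Lemma \ref{simple}, $s_i^k\dot x_i+x_i^k\dot s_i=x_i^k s_i^k$; and writing $\ddot x=p_x\sigma_k+q_x$, $\ddot s=p_s\sigma_k+q_s$ and using the componentwise identities $s\circ p_x+x\circ p_s=\mu e$ and $s\circ q_x+x\circ q_s=-2\dot x\circ\dot s$ from Lemma \ref{simple1}, one gets $s_i^k\ddot x_i+x_i^k\ddot s_i=\sigma_k\mu_k-2\dot x_i\dot s_i$. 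Using in addition the exact identity $\sin^2\alpha_k=(1-\cos\alpha_k)(1+\cos\alpha_k)$ and writing $c_k:=1-\cos\alpha_k$, the $\dot x_i\dot s_i\,c_k$ contributions cancel and
\[
x_i^{k+1}s_i^{k+1}=x_i^k s_i^k(1-\sin\alpha_k)+\sigma_k\mu_k c_k+(\ddot x_i\ddot s_i-\dot x_i\dot s_i)c_k^2-(\dot x_i\ddot s_i+\dot s_i\ddot x_i)\sin\alpha_k\,c_k .
\]

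Next I would subtract $\theta\mu_{k+1}$, taking $\mu_{k+1}$ from (\ref{diffstepsize}) in Lemma \ref{simple2} (which already carries only $c_k$-, $c_k^2$- and $\sin\alpha_k c_k$-order terms). The $c_k$-order part collapses to $(1-\theta)\sigma_k\mu_k c_k$, giving
\[
x_i^{k+1}s_i^{k+1}-\theta\mu_{k+1}=(x_i^k s_i^k-\theta\mu_k)(1-\sin\alpha_k)+(1-\theta)\sigma_k\mu_k c_k+R_{k,i},
\]
where $R_{k,i}=\bigl[(\ddot x_i\ddot s_i-\dot x_i\dot s_i)+\tfrac{\theta}{n}\dot x^{\T}\dot s\bigr]c_k^2+\bigl[\tfrac{\theta}{n}(\dot x^{\T}\ddot s+\dot s^{\T}\ddot x)-(\dot x_i\ddot s_i+\dot s_i\ddot x_i)\bigr]\sin\alpha_k c_k$. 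By the Remark following Lemma \ref{simple1}, together with Lemma \ref{basic} (which yields $\|(r_b^k,r_c^k)\|\le\|(r_b^0,r_c^0)\|$ since each $1-\sin\alpha_j\le1$) and Assumption 3, the vectors $\dot x,\dot s,\ddot x,\ddot s$ are bounded uniformly in $k$, so $|R_{k,i}|\le c_k\,(C_1 c_k+C_2\sin\alpha_k)$ for constants $C_1,C_2$ independent of $k$ and $i$.

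Then the lower bound is immediate. The first term is $\ge0$ by the induction hypothesis at $k$ and $\sin\alpha_k\le1$ for $\alpha_k\in(0,\pi/2]$. By Assumption 3 there is a constant $\underline\mu>0$ with $\mu_k\ge\underline\mu$ for all $k$, and $\sigma_k\ge\sigma_{\min}>0$, so the second term is $\ge(1-\theta)\sigma_{\min}\underline\mu\,c_k$. Hence
\[
x_i^{k+1}s_i^{k+1}-\theta\mu_{k+1}\ge c_k\bigl[(1-\theta)\sigma_{\min}\underline\mu-C_1 c_k-C_2\sin\alpha_k\bigr] .
\]
Since $c_k\to0$ and $\sin\alpha_k\to0$ as $\alpha_k\to0^+$, there is a $\bar\alpha\in(0,\pi/2]$ depending only on $\theta,\sigma_{\min},\underline\mu,C_1,C_2$ — hence not on $k$ — such that the bracket is $\ge\tfrac12(1-\theta)\sigma_{\min}\underline\mu>0$ for all $\alpha_k\in(0,\bar\alpha]$. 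Then $x_i^{k+1}s_i^{k+1}\ge\theta\mu_{k+1}$ for every $i$, i.e.\ (\ref{cond4cong}) holds at $k+1$; so any $\alpha_k\in(0,\bar\alpha]$ works, which is the uniform lower bound claimed. Intersecting $(0,\bar\alpha]$ with the threshold intervals from Propositions \ref{muDec} and \ref{secondBound} keeps all three requirements of Remark \ref{keyIdea} simultaneously.

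The step I expect to be the obstacle is the exact cancellation. The quantity $\dot x_i\dot s_i$ is $O(1)$ and, unlike the feasible arc-search analysis where $\dot x^{\T}\dot s=0$, it does not vanish here, so one cannot afford to lose it into an $O(c_k)$ error term; the two-sided estimate $\tfrac12\sin^2\alpha\le 1-\cos\alpha\le\sin^2\alpha$ of Lemma \ref{sincos} is too crude on the boundary $x_i^k s_i^k=\theta\mu_k$. It is the exact identity $\sin^2\alpha=(1-\cos\alpha)(1+\cos\alpha)$ that makes the $c_k$-order $\dot x_i\dot s_i$ terms cancel and leaves the clean positive term $(1-\theta)\sigma_k\mu_k c_k$. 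The remaining, purely bookkeeping, difficulty is to check that every leftover term is genuinely of order $c_k^2$ or $\sin\alpha_k c_k$ with a coefficient bounded uniformly in $k$, which is exactly where the uniform boundedness of $\dot x,\dot s,\ddot x,\ddot s$ (Remark after Lemma \ref{simple1}, via Lemma \ref{basic} and Assumption 3) enters.
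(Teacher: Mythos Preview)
Your proof is correct and follows essentially the same route as the paper: expand $x_i^{k+1}s_i^{k+1}$ from the ellipse formula, reduce the cross terms via Lemmas \ref{simple} and \ref{simple1}, use the exact identity $\sin^2\alpha-2(1-\cos\alpha)=-(1-\cos\alpha)^2$ to collapse the $\dot x_i\dot s_i$ contributions, and then invoke Assumption 3 (both for $\mu_k\ge\underline\mu>0$ and for uniform boundedness of the derivatives) together with Lemma \ref{sincos} to absorb the higher-order remainder. In fact you carry the argument one step further than the paper's proof of Proposition \ref{thirdBound}, which stops at the bound (\ref{thirdBound1}) and asserts ``the result follows'': you explicitly subtract $\theta\mu_{k+1}$ using (\ref{diffstepsize}) and isolate the clean positive term $(1-\theta)\sigma_k\mu_k c_k$, which is precisely the computation the paper defers to Lemma \ref{barrier} in Section 4.
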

\begin{proof}
From (\ref{posi1}) and (\ref{posi2}), since $ x_i^k s_i^k \ge \theta \mu_k$,
we have
\begin{eqnarray}
& & x_i^{k+1} s_i^{k+1} \nonumber \\
& = & [x_i^k -\dot{x}_i \sin(\alpha_k) +\ddot{x}_i (1-\cos(\alpha_k))]
[s_i^k -\dot{s}_i \sin(\alpha_k) +\ddot{s}_i (1-\cos(\alpha_k))]  \nonumber \\
& = &  x_i^k s_i^k -[\dot{x}_i^k s_i^k+x_i^k \dot{s}_i^k]\sin(\alpha_k) 
+ [\ddot{x}_i^k s_i^k+x_i^k \ddot{s}_i^k] (1-\cos(\alpha_k))
  \nonumber \\ 
&  & + \dot{x}_i^k \dot{s}_i^k \sin^2(\alpha_k) 
- [\ddot{x}_i^k \dot{s}_i^k+\dot{x}_i^k \ddot{s}_i^k] \sin(\alpha_k) 
(1-\cos(\alpha_k))+ \ddot{x}_i^k \ddot{s}_i^k(1-\cos(\alpha_k))^2
  \nonumber \\ 
& = &   x_i^k s_i^k (1 -\sin(\alpha_k) ) + \dot{x}_i^k \dot{s}_i^k [\sin^2(\alpha_k) - 2(1-\cos(\alpha_k))]
+\sigma_k \mu_k (1-\cos(\alpha_k)) 
  \nonumber \\ 
&  & - [\ddot{x}_i^k \dot{s}_i^k+\dot{x}_i^k \ddot{s}_i^k] \sin(\alpha_k) 
(1-\cos(\alpha_k))+ \ddot{x}_i^k \ddot{s}_i^k(1-\cos(\alpha_k))^2
  \nonumber \\ 
& = &   x_i^k s_i^k (1 -\sin(\alpha_k) ) - \dot{x}_i^k \dot{s}_i^k (1-\cos(\alpha_k))^2 +\sigma_k \mu_k (1-\cos(\alpha_k)) 
  \nonumber \\ 
&  & - [\ddot{x}_i^k \dot{s}_i^k+\dot{x}_i^k \ddot{s}_i^k] \sin(\alpha_k) 
(1-\cos(\alpha_k))+ \ddot{x}_i^k \ddot{s}_i^k(1-\cos(\alpha_k))^2
  \nonumber \\ 
& \ge &   \theta \mu_k (1 -\sin(\alpha_k) ) 
+\sigma_k \mu_k (1-\cos(\alpha_k)) 
  \nonumber \\ 
&  & - [\ddot{x}_i^k \dot{s}_i^k+\dot{x}_i^k \ddot{s}_i^k] \sin(\alpha_k) 
(1-\cos(\alpha_k))+ (\ddot{x}_i^k \ddot{s}_i^k-\dot{x}_i^k \dot{s}_i^k)
(1-\cos(\alpha_k))^2.
\label{thirdBound1}
\end{eqnarray}
Since Assumption 3 implies (a) $\mu_k$ is bounded below from zero, 
and (b) $\dot{x}$, $\dot{s}$, $\ddot{x}$, and $\ddot{s}$ are all 
bounded, the result follows from Lemma \ref{sincos}.
\hfill \qed
\end{proof}

Let $\sigma_{\min}$ and $\sigma_{\max}$ be constants, and
$0 < \sigma_{\min} < \sigma_{\max} \le 1$. From Propositions 3.1
and 3.2, and Lemma 3.3, we conclude:

\begin{proposition}
For any fixed $\sigma_k$ such that
$\sigma_{\min} \le \sigma_k \le \sigma_{\max}$, 
there is a constant $\delta>0$ related to lower bound of 
$\alpha_k$ such that 
(a) $r_b^{k} -r_b^{k+1} \ge \delta$,
(b) $r_c^{k} -r_c^{k+1} \ge \delta$, 
(c) $\mu_k -\mu_{k+1} \ge \delta$, and
(d) $(x^{k+1},s^{k+1})>0$.
\end{proposition}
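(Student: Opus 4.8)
The plan is to read this Proposition as the uniform-in-$k$ consolidation of Propositions~\ref{muDec}, \ref{secondBound}, and \ref{thirdBound} together with Lemma~\ref{sincos}: each of those results supplies a positive lower bound on the step length needed for one of the three requirements of Remark~\ref{keyIdea}, and the claim here is that a single such bound $\bar{\alpha}>0$, and the decreases it yields, can be chosen independently of the iteration count $k$, with $\delta$ built from $\sin(\bar{\alpha})$. The first and essential step is therefore to establish that $\dot{x},\dot{s},\ddot{x},\ddot{s}$ are uniformly bounded over all $k$ and that $\mu_k$ has a uniform positive lower bound. I would argue this inductively over $k$: by Lemma~\ref{basic}, $(r_b^k,r_c^k)=\nu_k(r_b^0,r_c^0)$ with $\nu_k\in(0,1)$ under Assumption~4, so $\|(r_b^k,r_c^k)\|\le\|(r_b^0,r_c^0)\|$ for every $k$; since the step lengths are chosen to keep $\mu$ nonincreasing we have $\mu_k\le\mu_0$, and this together with Assumption~3 (so $\underline{x}^k,\underline{s}^k$ stay bounded away from $0$) and $x_i^ks_i^k\le n\mu_k\le n\mu_0$ confines $x^k,s^k$ to a fixed compact set and gives $\mu_k\ge\underline{\mu}$ for a fixed $\underline{\mu}>0$. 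Substituting these uniform bounds into the closed forms (\ref{py1}) and (\ref{ddoy2}) then yields, as anticipated in the remark following Lemma~\ref{simple1}, a fixed bound on $\dot{x},\dot{s},\ddot{x},\ddot{s}$.

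With these uniform bounds in force, I would run the three propositions at a generic iteration $k$. Proposition~\ref{muDec}, through identity (\ref{diffstepsize}) and Lemma~\ref{sincos}, gives an $\bar{\alpha}_1>0$, independent of $k$, such that $\mu(\sigma_k,\alpha_k)<\mu_k$ for $0<\alpha_k\le\bar{\alpha}_1$; Proposition~\ref{secondBound} gives an $\bar{\alpha}_2>0$ such that $(x^{k+1},s^{k+1})>0$ for $0<\alpha_k\le\bar{\alpha}_2$; and Proposition~\ref{thirdBound}, through (\ref{thirdBound1}) and Lemma~\ref{sincos}, gives an $\bar{\alpha}_3>0$ such that the neighborhood condition (\ref{cond4cong}) is preserved for $0<\alpha_k\le\bar{\alpha}_3$. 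Put $\bar{\alpha}=\min\{\bar{\alpha}_1,\bar{\alpha}_2,\bar{\alpha}_3\}>0$; then the step actually taken by the algorithm lies in $[\bar{\alpha},\bar{\alpha}_1]$ and in particular $\sin(\alpha_k)\ge\sin(\bar{\alpha})>0$ for all $k$, which also closes the induction of the previous paragraph.

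The four conclusions now drop out. Part (d) is Proposition~\ref{secondBound}. For (c), rearranging (\ref{diffstepsize}) gives $n(\mu_k-\mu_{k+1})=n\mu_k\sin(\alpha_k)-[\,n\sigma_k\mu_k(1-\cos(\alpha_k))-\dot{x}^{\T}\dot{s}(1-\cos(\alpha_k))^2-(\dot{x}^{\T}\ddot{s}+\dot{s}^{\T}\ddot{x})\sin(\alpha_k)(1-\cos(\alpha_k))\,]$; by Lemma~\ref{sincos} the bracketed terms are of order $\sin^2(\alpha_k)$ with coefficients fixed by the uniform bounds, while the leading term $n\mu_k\sin(\alpha_k)\ge n\underline{\mu}\sin(\bar{\alpha})$ dominates over the admissible range of $\alpha_k$, so $\mu_k-\mu_{k+1}\ge\delta$ for a fixed $\delta>0$ — this is Proposition~\ref{muDec} made quantitative. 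For (a) and (b), Lemma~\ref{basic} gives $\|r_b^k\|-\|r_b^{k+1}\|=\sin(\alpha_k)\|r_b^k\|\ge\sin(\bar{\alpha})\|r_b^k\|$ and likewise for $r_c^k$, a fixed reduction factor $1-\sin(\bar{\alpha})<1$ (equivalently a decrease of at least $\delta$ while the residuals are still nonzero; once $\nu_k$ hits $0$ one has $r_b^k=r_c^k=0$ and is in the feasible regime set aside by Assumption~4).

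The one genuinely delicate point is this uniformity in $k$: Propositions~\ref{muDec}, \ref{secondBound}, and \ref{thirdBound} individually state only that the admissible step length is ``bounded below from zero,'' and the substance of the present Proposition is that one $\bar{\alpha}>0$ works simultaneously for every iteration. As the first paragraph indicates, this reduces to the uniform boundedness of $\dot{x},\dot{s},\ddot{x},\ddot{s}$ and the uniform lower bound $\mu_k\ge\underline{\mu}$ (hence, via the neighborhood condition, on $\underline{x}^k$ and $\underline{s}^k$), which rests only on $\nu_k\le1$ (Lemma~\ref{basic}), $\mu_k\le\mu_0$, and Assumptions~2--4; everything after that is the routine trigonometric bookkeeping sketched above.
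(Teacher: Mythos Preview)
Your proposal is essentially the same consolidation the paper performs, but far more carefully worked out: the paper's entire argument for this proposition is the single sentence ``From Propositions \ref{muDec} and \ref{secondBound}, and Lemma \ref{basic}, we conclude,'' with no further detail. You invoke the same ingredients (plus Proposition \ref{thirdBound}, which the paper omits here since the neighborhood condition is not part of the statement, but which does no harm and is needed for Algorithm~\ref{mainAlgo} anyway).

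Two brief comments. First, your emphasis on uniformity in $k$ is the substantive addition, and you correctly locate it in the boundedness of $\dot{x},\dot{s},\ddot{x},\ddot{s}$ and the positive lower bound on $\mu_k$; the paper simply does not address this, relying tacitly on Assumption~3 (``until the program is terminated'') to make everything finite. Second, you rightly flag that parts (a) and (b) as literally stated cannot hold with a single $\delta>0$ once $\|r_b^k\|$ drops below $\delta$; your reading as a fixed contraction factor $1-\sin(\bar{\alpha})$ is the only sensible interpretation, and the paper never confronts this either. In short: same approach, your execution is more rigorous than the paper's.
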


\section{Algorithm 1}

This algorithm considers the search in the neighborhood 
(\ref{infeasible1}). Based on the discussion in the previous 
section, we will show in this section that the following 
arc-search infeasible interior-point algorithm is well-defined and
converges in polynomial iterations.

\begin{algorithm} {\ } \\ 
Data: $A$, $b$, $c$.  \hspace{0.1in} {\ } \\
Parameter: $\epsilon \in (0,1)$, $\sigma_{\min} \in (0,1)$, 
$\sigma_{max} \in (0,1)$, $\theta \in (0,1)$,
and $\rho \in (0,1)$. {\ } \\
Initial point: ${\lambda}^0=0$ and $(x^0, s^0)>0$.
\newline
{\bf for} iteration $k=0,1,2,\ldots$
\begin{itemize}
\item[] Step 0: If ${\| r_b^0 \|} \le {\epsilon}$, ${\| r_c^0 \|} \le {\epsilon}$, 
and $\mu_k \le \epsilon$, stop.
\item[] Step 1: Calculate ${\mu}_k$, $r_b^k$, $r_c^k$, $\dot{\lambda}$, $\dot{s}$, 
$\dot{x}$, $p_x^k$, $p_{\lambda}^k$, $p_s^k$, $q_x^k$, $q_{\lambda}^k$, and $q_s^k$. 
\item[] Step 2: Find some appropriate $\alpha_k \in (0,\pi/2]$
and $\sigma_k \in [\sigma_{\min},\sigma_{\max}]$ to satisfy 
\begin{subequations}
\begin{gather}
x(\sigma_k, \alpha_k) \ge \phi_k e, \hspace{0.05in} 
s(\sigma_k, \alpha_k) \ge \psi_k e, \hspace{0.05in}  
\mu_k > \mu(\sigma_k, \alpha_k),    \hspace{0.05in}
x(\sigma_k, \alpha_k) s(\sigma_k, \alpha_k) \ge \theta \mu(\sigma_k, \alpha_k)e.  
 \nonumber 
\end{gather}
\end{subequations}
\item[] Step 3: Set $(x^{k+1}, \lambda^{k+1}, s^{k+1})
=(x(\sigma_k, \alpha_k), \lambda(\sigma_k, \alpha_k), s(\sigma_k, \alpha_k))$ and 
$\mu_{k+1}=\mu(\sigma_k, \alpha_k)$.
\item[] Step 4: Set $k+1 \rightarrow k$. Go back to Step 1.
\end{itemize}
{\bf end (for)} 
\hfill \qed
\label{mainAlgo}
\end{algorithm}

The algorithm is well defined because of the three propositions
in the previous section, i.e., there is a series of $\alpha_k$
bounded below from zero such that all conditions in Step 2 hold. 
Therefore, a constant $\rho \in (0,1)$ satisfying 
$\rho \ge (1- \sin(\alpha_k))$ does exist for all $k \ge 0$.
Denote
\begin{equation}
\beta_k = \frac{\min \{ \underline{x}^k , \underline{s}^k \}}{\nu_k} \ge 0,
\label{betak}
\end{equation}
and 
\begin{equation}
\beta = \displaystyle\inf_{k} \{ \beta_k \} \ge 0.
\label{beta}
\end{equation}
The next lemma shows that $\beta$ is bounded below from zero.
\begin{lemma}
Assuming that $\rho \in (0,1)$ is a constant and for all 
$k \ge 0$, $\rho \ge (1- \sin(\alpha_k))$. Then, we have 
$\beta \ge \min \{ \underline{x}^0 , \underline{s}^0, 1 \}$.
\label{betaGt0}
\end{lemma}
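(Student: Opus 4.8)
The plan is to prove the bound $\beta \ge \min\{\underline{x}^0,\underline{s}^0,1\}$ by induction on $k$, showing that for every $k$ the stronger componentwise estimate $\min\{\underline{x}^k,\underline{s}^k\} \ge \nu_k\,\min\{\underline{x}^0,\underline{s}^0,1\}$ holds; dividing by $\nu_k>0$ (Assumption 4) then gives $\beta_k \ge \min\{\underline{x}^0,\underline{s}^0,1\}$ for every $k$, and taking the infimum over $k$ yields the claim. The base case $k=0$ is immediate since $\nu_0=1$ (empty product), so $\beta_0 = \min\{\underline{x}^0,\underline{s}^0\} \ge \min\{\underline{x}^0,\underline{s}^0,1\}$.

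For the inductive step, suppose $\min\{\underline{x}^k,\underline{s}^k\} \ge \nu_k\,\min\{\underline{x}^0,\underline{s}^0,1\}$. The key is to combine two facts. First, from the definition of $\phi_k,\psi_k$ in (\ref{phikpsi}) together with (\ref{phi})--(\ref{psi}) and the acceptance test in Step 2 of Algorithm \ref{mainAlgo}, each component of $x^{k+1}$ satisfies $x^{k+1}_i \ge \phi_k = \min\{\rho\underline{x}^k,\nu_k\}$, and similarly $s^{k+1}_j \ge \psi_k = \min\{\rho\underline{s}^k,\nu_k\}$; hence
\[
\min\{\underline{x}^{k+1},\underline{s}^{k+1}\} \ge \min\{\rho\underline{x}^k,\rho\underline{s}^k,\nu_k\}
= \min\{\rho\min\{\underline{x}^k,\underline{s}^k\},\,\nu_k\}.
\]
Second, by Lemma \ref{basic} we have $\nu_{k+1} = \nu_k(1-\sin(\alpha_k))$, and by hypothesis $\rho \ge 1-\sin(\alpha_k)$, so $\nu_k \ge \nu_{k+1}$ and also $\rho\nu_k \ge \nu_{k+1}$. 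Now apply the inductive hypothesis to the first term inside the last minimum: $\rho\min\{\underline{x}^k,\underline{s}^k\} \ge \rho\nu_k\min\{\underline{x}^0,\underline{s}^0,1\} \ge \nu_{k+1}\min\{\underline{x}^0,\underline{s}^0,1\}$. For the second term, since $\min\{\underline{x}^0,\underline{s}^0,1\}\le 1$ we get $\nu_k \ge \nu_{k+1} \ge \nu_{k+1}\min\{\underline{x}^0,\underline{s}^0,1\}$. Taking the minimum of the two lower bounds gives $\min\{\underline{x}^{k+1},\underline{s}^{k+1}\} \ge \nu_{k+1}\min\{\underline{x}^0,\underline{s}^0,1\}$, completing the induction.

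The main obstacle—really the only subtle point—is ensuring the two-sided interaction between $\rho$ and the factors $(1-\sin(\alpha_k))$ is handled correctly: one needs $\rho \ge 1-\sin(\alpha_k)$ (the lemma's hypothesis, which the preceding paragraph justifies via the propositions guaranteeing $\alpha_k$ bounded below from zero) precisely so that the geometric decay $\nu_{k+1} = \nu_k(1-\sin(\alpha_k))$ does not outpace the contraction factor $\rho$ coming from the positivity safeguard in Step 2. Once this monotonicity $\rho\nu_k \ge \nu_{k+1}$ is in place, the induction closes cleanly, and the final step is simply $\beta = \inf_k \beta_k \ge \min\{\underline{x}^0,\underline{s}^0,1\}$, which is automatically nonnegative as already noted in (\ref{beta}).
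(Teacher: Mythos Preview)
Your proof is correct and follows essentially the same inductive strategy as the paper: both arguments use the Step~2 guarantees $x^{k+1}\ge\phi_k e$, $s^{k+1}\ge\psi_k e$ together with the hypothesis $\rho\ge 1-\sin(\alpha_k)$ to propagate the bound from $k$ to $k+1$. Your version is slightly more streamlined in that you bound $\min\{\underline{x}^{k+1},\underline{s}^{k+1}\}\ge\min\{\rho\underline{x}^k,\rho\underline{s}^k,\nu_k\}$ in one line and close the induction uniformly, whereas the paper splits into three explicit cases depending on which of $\rho\underline{x}^k$, $\rho\underline{s}^k$, $\nu_k$ is active; the underlying content is identical.
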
 
\begin{proof}
For $k=0$, $(\underline{x}^0,\underline{s}^0)>0$, and $\nu_0 =1$, therefore, 
$\beta_0 \ge \min \{ \underline{x}^0, \underline{s}^0, 1 \}$ holds. Assuming that 
$\beta_k \ge \min \{ \underline{x}^0, \underline{s}^0, 1 \}$ holds for $k>0$, we
would like to show that $\beta_{k+1} =\min \{ \beta_k, 1 \}$ 
holds for $k+1$. We divide our discussion into three cases.
\begin{itemize}
\item[] Case 1: $\min \{ \underline{x}^{k+1}, \underline{s}^{k+1} \}
= \underline{x}^{k+1} \ge \rho \underline{x}^{k} \ge \underline{x}^{k} 
(1- \sin(\alpha_k))$. Then we have
\[
\beta_{k+1} = \frac{\min \{ \underline{x}^{k+1}, \underline{s}^{k+1} \}}{\nu_{k+1}} 
\ge \frac{ \underline{x}^{k} 
(1- \sin(\alpha_k))}{\nu_k (1- \sin(\alpha_k))} \ge \beta_k. 
\]
\item[] Case 2: $\min \{ \underline{x}^{k+1}, \underline{s}^{k+1} \}
= \underline{s}^{k+1} \ge \rho \underline{s}^{k} \ge \underline{s}^{k} 
(1- \sin(\alpha_k))$. Then we have
\[
\beta_{k+1} = \frac{\min \{ \underline{x}^{k+1}, \underline{s}^{k+1} \}}{\nu_{k+1}} 
\ge \frac{ \underline{s}^{k} 
(1- \sin(\alpha_k))}{\nu_k (1- \sin(\alpha_k))} \ge \beta_k. 
\]
\item[] Case 3: $\min \{ \underline{x}^{k+1}, \underline{s}^{k+1} \}
\ge \nu_k$. Then we have
\[
\beta_{k+1} = \frac{\min \{ \underline{x}^{k+1}, \underline{s}^{k+1} \}}{\nu_{k+1}} 
\ge \frac{ \nu_k}{\nu_k (1- \sin(\alpha_k))} \ge 1. 
\]
\end{itemize}
Adjoining these cases, we conclude $\beta \ge \min \{ \beta_0, 1 \}$.
\hfill
\qed
\end{proof}

The main purpose of the rest section is to establish a polynomial 
bound for this algorithm. In view of (\ref{neiborArc}) and 
(\ref{infeasible1}), to show the convergence of Algorithm 
\ref{mainAlgo}, we need to show that there is a sequence of
${\alpha}_k \in (0,\pi/2]$ with $\sin(\alpha_k)$ being
bounded by a polynomial of $n$
and $\sigma_k \in [\sigma_{\min},\sigma_{max}]$ such that (a) 
$r_b^k \rightarrow 0$, $r_c^k \rightarrow 0$ (which has been 
shown in Lemma \ref{basic}), and $\mu_k \rightarrow 0$, 
(b) $(x^k,s^k)>0$  for all $k \ge 0$, and
(c) $x(\sigma_k, \alpha_k) s(\sigma_k, \alpha_k) \ge 
\theta \mu(\sigma_k, \alpha_k)e$ for all $k \ge 0$ . 

Although our strategy is similar to the one used by Kojima 
\cite{kojima96}, Kojima, Megiddo, and Mizuno \cite{kmm93}, 
Wright \cite{wright97}, and Zhang \cite{zhang94}, our convergence 
result does not depend on some unrealistic and unnecessary 
restrictions assumed in those papers. We start with a simple 
but important observation. We will use the definition 
$D= X^{\frac{1}{2}} S^{-\frac{1}{2}}=\diag(D_{ii})$. 

\begin{lemma} 
For Algorithm \ref{mainAlgo}, there is a constant $C_1$ independent 
of $n$ such that for $\forall i \in \{ 1, \ldots, n \}$
\begin{equation}
(D^{k}_{ii})^{-1}  \nu_k = \nu_k \sqrt{\frac{s_i^k}{x_i^k}}
\le C_1 \sqrt{n \mu_k}, \hspace{0.1in}
D_{ii}^k  \nu_k = \nu_k \sqrt{\frac{x_i^k}{s_i^k}}
\le C_1 \sqrt{n \mu_k}.
\end{equation}
\label{cond1}
\end{lemma}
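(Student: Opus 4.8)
The plan is to bound $\nu_k\sqrt{s_i^k/x_i^k}$ and its reciprocal $\nu_k\sqrt{x_i^k/s_i^k}$ in terms of $\sqrt{n\mu_k}$ using the lower bounds already established. First I would use the neighborhood condition (\ref{cond4cong}), $x_i^k s_i^k \ge \theta\mu_k$, which holds for every iterate by Proposition \ref{thirdBound}. This immediately gives $s_i^k \ge \theta\mu_k/x_i^k$, hence $\sqrt{s_i^k/x_i^k} \ge \sqrt{\theta\mu_k}/x_i^k$; turning this around, $\sqrt{s_i^k/x_i^k} \le \sqrt{x_i^k s_i^k}/\, x_i^k \cdot \sqrt{x_i^k/x_i^k}$ — more usefully, from $x_i^k s_i^k \le x^{k\Tr}s^k = n\mu_k$ we get both $x_i^k \le n\mu_k/s_i^k$ and $s_i^k \le n\mu_k/x_i^k$. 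Combining, $\sqrt{s_i^k/x_i^k} = s_i^k/\sqrt{x_i^k s_i^k} \le (n\mu_k/x_i^k)/\sqrt{\theta\mu_k} = \sqrt{n/\theta}\,\sqrt{n\mu_k}/x_i^k \cdot (1/\sqrt{n})$; so the whole question reduces to lower-bounding $x_i^k$ (and symmetrically $s_i^k$) by a quantity comparable to $\nu_k\sqrt{\mu_k/n}$ or better.

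The key is Lemma \ref{betaGt0}: $\underline{x}^k \ge \beta\nu_k$ and $\underline{s}^k \ge \beta\nu_k$ with $\beta \ge \min\{\underline{x}^0,\underline{s}^0,1\} > 0$ a constant independent of $n$. Thus $x_i^k \ge \beta\nu_k$ and $s_i^k \ge \beta\nu_k$ for all $i$. Plugging $x_i^k \ge \beta\nu_k$ into the bound from the previous paragraph, $\nu_k\sqrt{s_i^k/x_i^k} = \nu_k s_i^k/\sqrt{x_i^k s_i^k} \le \nu_k (n\mu_k/x_i^k)/\sqrt{\theta\mu_k} \le \nu_k \cdot n\mu_k/(\beta\nu_k\sqrt{\theta\mu_k}) = (1/\beta)\sqrt{n/\theta}\,\sqrt{n\mu_k}$. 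So $C_1 = 1/(\beta\sqrt{\theta})$ works, and this constant depends only on the fixed parameters $\theta$, $\rho$, and the initial point, not on $n$. The second inequality follows by the identical argument with the roles of $x$ and $s$ exchanged, using $s_i^k \ge \beta\nu_k$ and $x_i^k \le n\mu_k/s_i^k$.

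The one subtlety I would be careful about is that the chain above silently uses $\nu_k \le 1$ at one point (to absorb a stray $\nu_k$), which holds since $\nu_k = \prod_{j=0}^{k-1}(1-\sin(\alpha_j))$ is a product of factors in $(0,1)$ under Assumption 4 and $\alpha_j \in (0,\pi/2]$; and it uses that $\mu_k$ is bounded below from zero (Assumption 3) only implicitly — actually the bound does not need that, only $\mu_k > 0$, which is automatic. The genuine content is entirely Lemma \ref{betaGt0}, so the main obstacle is simply making sure the exponent of $n$ comes out to $1/2$ rather than something worse: the factor $n\mu_k$ in the numerator and $\sqrt{\mu_k}$ in the denominator combine to $\sqrt{n}\cdot\sqrt{n\mu_k}$, which is one power of $\sqrt{n}$ too many. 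I expect the fix is that one should instead bound $s_i^k \le x^{k\Tr}s^k/x_i^k$ crudely but then use $x_i^k s_i^k \ge \theta\mu_k$ together with $x_i^k \ge \beta\nu_k$ more carefully, or accept that $C_1$ may carry a benign factor; in the write-up I would track the $n$-dependence explicitly and confirm that the stated $\sqrt{n\mu_k}$ (not $n\sqrt{\mu_k}$) is what survives, adjusting $C_1$ to swallow only $n$-free constants.
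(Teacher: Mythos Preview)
Your argument has a genuine gap that you yourself flag but do not close: the bound you obtain carries an extra factor of $\sqrt{n}$, giving a constant like $\sqrt{n}/(\beta\sqrt{\theta})$ rather than one independent of $n$. (Your sentence ``So $C_1 = 1/(\beta\sqrt{\theta})$ works'' silently drops that $\sqrt{n}$; you then correctly notice it reappears in the last paragraph.) The speculative fixes you mention at the end do not work, because the problem is not a matter of tracking constants more carefully --- it is that you are bounding the wrong quantity in the wrong direction.

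Concretely: you write $\nu_k\sqrt{s_i^k/x_i^k} = \nu_k s_i^k/\sqrt{x_i^k s_i^k}$, then bound $s_i^k$ from above by $n\mu_k/x_i^k$ and $\sqrt{x_i^k s_i^k}$ from \emph{below} by $\sqrt{\theta\mu_k}$. Using both an upper and a lower bound on $x_i^k s_i^k$ is what produces the stray $\sqrt{n}$. The clean route --- and the one the paper takes --- is the algebraically equivalent rewriting
\[
\nu_k\sqrt{\frac{s_i^k}{x_i^k}} \;=\; \frac{\nu_k}{x_i^k}\,\sqrt{x_i^k s_i^k}\,.
\]
Now bound $\sqrt{x_i^k s_i^k}$ from \emph{above} by $\sqrt{n\mu_k}$ (since $x_i^k s_i^k \le x^{k^{\T}}s^k = n\mu_k$), and bound $\nu_k/x_i^k \le 1/\beta$ using $x_i^k \ge \beta\nu_k$ from Lemma~\ref{betaGt0}. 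This gives $C_1 = 1/\beta$ immediately, with no dependence on $n$ and no appearance of $\theta$. In particular the neighborhood condition $x_i^k s_i^k \ge \theta\mu_k$ is not needed for this lemma at all; only the trivial upper bound $x_i^k s_i^k \le n\mu_k$ and the lower bound on the iterates from Lemma~\ref{betaGt0} are used.
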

\begin{proof}
We know that $\min \{ \underline{x}^k, \underline{s}^k \}>0$, $\nu_k > 0$,
and $\beta>0$ is a constant independent of $n$. By the 
definition of $\beta_k$, we have 
$x_i^k \ge \underline{x}^k \ge \beta_k \nu_k \ge \beta \nu_k$ and 
$s_j^k \ge \underline{s}^k \ge \beta_k \nu_k \ge \beta \nu_k$.
This gives, for $\forall i \in \{1, \ldots, n\}$,
\begin{equation}
(D^{k}_{ii})^{-1}  \nu_k = \sqrt{\frac{s_i^k}{x_i^k}} \nu_k 
\le \frac{1}{\beta} \sqrt{s_i^k x_i^k} \le \frac{1}{\beta} 
\sqrt{n \mu_k} :=C_1 \sqrt{n \mu_k}. 
\nonumber 
\end{equation}
Using a similar argument for $s_j^k \ge \beta \nu_k$, we can show
that $ D_{ii} \nu_k \le C_1 \sqrt{n \mu_k}$.
\hfill \qed
\end{proof}

The main idea in the proof is based on a crucial observation used 
in many literatures, for example, Mizuno \cite{mizuno94} 
and Kojima \cite{kojima96}. We include the discussion here for completeness.
Let $(\bar{x}, \bar{\lambda}, \bar{s})$ be a feasible point satisfying 
$A\bar{x} =b$ and $A^{\Tr} \bar{\lambda}+ \bar{s}=c$. The 
existence of $(\bar{x}, \bar{\lambda}, \bar{s})$ is guaranteed by 
Assumption 2. We will make an additional assumption in the rest
discussion.
\newline
\newline
{\bf Assumption 5:}
\begin{itemize}
\item[] {\it There exist a big constant $M$ which is independent 
to the problem size $n$ and $m$ such that 
$\| \left( x^0-\bar{x}, s^0-\bar{s} \right) \| < M$. }
\end{itemize}

Since 
\[
A \dot{x} = r_b^k = \nu_k r_b^0 = \nu_k (A x^0 -b) = \nu_k A (x^0 - \bar{x}),
\]
we have
\[
A(\dot{x} - \nu_k (x^0-\bar{x}))=0.
\]
Similarly, since 
\[
A^{\Tr} \dot{\lambda} + \dot{s} = r_c^k = \nu_k r_c^0 = \nu_k (A^{\Tr}  \lambda^0 +s^0-c) 
= \nu_k ( A^{\Tr} (\lambda^0 - \bar{\lambda})+(s^0 - \bar{s})),
\]
we have
\[
A^{\Tr} (\dot{\lambda} - \nu_k (\lambda^0 - \bar{\lambda}))+(\dot{s} -\nu_k (s^0 - \bar{s}))=0.
\]
Using Lemma \ref{simple}, we have
\[
s \circ (\dot{x} - \nu_k (x^0 - \bar{x}))
+x \circ (\dot{s} -\nu_k (s^0 - \bar{s}))
= x \circ s - \nu_k s \circ (x^0 - \bar{x}) - \nu_k x \circ (s^0 - \bar{s}).
\]
Thus, in matrix form, we have
\begin{equation}
\left[ \begin{array}{ccc}
A & 0 & 0 \\
0 & A^{\Tr} & I \\
S & 0 & X
\end{array} \right] 
\left[ \begin{array}{ccc}
\dot{x} - \nu_k (x^0-\bar{x}) \\
\dot{\lambda} - \nu_k (\lambda^0 - \bar{\lambda}) \\
\dot{s} -\nu_k (s^0 - \bar{s})
\end{array} \right] 
=
\left[ \begin{array}{c}
0 \\  0  \\  x \circ s - \nu_k s \circ (x^0 - \bar{x}) - \nu_k x \circ (s^0 - \bar{s}) 
\end{array} \right] 
\label{matrixForm}
\end{equation}
Denote $(\delta x, \delta \lambda, \delta s) = (\dot{x} - \nu_k (x^0-\bar{x}),
\dot{\lambda} - \nu_k (\lambda^0 - \bar{\lambda}), \dot{s} -\nu_k (s^0 - \bar{s}))$
and 
$r= r^1 + r^2+ r^3$ with $(r^1, r^2, r^3)=(x \circ s, - \nu_k s \circ (x^0 - \bar{x}),
- \nu_k x \circ (s^0 - \bar{s}))$. For $i=1,2,3$, let $(\delta x^i, \delta \lambda^i, \delta s^i)$
be the solution of 
\begin{equation}
\left[ \begin{array}{ccc}
A & 0 & 0 \\
0 & A^{\Tr} & I \\
S & 0 & X
\end{array} \right] 
\left[ \begin{array}{ccc}
\delta x^i \\
\delta {\lambda}^i \\
\delta s^i
\end{array} \right] 
=
\left[ \begin{array}{c}
0 \\  0  \\  r^i 
\end{array} \right] 
\label{matrixSplit}
\end{equation}
Clearly, we have 
\begin{subequations}
\begin{gather}
\delta x = \delta x^1+\delta x^2+\delta x^3 =\dot{x} - \nu_k (x^0-\bar{x}), \label{deltaX} \\
\delta \lambda = \delta \lambda^1+\delta \lambda^2+\delta \lambda^3 
=\dot{\lambda} - \nu_k (\lambda^0-\bar{\lambda}) \label{deltaL}, \\
\delta s = \delta s^1+\delta s^2+\delta s^3 =\dot{s} - \nu_k (s^0-\bar{s}). \label{deltaS} 
\end{gather}
\end{subequations} 
From the second row of (\ref{matrixSplit}), we have 
$(D^{-1} \delta x^i)^{\Tr} (D \delta s^i) = 0$, 
for $i=1,2,3$, therefore,
\begin{equation}
\| D^{-1} \delta x^i \|^2, \| D \delta s^i \|^2 \le \| D^{-1} \delta x^i \|^2 + \| D \delta s^i \|^2
= \|D^{-1} \delta x^i + D \delta s^i \|^2.
\label{general}
\end{equation}
Applying $S \delta x^i + X \delta s^i= r^i$ to (\ref{general}) 
for $i=1,2,3$ respectively, we obtain the following relations
\begin{subequations}
\begin{gather}
\| D^{-1} \delta x^1 \|, \| D \delta s^1 \| \le  \| D^{-1} \delta x^1 + D \delta s^1 \|
=\| (Xs)^{\frac{1}{2}} \| = \sqrt{x^{\Tr}s} = \sqrt{n\mu}, \label{deltaX1S1} \\
\| D^{-1} \delta x^2 \|, \| D \delta s^2 \| \le  \| D^{-1} \delta x^2 + D \delta s^2 \|
=\nu_k \| D^{-1} (x^0-\bar{x}) \| ,  \label{deltaX2S2} \\
\| D^{-1} \delta x^3 \|, \| D \delta s^3 \| \le  \| D^{-1} \delta x^3 + D \delta s^3 \|
=\nu_k \| D (s^0-\bar{s}) \|. \label{deltaX3S3} 
\end{gather}
\label{deltaXS} 
\end{subequations} 
Considering (\ref{matrixSplit}) with $i=2$, we have
\[
S\delta x^2 + X\delta s^2 = r^2 = -\nu_k S(x^0 - \bar{x}),
\]
which is equivalent to 
\begin{equation}
\delta x^2 = -\nu_k (x^0 - \bar{x}) - D^2 \delta s^2.
\label{deltax2}
\end{equation}
Thus, from (\ref{deltaX}), (\ref{deltax2}), and (\ref{deltaXS}), we have
\begin{eqnarray}
\| D^{-1} \dot{x} \| & = & 
\| D^{-1} [\delta x^1 +\delta x^2 +\delta x^3 + \nu_k (x^0 - \bar{x})] \|
\nonumber \\
& = & \| D^{-1} \delta x^1 - D \delta s^2 + D^{-1} \delta x^3 \|
\nonumber \\
& \le & \| D^{-1} \delta x^1 \| + \| D \delta s^2  \| + \| D^{-1} \delta x^3 \|.
\end{eqnarray}
Considering (\ref{matrixSplit}) with $i=3$, we have
\[
S\delta x^3 + X\delta s^3 = r^3 = -\nu_k X(s^0 - \bar{s}),
\]
which is equivalent to 
\begin{equation}
\delta s^3 = -\nu_k (s^0 - \bar{s}) - D^{-2} \delta x^3.
\label{deltas3}
\end{equation}
Thus, from (\ref{deltaS}), (\ref{deltas3}), and (\ref{deltaXS}), we have
\begin{eqnarray}
\| D  \dot{s} \| & = & 
\| D  [\delta s^1 +\delta s^2 +\delta s^3 + \nu_k (s^0 - \bar{s})] \|
\nonumber \\
& = & \| D  \delta s^1 + D \delta s^2 - D^{-1} \delta x^3 \|
\nonumber \\
& \le & \| D  \delta s^1 \| + \| D \delta s^2  \| + \| D^{-1} \delta x^3 \|.
\end{eqnarray}

From (\ref{deltaX1S1}), we can summarize the above discussion as the following (cf. \cite{kojima96})
\begin{lemma}
Let $(x^0, \lambda^0, s^0)$ be the initial point of Algorithm \ref{mainAlgo},
$\bar{x}$ be a feasible solution of (\ref{LP}), and $(\bar{\lambda}, \bar{s})$ be a 
feasible solution of (\ref{DP}). Then
\begin{equation}
\| D  \dot{s} \|, \| D^{-1}  \dot{x} \|
\le \sqrt{n\mu} + \| D \delta s^2  \| + \| D^{-1} \delta x^3 \|.
\label{mainIneq}
\end{equation}
\label{wright}
\end{lemma}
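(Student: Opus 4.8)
The plan is to assemble the chain of identities and estimates established in the paragraphs just before the statement into two applications of the triangle inequality. First I would fix, via Assumption 2, a primal feasible point $\bar x$ with $A\bar x = b$ and a dual feasible pair $(\bar\lambda,\bar s)$ with $A^{\T}\bar\lambda + \bar s = c$, and note that $(\delta x,\delta\lambda,\delta s) := (\dot x - \nu_k(x^0-\bar x),\ \dot\lambda - \nu_k(\lambda^0-\bar\lambda),\ \dot s - \nu_k(s^0-\bar s))$ solves the system (\ref{matrixForm}) with right-hand side $r = r^1+r^2+r^3$; this uses only $A\dot x = r_b^k = \nu_k A(x^0-\bar x)$, its dual analogue, and Lemma \ref{simple}. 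By linearity of (\ref{matrixForm}) I split $r$ into its three summands and let $(\delta x^i,\delta\lambda^i,\delta s^i)$ solve (\ref{matrixSplit}), so that (\ref{deltaX})--(\ref{deltaS}) hold.

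The middle block row of (\ref{matrixSplit}) gives $(D^{-1}\delta x^i)^{\T}(D\delta s^i)=0$ for $i=1,2,3$, hence $\|D^{-1}\delta x^i\|,\|D\delta s^i\| \le \|D^{-1}\delta x^i + D\delta s^i\|$; combining this with the third block row $S\delta x^i + X\delta s^i = r^i$ and multiplying through by $X^{-1/2}S^{-1/2}$ yields (\ref{deltaXS}), of which I only need $\|D^{-1}\delta x^1\|,\|D\delta s^1\| \le \|(Xs)^{1/2}\| = \sqrt{n\mu}$, i.e. (\ref{deltaX1S1}).

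The one genuinely substantive move is to rewrite $\dot x$ and $\dot s$ so that the potentially large terms $\nu_k(x^0-\bar x)$ and $\nu_k(s^0-\bar s)$ cancel rather than being bounded directly. From the third block row of (\ref{matrixSplit}) with $i=2$ one has (\ref{deltax2}), namely $\delta x^2 = -\nu_k(x^0-\bar x) - D^2\delta s^2$; substituting it into $\dot x = \delta x^1+\delta x^2+\delta x^3+\nu_k(x^0-\bar x)$ cancels the $\nu_k(x^0-\bar x)$ term and leaves $D^{-1}\dot x = D^{-1}\delta x^1 - D\delta s^2 + D^{-1}\delta x^3$, so the triangle inequality and (\ref{deltaX1S1}) give $\|D^{-1}\dot x\| \le \sqrt{n\mu} + \|D\delta s^2\| + \|D^{-1}\delta x^3\|$. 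Symmetrically, substituting (\ref{deltas3}), $\delta s^3 = -\nu_k(s^0-\bar s) - D^{-2}\delta x^3$, into $\dot s = \delta s^1+\delta s^2+\delta s^3+\nu_k(s^0-\bar s)$ gives $D\dot s = D\delta s^1 + D\delta s^2 - D^{-1}\delta x^3$, and the same bound follows. The only point requiring care is the sign bookkeeping in these two substitutions: a crude triangle inequality on $\dot x = \delta x + \nu_k(x^0-\bar x)$ would instead retain $\nu_k\|D^{-1}(x^0-\bar x)\|$, a weaker estimate; the purpose of (\ref{deltax2}) and (\ref{deltas3}) is to trade those terms for $\|D\delta s^2\|$ and $\|D^{-1}\delta x^3\|$, which are controlled later via Lemma \ref{cond1} and Assumption 5. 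No idea beyond the preceding paragraphs is needed --- the lemma is a packaging step.
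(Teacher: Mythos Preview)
Your proposal is correct and follows essentially the same route as the paper: the lemma is indeed just a summary of the preceding derivation, and your identification of the key cancellation step --- using (\ref{deltax2}) and (\ref{deltas3}) to eliminate $\nu_k(x^0-\bar x)$ and $\nu_k(s^0-\bar s)$ before applying the triangle inequality --- matches the paper's argument exactly.
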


\begin{remark}
If the initial point $(x^0, \lambda^0, s^0)$ is a feasible point 
satisfying $A x^0=b$ and $A^{\Tr} \lambda^0 + s^0 = c$, then the 
problem is reduced to a feasible interior-point problem which has 
been discussed in \cite{yang13}. In this case, inequality (\ref{mainIneq}) is 
reduced to $\| D  \dot{s} \|, \| D^{-1}  \dot{x} \| \le \sqrt{n\mu}$
because $x^0=\bar{x}$, $s^0=\bar{s}$, and 
$\| D \delta s^2  \| =\| D^{-1} \delta x^3 \| =0$ from
(\ref{deltaX2S2}) and (\ref{deltaX3S3}). 
Using $\| D  \dot{s} \|, \| D^{-1}  \dot{x} \| \le \sqrt{n\mu}$, 
we have proved \cite{yang13} that a feasible arc-search algorithm 
is polynomial with complexity bound ${\mathcal O}(\sqrt{n}\log(1/\epsilon))$. 
In the remainder of the paper, we will focus on the case that
the initial point is infeasible.
\end{remark}


\begin{lemma}
Let $(\dot{x}, \dot{\lambda}, \dot{s})$ be 
defined in (\ref{doty}). Then, there is a constant $C_2$ independent of $n$ 
such that in every iteration of Algorithm \ref{mainAlgo}, the follwoing inequality holds.
\begin{equation}
\| D  \dot{s} \|, \| D^{-1}  \dot{x} \|
\le C_2 \sqrt{n \mu}.
\label{converg2}
\end{equation} 
\label{main1}
\end{lemma}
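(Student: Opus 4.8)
\textbf{Proof proposal for Lemma \ref{main1}.}

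The plan is to start from the already-established bound in Lemma \ref{wright},
\[
\| D \dot{s} \|,\ \| D^{-1} \dot{x} \| \le \sqrt{n\mu} + \| D \delta s^2 \| + \| D^{-1} \delta x^3 \|,
\]
and show that the last two terms are each themselves bounded by a constant multiple of $\sqrt{n\mu}$. By (\ref{deltaX2S2}) and (\ref{deltaX3S3}) we have $\| D \delta s^2 \| \le \nu_k \| D^{-1}(x^0-\bar x) \|$ and $\| D^{-1}\delta x^3 \| \le \nu_k \| D(s^0-\bar s) \|$, where $(\bar x,\bar\lambda,\bar s)$ is the feasible point supplied by Assumption 2. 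So it suffices to bound $\nu_k \| D^{-1}(x^0-\bar x) \|$ and $\nu_k \| D(s^0-\bar s) \|$.

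The key observation is that the factor $\nu_k$ multiplying these norms is exactly what is needed to tame the potential blow-up of $D^{-1}_{ii}$ (resp.\ $D_{ii}$) as the iterates approach the boundary. First I would write the norm componentwise and push $\nu_k$ inside,
\[
\nu_k^2 \| D^{-1}(x^0-\bar x) \|^2 = \sum_{i=1}^n \bigl( \nu_k (D^k_{ii})^{-1} \bigr)^2 (x_i^0-\bar x_i)^2 ,
\]
then apply Lemma \ref{cond1}, which gives $\nu_k (D^k_{ii})^{-1} \le C_1\sqrt{n\mu_k}$ for every $i$, to obtain $\nu_k^2 \| D^{-1}(x^0-\bar x) \|^2 \le C_1^2 \, n\mu_k\, \| x^0-\bar x \|^2$. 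Assumption 5 bounds $\| x^0-\bar x \| \le \| (x^0-\bar x, s^0-\bar s) \| < M$, so $\nu_k \| D^{-1}(x^0-\bar x) \| \le C_1 M \sqrt{n\mu_k}$. The same argument with the companion estimate $\nu_k D^k_{ii} \le C_1 \sqrt{n\mu_k}$ from Lemma \ref{cond1} and $\| s^0-\bar s \| < M$ yields $\nu_k \| D(s^0-\bar s) \| \le C_1 M \sqrt{n\mu_k}$. Combining these with Lemma \ref{wright} gives $\| D\dot s \|, \| D^{-1}\dot x \| \le (1 + 2C_1 M)\sqrt{n\mu}$, so the lemma holds with $C_2 := 1 + 2C_1 M$, which is independent of $n$ since $C_1 = 1/\beta$ is independent of $n$ (Lemma \ref{betaGt0}) and $M$ is a fixed constant.

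The only real content — and thus the step that deserves care rather than being a routine calculation — is the matching of the $\nu_k$ factor coming from the residual decay (Lemma \ref{basic}) against the reciprocal-of-$\nu_k$ growth hidden in $D^{\pm 1}$; this cancellation is precisely the役 of Lemma \ref{cond1}, which in turn rests on the uniform lower bound $\beta>0$ for $\min\{\underline x^k,\underline s^k\}/\nu_k$ from Lemma \ref{betaGt0}. Everything else (the splitting $\delta x = \delta x^1+\delta x^2+\delta x^3$ and the orthogonality bounds in (\ref{deltaXS})) is already done in the excerpt, and the appeal to Assumptions 2 and 5 is what removes the dependence on $n$ entirely.
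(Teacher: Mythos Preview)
Your argument is correct and follows essentially the same route as the paper: both proofs start from Lemma~\ref{wright}, invoke (\ref{deltaX2S2})--(\ref{deltaX3S3}), and then apply the pointwise bound $\nu_k D_{ii}^{\pm 1}\le C_1\sqrt{n\mu_k}$ of Lemma~\ref{cond1} together with Assumption~5 to control the two remaining terms. The only cosmetic difference is that the paper first notes the orthogonality $(\delta s^2)^{\T}\delta x^3=0$ and packages $\|D\delta s^2\|^2+\|D^{-1}\delta x^3\|^2$ into a single quadratic form before applying Lemma~\ref{cond1}, whereas you bound each term separately; the resulting constants differ only by an irrelevant factor.
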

\begin{proof}
Since $D$ and $D^{-1}$ are diagonal matrices, in view of (\ref{matrixSplit}), we have 
\[ (D\delta s^2)^{\T} (D^{-1} \delta x^3)=(\delta s^2)^{\T} (\delta x^3)=0. \]
Let $(x^0, \lambda^0, s^0)$ be the initial point of Algorithm 
\ref{mainAlgo}, $\bar{x}$ be a feasible solution of (\ref{LP}), 
and $(\bar{\lambda}, \bar{s})$ be a feasible solution of 
(\ref{DP}). Then, from (\ref{deltaX2S2}) and 
(\ref{deltaX3S3}), we have
\begin{eqnarray}
& & \| D\delta s^2- D^{-1} \delta x^3 \|^2 \nonumber \\
& = & 
\| D\delta s^2 \|^2 + \| D^{-1} \delta x^3 \|^2
\nonumber \\
& = & \nu_k^2 \| D^{-1} (x^0-\bar{x})  \|^2 + \nu_k^2 \| D (s^0-\bar{s}) \|^2 
\nonumber \\
& = & \nu_k^2 \left[(x^0-\bar{x})^{\T} \diag \left( \frac{s_i^k}{x_i^k} \right) (x^0-\bar{x})
+(s^0-\bar{s})^{\T} \diag \left( \frac{x_i^k}{s_i^k} \right) (s^0-\bar{s}) \right]
\nonumber \\
& = & \nu_k^2 \left( x^0-\bar{x}, s^0-\bar{s} \right)^{\T} 
\left[ \begin{array}{cc} 
\diag \left( \frac{s_i^k}{x_i^k} \right) & 0 \\
0 & \diag \left( \frac{x_i^k}{s_i^k} \right) \end{array} \right]
\left( x^0-\bar{x}, s^0-\bar{s} \right)
\nonumber \\
& \le & \nu_k^2 \max_i \left\{ \frac{s_i^k}{x_i^k}, \frac{x_i^k}{s_i^k} \right\} 
\| \left( x^0-\bar{x}, s^0-\bar{s} \right) \|^2 \nonumber \\
& = & \nu_k^2 \max_i \{ D_{ii}^{-2}, D_{ii}^{2} \} 
\| \left( x^0-\bar{x}, s^0-\bar{s} \right) \|^2 \nonumber \\
& \le & C_1^2 n \mu_k \| \left( x^0-\bar{x}, s^0-\bar{s} \right) \|^2,
\end{eqnarray}
where the last inequality follows from Lemma \ref{cond1}.
Adjoining this result with Lemma \ref{wright} and Assumption 5 gives
\[
\| D  \dot{s} \|, \| D^{-1}  \dot{x} \|
\le \sqrt{n\mu_k} + \| \left( x^0-\bar{x}, s^0-\bar{s} \right) \| C_1\sqrt{n \mu_k} 
\le C_2 \sqrt{n \mu_k}.
\]
This finishes the proof.
\hfill
\qed
\end{proof}

From Lemma \ref{main1}, we can obtain several inequalities that will be 
used in our convergence analysis. The first one is given as follows.

\begin{lemma}
Let $(\dot{x}, \dot{\lambda}, \dot{s})$ and $(\ddot{x}, \ddot{\lambda}, \ddot{s})$ be 
defined in (\ref{doty}) and (\ref{ddoty}). 
Then, there exists a constant
$C_3>0$ independent of $n$ such that the following relations hold.
\begin{subequations}
\begin{gather}
\| D^{-1} \ddot{x} \|, \| D  \ddot{s} \| \le C_3 n  \mu_k^{0.5}, 
\\
\| D^{-1} p_{x} \|, \| D  p_{s} \| \le \sqrt{\frac{n}{\theta}} \mu_k^{0.5}, 
\\
\| D^{-1} q_{x} \|, \| D  q_{s} \| \le 
\frac{2C_2^2 }{\sqrt{\theta}} n \mu_k^{0.5}.
\end{gather}
\end{subequations}
\label{extension}
\end{lemma}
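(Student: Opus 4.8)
The plan is to notice that the three linear systems (\ref{barp}), (\ref{barq}), and (\ref{ddoty}) all share one algebraic form: each reads
\[
\left[ \begin{array}{ccc} A & 0 & 0 \\ 0 & A^{\T} & I \\ S & 0 & X \end{array} \right]
\left[ \begin{array}{c} u \\ v \\ w \end{array} \right]
= \left[ \begin{array}{c} 0 \\ 0 \\ t \end{array} \right],
\]
with $(u,w,t)=(p_x,p_s,\mu_k e)$ for (\ref{barp}), $(u,w,t)=(q_x,q_s,-2\dot{x}\circ\dot{s})$ for (\ref{barq}), and $(u,w,t)=(\ddot{x},\ddot{s},-2\dot{x}\circ\dot{s}+\sigma_k\mu_k e)$ for (\ref{ddoty}). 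For any such system the first block row gives $Au=0$ and the second gives $w=-A^{\T}v$, so $u^{\T}w=-(Au)^{\T}v=0$, and since $D$ is diagonal also $(D^{-1}u)^{\T}(Dw)=u^{\T}w=0$. The third block row gives $Su+Xw=t$; multiplying by $(XS)^{-1/2}$ and using $D^{-1}=X^{-1/2}S^{1/2}$, $D=X^{1/2}S^{-1/2}$ yields $D^{-1}u+Dw=(XS)^{-1/2}t$. Combining the orthogonality with this identity,
\[
\|D^{-1}u\|^2+\|Dw\|^2=\|D^{-1}u+Dw\|^2=\|(XS)^{-1/2}t\|^2 ,
\]
so $\|D^{-1}u\|,\|Dw\|\le\|(XS)^{-1/2}t\|$. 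Everything then reduces to bounding this right-hand side, and here the neighborhood condition (\ref{cond4cong}) enters: $(XS)^{-1/2}$ is diagonal with entries $(x_i^k s_i^k)^{-1/2}\le(\theta\mu_k)^{-1/2}$.

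First I would prove part (b). With $t=\mu_k e$ one gets $\|(XS)^{-1/2}\mu_k e\|^2=\sum_i\mu_k^2/(x_i^k s_i^k)\le n\mu_k/\theta$, hence $\|D^{-1}p_x\|,\|Dp_s\|\le\sqrt{n/\theta}\,\mu_k^{0.5}$, exactly as stated. Next, for part (c) with $t=-2\dot{x}\circ\dot{s}$, I would first estimate $\|\dot{x}\circ\dot{s}\|$ by writing $\dot{x}_i\dot{s}_i=(D^{-1}_{ii}\dot{x}_i)(D_{ii}\dot{s}_i)$ and using $\|a\circ b\|\le\|a\|\,\|b\|$ together with Lemma \ref{main1}:
\[
\|\dot{x}\circ\dot{s}\|=\|(D^{-1}\dot{x})\circ(D\dot{s})\|\le\|D^{-1}\dot{x}\|\,\|D\dot{s}\|\le C_2^2 n\mu_k .
\]
Then $\|(XS)^{-1/2}(-2\dot{x}\circ\dot{s})\|^2\le(4/(\theta\mu_k))\|\dot{x}\circ\dot{s}\|^2\le 4C_2^4 n^2\mu_k/\theta$, giving $\|D^{-1}q_x\|,\|Dq_s\|\le(2C_2^2/\sqrt{\theta})\,n\mu_k^{0.5}$, which is part (c).

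Finally, part (a) follows by combining the two: since $\ddot{x}=p_x\sigma_k+q_x$ and $\ddot{s}=p_s\sigma_k+q_s$ by (\ref{ddoy2}), with $0<\sigma_k\le\sigma_{\max}\le 1$ and $\sqrt{n}\le n$ for $n\ge 1$,
\[
\|D^{-1}\ddot{x}\|\le\sigma_k\|D^{-1}p_x\|+\|D^{-1}q_x\|\le\Big(\frac{1}{\sqrt{\theta}}+\frac{2C_2^2}{\sqrt{\theta}}\Big) n\mu_k^{0.5}:=C_3 n\mu_k^{0.5},
\]
and identically for $\|D\ddot{s}\|$; alternatively one applies the master inequality directly to (\ref{ddoty}) with $t=-2\dot{x}\circ\dot{s}+\sigma_k\mu_k e$ and splits the norm. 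The whole argument is routine once the master inequality $\|D^{-1}u\|,\|Dw\|\le\|(XS)^{-1/2}t\|$ is in place; the one place that needs genuine care is the estimate of $\|\dot{x}\circ\dot{s}\|$ through the $D$-scaled factors and Lemma \ref{main1}, since a naive bound would cost an extra power of $n$ — it is precisely this step that keeps the power of $n$ inside the square root linear and hence yields the stated $n\mu_k^{0.5}$ growth in parts (a) and (c) rather than a worse $n^{3/2}\mu_k^{0.5}$.
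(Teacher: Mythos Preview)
Your proof is correct and follows essentially the same route as the paper's own argument: you exploit the orthogonality $u^{\T}w=0$ coming from the first two block rows, rewrite the third block row as $D^{-1}u+Dw=(XS)^{-1/2}t$, and then bound the right-hand side using $x_i^ks_i^k\ge\theta\mu_k$ together with Lemma~\ref{main1} for the $\dot{x}\circ\dot{s}$ term. The only cosmetic difference is that you state the ``master inequality'' $\|D^{-1}u\|,\|Dw\|\le\|(XS)^{-1/2}t\|$ once and reuse it, whereas the paper repeats the derivation separately for $(p_x,p_s)$ and $(q_x,q_s)$ and then combines them for $(\ddot{x},\ddot{s})$; the estimates and the constants obtained are identical.
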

\begin{proof}
From the last row of (\ref{barq}), using the facts that $q_{x}^{\T} q_{s} = 0$, 
$x_i^k s_i^k > \theta \mu_k$, and Lemma
\ref{main1}, we have
\begin{eqnarray}
& & S q_{x} + X q_{s} = -2 \dot{x} \circ \dot{s}  \nonumber \\
\Longleftrightarrow & & D^{-1} q_{x} + D q_{s} = 2 (XS)^{-0.5} (-\dot{x} \circ \dot{s} )
=2 (XS)^{-0.5} (-D^{-1} \dot{x} \circ D \dot{s} )
 \nonumber \\
\Longrightarrow & & \| D^{-1} q_{x}\|^2,\| D q_{s} \|^2 
\le \| D^{-1} q_{x}\|^2   + \| D q_{s} \|^2 
= \| D^{-1}  q_{x} + D q_{s} \|^2   \nonumber \\
& \le & 4 \| (XS)^{-0.5} \|^2 \left( \| D^{-1}  \dot{x} \| \cdot \| D \dot{s} \| \right)^2
 \nonumber \\
& \le & \frac{4}{\theta \mu_k} \left( C_2^2 n \mu_k \right)^2
=\frac{(2C_2^2n)^2}{\theta}\mu_k. \nonumber 
\end{eqnarray}
Taking the square root on both sides gives
\begin{equation}
\| D^{-1} q_{x}\|,\| D q_{s} \| \le \frac{2 C_2^2}{\sqrt{\theta}} n \sqrt{\mu_k}.
\label{ddotnorm1}
\end{equation}
From the last row of (\ref{barp}), using the facts that $p_{x}^{\T} p_{s} = 0$ and 
$x_i^k s_i^k \ge \theta \mu_k$, we have
\begin{eqnarray}
& & S p_{x} + X p_{s} = \mu_k e  \nonumber \\
\Longleftrightarrow & & D^{-1} p_{x} + D p_{s} = (XS)^{-0.5} \mu_k e \nonumber \\
\Longrightarrow & & \| D^{-1} p_{x}\|^2,\| D p_{s} \|^2 \le 
\| D^{-1} p_{x}\|^2   + \| D p_{s} \|^2 
\nonumber \\
& & =\| D^{-1}  p_{x} + D p_{s} \|^2     
\le \| (XS)^{-0.5} \|^2 n (\mu_k)^2 \nonumber \\
& &
\le n \mu_k/\theta. \nonumber 
\end{eqnarray}
Taking the square root on both sides gives
\begin{equation}
\| D^{-1} p_{x}\|,\| D p_{s} \| \le \sqrt{\frac{n}{\theta}} \sqrt{\mu_k}.
\label{ddotnorm2}
\end{equation}
Combining (\ref{ddotnorm1}) and (\ref{ddotnorm2}) proves the lemma.
\hfill \qed
\end{proof}

The following inequalities are direct results of Lemmas \ref{main1} and \ref{extension}.

\begin{lemma}
Let $(\dot{x}, \dot{\lambda}, \dot{s})$ and $(\ddot{x}, \ddot{\lambda}, \ddot{s})$ be defined in (\ref{doty}) and (\ref{ddoty}). 
Then, the following relations hold.
\begin{equation}
\frac{| \dot{x}^{\T} \dot{s} |}{n} \le C_2^2 \mu_k, \hspace{0.1in}
\frac{| \ddot{x}^{\T} \dot{s}|}{n} \le C_2C_3 \sqrt{n} \mu_k, \hspace{0.1in}
\frac{| \dot{x}^{\T} \ddot{s}|}{n} \le C_2C_3 \sqrt{n} \mu_k. 
\label{rela1}
\end{equation}
Moreover,
\begin{equation}
| \dot{x}_i \dot{s}_i | \le C_2^2 n \mu_k, \hspace{0.1in}
| \ddot{x}_i \dot{s}_i| \le C_2C_3 n^{\frac{3}{2}} \mu_k, \hspace{0.1in}
| \dot{x}_i \ddot{s}_i| \le C_2C_3 n^{\frac{3}{2}} \mu_k, \hspace{0.1in}
| \ddot{x}_i \ddot{s}_i| \le C_3^2 n^2 \mu_k. 
\label{rela2}
\end{equation}
\label{inequalities}
\label{lemma5}
\end{lemma}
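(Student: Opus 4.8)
The plan is to reduce every quantity in the statement to an inner product of a vector of the form $D^{-1}(\cdot)$ against a vector of the form $D(\cdot)$, and then to apply the Cauchy--Schwarz inequality together with the scaled-norm bounds already established in Lemmas \ref{main1} and \ref{extension}. The key elementary observation is that since $D=X^{\frac{1}{2}}S^{-\frac{1}{2}}$ is a positive diagonal matrix, $D^{-1}$ and $D$ commute and $D^{-1}D=I$; hence for any $u,w\in\R^n$ one has $u^{\T}w=(D^{-1}u)^{\T}(Dw)$. Applying this with $(u,w)$ equal to $(\dot{x},\dot{s})$, $(\ddot{x},\dot{s})$, $(\dot{x},\ddot{s})$, and $(\ddot{x},\ddot{s})$ turns the displayed expressions into inner products whose two factors are precisely the vectors bounded in the earlier lemmas.

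For the first block (\ref{rela1}) I would write $\dot{x}^{\T}\dot{s}=(D^{-1}\dot{x})^{\T}(D\dot{s})$, so that $|\dot{x}^{\T}\dot{s}|\le\|D^{-1}\dot{x}\|\,\|D\dot{s}\|\le C_2^2\,n\mu_k$ by Lemma \ref{main1}; dividing by $n$ gives the first estimate. Likewise $|\ddot{x}^{\T}\dot{s}|\le\|D^{-1}\ddot{x}\|\,\|D\dot{s}\|\le (C_3 n\mu_k^{0.5})(C_2\sqrt{n\mu_k})=C_2C_3\,n^{\frac{3}{2}}\mu_k$, using Lemma \ref{extension} for the first factor and Lemma \ref{main1} for the second; dividing by $n$ gives $|\ddot{x}^{\T}\dot{s}|/n\le C_2C_3\sqrt{n}\,\mu_k$, and the bound on $|\dot{x}^{\T}\ddot{s}|/n$ is entirely symmetric.

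For the componentwise inequalities (\ref{rela2}) I would use that the absolute value of any single coordinate of a vector is bounded by its Euclidean norm, i.e. $|u_i|\le\|u\|$. Writing $\dot{x}_i\dot{s}_i=(D^{-1}\dot{x})_i(D\dot{s})_i$ gives $|\dot{x}_i\dot{s}_i|\le\|D^{-1}\dot{x}\|\,\|D\dot{s}\|\le C_2^2 n\mu_k$; the products $\ddot{x}_i\dot{s}_i$, $\dot{x}_i\ddot{s}_i$, and $\ddot{x}_i\ddot{s}_i$ are handled identically, each time pairing the appropriate bound from Lemma \ref{main1} or Lemma \ref{extension}, which yields the factors $C_2C_3 n^{\frac{3}{2}}\mu_k$, $C_2C_3 n^{\frac{3}{2}}\mu_k$, and $C_3^2 n^2\mu_k$ respectively.

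There is essentially no hard step here: the lemma is a bookkeeping corollary of its two predecessors, and the only point worth stating carefully is the commutation identity $u^{\T}w=(D^{-1}u)^{\T}(Dw)$ for diagonal $D$, which is what permits the separately-controlled scaled norms to be recombined. If anything demands attention, it is merely tracking the powers of $n$ and of $\mu_k$ so that $C_2$ and $C_3$ appear in exactly the stated combinations.
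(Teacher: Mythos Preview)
Your proposal is correct and follows essentially the same route as the paper's proof: rewrite each product via the diagonal scaling $u^{\T}w=(D^{-1}u)^{\T}(Dw)$ (or componentwise $u_iw_i=(D^{-1}u)_i(Dw)_i$), apply Cauchy--Schwarz together with $|v_i|\le\|v\|$, and then invoke the norm bounds of Lemmas~\ref{main1} and~\ref{extension}. The paper carries out exactly these manipulations, so nothing further is needed.
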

\begin{proof}
The first relation of (\ref{rela1}) is given as follows.
\begin{eqnarray}
\frac{| \dot{x}^{\T} \dot{s} |}{n} = \frac{| (D^{-1} \dot{x})^{\T} (D \dot{s}) |}{n}
\le \frac{ \| D^{-1}  \dot{x} \| \cdot  \| D \dot{s}\|}{n}  \le  C_2^2 \mu_k.
\end{eqnarray}
Similarly, we have
\begin{eqnarray}
\frac{| \ddot{x}^{\T} \dot{s} |}{n} = \frac{| (D^{-1} \ddot{x})^{\T} (D \dot{s}) |}{n}
\le \frac{ \| D^{-1}  \ddot{x} \| \cdot  \| D \dot{s}\|}{n}  \le  C_2C_3 \sqrt{n}\mu_k,
\end{eqnarray}
and
\begin{eqnarray}
\frac{| \dot{x}^{\T} \ddot{s} |}{n} = \frac{| (D^{-1} \dot{x})^{\T} (D \ddot{s}) |}{n}
\le \frac{ \| D^{-1}  \dot{x} \| \cdot  \| D \ddot{s}\|}{n}  \le  C_2C_3 \sqrt{n}\mu_k.
\end{eqnarray}
The first relation of (\ref{rela2}) is given as follows.
\begin{eqnarray}
| \dot{x}_i \dot{s}_i | = | D_{ii}^{-1} \dot{x}_i D_{ii} \dot{s}_i |
\le | D_{ii}^{-1} \dot{x}_i | \cdot | D_{ii} \dot{s}_i |
\le \| D^{-1}  \dot{x} \| \cdot  \| D \dot{s}\| \le  C_2^2 n \mu_k.
\end{eqnarray}
Similar arguments can be used for the rest inequalities of (\ref{rela2}).
\hfill \qed
\end{proof}

Now we are ready to show that there exists a constant 
$\kappa_0={\mathcal O}({n}^{\frac{3}{2}})$ such that for every 
iteration, for some $\sigma_k \in [\sigma_{\min},\sigma_{\max}]$ and
all $\sin(\alpha_k) \in (0,\frac{1}{\kappa_0}]$, all conditions 
in Step 2 of Algorithm \ref{mainAlgo} hold.

\begin{lemma} There exists a positive constant $C_4$ independent of $n$ and
an $\bar{\alpha}$ defined by $\sin(\bar{\alpha}) \ge \frac{C_4}{\sqrt{n}}$ such that
for $\forall k \ge 0$ and $\sin(\alpha_k) \in (0,\sin(\bar{\alpha})]$,
\begin{equation}
(x_i^{k+1},s_i^{k+1}):=(x_i(\sigma_k,\alpha_k), s_i(\sigma_k,\alpha_k) ) 
\ge (\phi_k, \psi_k) >0
\label{posity}
\end{equation}
holds.
\label{positiveCond}
\end{lemma}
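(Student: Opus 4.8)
The plan is to reduce the statement to the two componentwise inequalities already isolated in the discussion preceding Proposition \ref{secondBound}. By (\ref{phi}) and (\ref{psi}), if $x^{k+1}\ge\rho x^k$ and $s^{k+1}\ge\rho s^k$ then $x^{k+1}\ge\phi_k e$ and $s^{k+1}\ge\psi_k e$, which is exactly (\ref{posity}); so it suffices to exhibit a lower bound on the admissible $\sin(\alpha_k)$, of order $1/\sqrt n$, under which (\ref{posi3}) and (\ref{posi4}) hold, i.e., componentwise,
\[
(1-\rho)x_i^k\ge\dot{x}_i\sin(\alpha_k)-\ddot{x}_i(1-\cos(\alpha_k)),\qquad
(1-\rho)s_i^k\ge\dot{s}_i\sin(\alpha_k)-\ddot{s}_i(1-\cos(\alpha_k))
\]
for every $i$. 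It is enough to control the right-hand sides by their absolute values.

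The first step is to pass to the scaled coordinates. Writing $\dot{x}_i=D_{ii}(D^{-1}\dot{x})_i$ and $\ddot{x}_i=D_{ii}(D^{-1}\ddot{x})_i$, Lemmas \ref{main1} and \ref{extension} give $|\dot{x}_i|\le D_{ii}C_2\sqrt{n\mu_k}$ and $|\ddot{x}_i|\le D_{ii}C_3 n\sqrt{\mu_k}$; symmetrically $|\dot{s}_i|\le D_{ii}^{-1}C_2\sqrt{n\mu_k}$ and $|\ddot{s}_i|\le D_{ii}^{-1}C_3 n\sqrt{\mu_k}$. Now impose $\sin(\alpha_k)\le C_4/\sqrt n$ with $C_4\le 1$ and use $1-\cos(\alpha_k)\le\sin^2(\alpha_k)\le C_4^2/n$ from Lemma \ref{sincos}; then
\[
|\dot{x}_i|\sin(\alpha_k)+|\ddot{x}_i|(1-\cos(\alpha_k))\le D_{ii}\sqrt{\mu_k}\,\bigl(C_2C_4+C_3C_4^2\bigr)\le D_{ii}\sqrt{\mu_k}\,C_4(C_2+C_3),
\]
and the same estimate holds with $D_{ii}$ replaced by $D_{ii}^{-1}$ for the $s$-term.

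The decisive step is to convert these scaled bounds back to $x_i^k$ and $s_i^k$ using the neighborhood condition (\ref{cond4cong}), $x_i^k s_i^k\ge\theta\mu_k$: since $D_{ii}\sqrt{\mu_k}=\sqrt{x_i^k\mu_k/s_i^k}$ and $1/s_i^k\le x_i^k/(\theta\mu_k)$, one gets $D_{ii}\sqrt{\mu_k}\le x_i^k/\sqrt\theta$, and likewise $D_{ii}^{-1}\sqrt{\mu_k}\le s_i^k/\sqrt\theta$. Hence the perturbation terms above are bounded by $\frac{C_4(C_2+C_3)}{\sqrt\theta}x_i^k$ and $\frac{C_4(C_2+C_3)}{\sqrt\theta}s_i^k$ respectively, so taking
\[
C_4=\min\Bigl\{1,\ \frac{(1-\rho)\sqrt\theta}{C_2+C_3}\Bigr\}>0,
\]
a constant independent of $n$, forces both (\ref{posi3}) and (\ref{posi4}) for every $\sin(\alpha_k)\in(0,C_4/\sqrt n]$, and $\bar\alpha$ with $\sin(\bar\alpha)=C_4/\sqrt n\in(0,1]$ finishes the proof. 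The one subtlety to watch is the estimate of $\dot{x}_i,\ddot{x}_i$: bounding $D_{ii}$ itself via $\nu_k$ through Lemma \ref{cond1} would introduce a stray factor $\mu_k/\nu_k^2$ for which no lower bound is available; keeping the whole estimate in terms of $D_{ii}$ and only then invoking $x_i^k s_i^k\ge\theta\mu_k$ is precisely what yields the clean $1/\sqrt n$ step length and the $n$-independence of $C_4$.
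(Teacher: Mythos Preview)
Your proof is correct and follows essentially the same route as the paper: reduce to (\ref{posi3})--(\ref{posi4}), pass to the $D$-scaled coordinates via Lemmas \ref{main1} and \ref{extension}, and then convert back to $x_i^k$ (respectively $s_i^k$) using the neighborhood condition $x_i^k s_i^k\ge\theta\mu_k$. The paper organizes the same estimate by multiplying the inequality through by $D_{ii}^{-1}$ and lower-bounding $D_{ii}^{-1}x_i^k=(x_i^k s_i^k)^{1/2}\ge\sqrt{\theta\mu_k}$, arriving at the explicit choice $C_4=\frac{\sqrt{\theta}(1-\rho)}{2\max\{C_2,\sqrt{C_3}\}}$, which differs from yours only cosmetically.
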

\begin{proof}
From (\ref{posi1}) and (\ref{phi}), a conservative estimation can be obtained by 
\begin{eqnarray}
x_i(\sigma_k,\alpha_k) = x_i^k-\dot{x}_i \sin(\alpha_k) +\ddot{x}_i(1-\cos(\alpha_k)) \ge \rho x_i^k
\nonumber
\end{eqnarray}
which is equivalent to 
\[
x_i^k(1-\rho)-\dot{x}_i \sin(\alpha_k) +\ddot{x}_i(1-\cos(\alpha_k)) \ge 0
\]
Multiplying $D_{ii}^{-1}$ to this inequality and using Lemmas \ref{main1}, 
\ref{inequalities}, and \ref{sincos}, we have 
\begin{eqnarray}
(x_i^ks_i^k)^{0.5}(1-\rho)-D_{ii}^{-1} \dot{x}_i \sin(\alpha_k) +D_{ii}^{-1} \ddot{x}_i(1-\cos(\alpha_k)) 
\nonumber \\
\ge  \sqrt{\mu_k } \left(\sqrt{\theta} (1-\rho)-C_2\sqrt{n}\sin(\alpha_k) -C_3n \sin^2(\alpha_k) \right)
\nonumber
\end{eqnarray}
Clearly, the last expression is greater than zero for all 
$ \sin(\alpha_k) \le \frac{C_4}{\sqrt{n}} \le \sin(\bar{\alpha})$, 
where $C_4 = \frac{\sqrt{\theta} (1-\rho)}{2\max\{C_2, \sqrt{C_3}\}}$. 
This proves $x_i^{k+1} \ge \phi_k >0$. Similarly, we have $s_i^{k+1} \ge \psi_k>0$.
\hfill \qed
\end{proof}

\begin{lemma} 
There exists a positive constant $C_5$ independent of $n$ and
an $\hat{\alpha}$ defined by $\sin(\hat{\alpha}) \ge \frac{C_5}{n^{\frac{1}{4}}}$ such that
for $\forall k \ge 0$ and $\sin(\alpha) \in (0,\sin(\hat{\alpha})]$, the following relation
\begin{equation}
\mu_k (\sigma_k,\alpha_k) \le \mu_k \left( 1 - \frac{\sin(\alpha_k)}{4} \right)
\le \mu_k \left( 1 - \frac{C_5}{4n^{\frac{1}{4}}} \right)
\label{polynoty}
\end{equation}
\label{objDec}
\end{lemma}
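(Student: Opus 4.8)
The plan is to start from the exact expression for the updated duality measure obtained in Lemma \ref{simple2}, i.e. equation (\ref{diffstepsize}),
\[
n\mu(\sigma_k,\alpha_k) = n\mu_k(1-\sin(\alpha_k)) + n\sigma_k\mu_k(1-\cos(\alpha_k)) - \dot{x}^{\T}\dot{s}(1-\cos(\alpha_k))^2 - \left(\dot{x}^{\T}\ddot{s}+\dot{s}^{\T}\ddot{x}\right)\sin(\alpha_k)(1-\cos(\alpha_k)),
\]
treating the first term as the main term and the remaining three as perturbations to be bounded. First I would use $0\le\sigma_k\le\sigma_{\max}\le 1$ together with Lemma \ref{sincos} to replace each factor $1-\cos(\alpha_k)$ by $\sin^2(\alpha_k)$, so that $n\sigma_k\mu_k(1-\cos(\alpha_k))\le n\sigma_{\max}\mu_k\sin^2(\alpha_k)$, $(1-\cos(\alpha_k))^2\le\sin^4(\alpha_k)$, and $\sin(\alpha_k)(1-\cos(\alpha_k))\le\sin^3(\alpha_k)$. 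Then I would invoke the bounds $|\dot{x}^{\T}\dot{s}|\le C_2^2 n\mu_k$ and $|\dot{x}^{\T}\ddot{s}|,|\dot{s}^{\T}\ddot{x}|\le C_2C_3 n^{3/2}\mu_k$ from Lemma \ref{lemma5}. Dividing by $n$ this gives
\[
\mu(\sigma_k,\alpha_k) \le \mu_k\left(1-\sin(\alpha_k)+\sigma_{\max}\sin^2(\alpha_k)+2C_2C_3\sqrt{n}\,\sin^3(\alpha_k)+C_2^2\sin^4(\alpha_k)\right).
\]

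Next, to obtain the first claimed inequality it suffices to force the perturbation to be at most $-\tfrac14\sin(\alpha_k)$, i.e. (after dividing by $\sin(\alpha_k)>0$) that $\sigma_{\max}\sin(\alpha_k)+2C_2C_3\sqrt{n}\,\sin^2(\alpha_k)+C_2^2\sin^3(\alpha_k)\le\tfrac34$. Each summand is increasing in $\sin(\alpha_k)$ on $(0,\pi/2]$, so it is enough to guarantee each is at most $\tfrac14$ at $\sin(\alpha_k)=\sin(\hat\alpha)$. I would therefore set $\sin(\hat\alpha)=C_5/n^{1/4}$ and pick $C_5>0$ depending only on $\sigma_{\max},C_2,C_3$ (for example $C_5=\min\{1,(4\sigma_{\max})^{-1},(8C_2C_3)^{-1/2},(4C_2^2)^{-1/3}\}$), so that, using $n^{1/4}\ge 1$, one has $\sigma_{\max}\sin(\hat\alpha)\le\tfrac14$, $2C_2C_3\sqrt{n}\sin^2(\hat\alpha)=2C_2C_3C_5^2\le\tfrac14$, and $C_2^2\sin^3(\hat\alpha)\le\tfrac14$. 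This yields $\mu(\sigma_k,\alpha_k)\le\mu_k(1-\tfrac14\sin(\alpha_k))$ for every $k\ge0$ and every $\sin(\alpha_k)\in(0,\sin(\hat\alpha)]$. The second claimed inequality is then the instance in which the algorithm takes the largest admissible step $\sin(\alpha_k)=\sin(\hat\alpha)=C_5/n^{1/4}$ — a step of that size being feasible by Lemma \ref{positiveCond} together with the positivity and $\theta$-neighborhood requirements in Step 2 of Algorithm \ref{mainAlgo} — which makes $\mu_k(1-\tfrac14\sin(\alpha_k))=\mu_k(1-C_5/(4n^{1/4}))$.

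The displayed estimates are essentially routine bookkeeping, so the only genuine point of care — and the source of the exponent $n^{1/4}$ rather than a constant — is the term $2C_2C_3\sqrt{n}\,\sin^2(\alpha_k)$: it dominates the perturbation for large $n$ precisely because the second-derivative vectors satisfy only $\|D\ddot{s}\|,\|D^{-1}\ddot{x}\|\le C_3 n\mu_k^{1/2}$ (Lemma \ref{extension}), a factor $\sqrt{n}$ worse than the first-order quantities $\|D\dot{s}\|,\|D^{-1}\dot{x}\|\le C_2\sqrt{n}\mu_k^{1/2}$ of Lemma \ref{main1}. Balancing $\sqrt{n}\,\sin^2(\alpha_k)$ against a constant forces $\sin(\alpha_k)=\mathcal{O}(n^{-1/4})$, which is exactly $\sin(\hat\alpha)$. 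One must also verify that $C_5$ is extracted to depend only on $\sigma_{\max},C_2,C_3$ — all independent of $n$ by Lemmas \ref{main1} and \ref{extension} — and state explicitly that the chained inequality records the guaranteed per-iteration reduction rate at the maximal step, rather than a bound that would hold for every smaller $\alpha_k$.
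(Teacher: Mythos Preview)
Your proof is correct and follows essentially the same route as the paper: start from the exact expression (\ref{diffstepsize}), bound $1-\cos(\alpha_k)$ by $\sin^2(\alpha_k)$ via Lemma \ref{sincos}, apply the estimates of Lemma \ref{lemma5}, and then choose $C_5$ so that the three perturbative terms together contribute at most $\tfrac34\sin(\alpha_k)$. Your treatment is in fact slightly cleaner than the paper's, since you bound $\sigma_k$ by $\sigma_{\max}$ (so that $C_5$ is manifestly $k$-independent) and you explicitly note that the second inequality in (\ref{polynoty}) is the instance $\sin(\alpha_k)=\sin(\hat\alpha)$ rather than a bound valid for all smaller $\alpha_k$.
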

holds.
\begin{proof}
Using (\ref{diffstepsize}), Lemmas \ref{sincos} and \ref{inequalities}, we have
\begin{eqnarray}
\mu (\sigma_k, \alpha_k) & = & \mu_k (1- \sin(\alpha_k)) +\sigma_k \mu_k (1-\cos(\alpha_k)) 
-  \frac{ \dot{x}^{\T}\dot{s} }{n} (1-\cos(\alpha_k))^2
\nonumber \\
& & - \frac{ \dot{x}^{\T} \ddot{s}  
    + \dot{s}^{\T} \ddot{x} }{n}\sin(\alpha_k) (1-\cos(\alpha_k))
\nonumber \\
& \le & \mu_k \left[
1-\sin(\alpha_k) + \sigma_k \sin^2(\alpha_k) \right] 
\nonumber \\
& & + \left[ \frac{\lvert \dot{x}^{\T} \dot{s}\lvert }{n} \sin^4(\alpha_k)
+ \left(\frac{\lvert \dot{x}^{\T} \ddot{s}\lvert }{n} 
+ \frac{\lvert \ddot{x}^{\T} \dot{s}\lvert }{n} \right) \sin^3(\alpha_k)
\right]
\nonumber \\
& \le & \mu_k \left[
1-\sin(\alpha_k) + \sigma_k \sin^2(\alpha_k) + C_2^2 \sin^4(\alpha_k)+2C_2C_3 \sqrt{n} \sin^3(\alpha_k) \right]
\nonumber \\
& = & \mu_k \left[
1-\sin(\alpha_k) \left( 1 - \sigma_k \sin(\alpha_k) - C_2^2 \sin^3(\alpha_k)-2C_2C_3 \sqrt{n} \sin^2(\alpha_k)
\right) \right].
\nonumber 
\end{eqnarray}
Let 
\[
C_5=\frac{1}{5 \max \{ \sigma_k, C_2^2, 2C_2C_3 \}}.
\]
then, for all 
$ \sin(\alpha_k) \in (0, \sin(\hat{\alpha})]$ 
and $\sigma_{\min} \le  \sigma_k \le \sigma_{\max}$, 
inequality (\ref{polynoty}) holds.
\hfill\qed
\end{proof}

\begin{lemma} 
There exists a positive constant $C_6$ independent of $n$ and an
$\check{\alpha}$ defined by $\sin(\check{\alpha}) \ge \frac{C_6}{n^{\frac{3}{2}}}$ 
such that if $x_i^{k}s_i^{k} \ge \theta \mu_{k}$ holds, then for $\forall k \ge 0$,
$\forall i \in \{ 1, \ldots, n\}$, and $\sin(\alpha) \in (0,\sin(\check{\alpha})]$, the following relation
\begin{equation}
x_i^{k+1}s_i^{k+1} \ge \theta \mu_{k+1}
\label{polycompl}
\end{equation}
\label{barrier}
\end{lemma}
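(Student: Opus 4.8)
The plan is to reuse the pointwise lower bound on $x_i^{k+1}s_i^{k+1}$ already obtained in inequality (\ref{thirdBound1}) together with the exact expansion of $\mu_{k+1}$ from (\ref{diffstepsize}), and then show that for $\sin(\alpha_k)$ small enough the difference $x_i^{k+1}s_i^{k+1}-\theta\mu_{k+1}$ is nonnegative because it is controlled from below by a single term of order $\mu_k\sin^2(\alpha_k)$ with an $n$-independent positive coefficient, while every remaining term carries at least one extra factor of $\sin(\alpha_k)$ and only a bounded power of $n$. Note that the hypothesis $x_i^k s_i^k\ge\theta\mu_k$ is exactly what is needed to invoke (\ref{thirdBound1}).

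First I would subtract $\theta$ times the expansion of $\mu_{k+1}$ coming from (\ref{diffstepsize}) from the right-hand side of (\ref{thirdBound1}). The two copies of $\theta\mu_k(1-\sin(\alpha_k))$ cancel and what remains is
\[ x_i^{k+1}s_i^{k+1}-\theta\mu_{k+1}\ \ge\ (1-\theta)\,\sigma_k\mu_k\,(1-\cos(\alpha_k))\ -\ E_i, \]
where $E_i$ gathers (with signs) the four terms $[\ddot{x}_i^k\dot{s}_i^k+\dot{x}_i^k\ddot{s}_i^k]\sin(\alpha_k)(1-\cos(\alpha_k))$, $(\ddot{x}_i^k\ddot{s}_i^k-\dot{x}_i^k\dot{s}_i^k)(1-\cos(\alpha_k))^2$, $\theta\frac{\dot{x}^{\T}\dot{s}}{n}(1-\cos(\alpha_k))^2$, and $\theta\frac{\dot{x}^{\T}\ddot{s}+\dot{s}^{\T}\ddot{x}}{n}\sin(\alpha_k)(1-\cos(\alpha_k))$. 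By the left half of Lemma \ref{sincos}, $1-\cos(\alpha_k)\ge\frac12\sin^2(\alpha_k)$, so the positive ``gain'' term is at least $\frac{(1-\theta)\sigma_{\min}}{2}\mu_k\sin^2(\alpha_k)$; here it is essential that $\theta<1$ and $\sigma_k\ge\sigma_{\min}>0$, so that this coefficient is a positive constant independent of $n$. Then I would bound $|E_i|$ using the right half $1-\cos(\alpha_k)\le\sin^2(\alpha_k)$ of Lemma \ref{sincos} together with the componentwise and averaged estimates of Lemma \ref{lemma5} (namely $|\dot{x}_i^k\dot{s}_i^k|\le C_2^2 n\mu_k$, $|\ddot{x}_i^k\dot{s}_i^k|,|\dot{x}_i^k\ddot{s}_i^k|\le C_2C_3 n^{3/2}\mu_k$, $|\ddot{x}_i^k\ddot{s}_i^k|\le C_3^2 n^2\mu_k$, and the bounds in (\ref{rela1})). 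After factoring out $\mu_k\sin^2(\alpha_k)$, the quotient $|E_i|/(\mu_k\sin^2(\alpha_k))$ is bounded by an $n$-independent linear combination of $n^{3/2}\sin(\alpha_k)$, $\sqrt{n}\sin(\alpha_k)$, $n^2\sin^2(\alpha_k)$, and $\sin^2(\alpha_k)$; when $\sin(\alpha_k)\le C_6 n^{-3/2}$ each of these is at most an $n$-independent constant, the binding one being $n^{3/2}\sin(\alpha_k)$, which is precisely why the threshold $\sin(\check{\alpha})=C_6 n^{-3/2}$ is the right order. Collecting the resulting coefficients, one gets for $C_6\le1$
\[ \frac{x_i^{k+1}s_i^{k+1}-\theta\mu_{k+1}}{\mu_k\sin^2(\alpha_k)}\ \ge\ \frac{(1-\theta)\sigma_{\min}}{2}\ -\ \big(4C_2C_3+C_3^2+2C_2^2\big)C_6, \]
so choosing $C_6=\min\big\{1,\ \frac{(1-\theta)\sigma_{\min}}{2(4C_2C_3+C_3^2+2C_2^2)}\big\}$ makes the right-hand side nonnegative, and since $\mu_k>0$ by Assumption 3 and $\sin(\alpha_k)>0$, we conclude $x_i^{k+1}s_i^{k+1}\ge\theta\mu_{k+1}$.

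The only genuine difficulty is the bookkeeping in the second paragraph: one must check that after dividing through by $\sin^2(\alpha_k)$ no surviving term grows in $n$ faster than $n^{3/2}\sin(\alpha_k)$ (otherwise the exponent $3/2$ in the threshold would be wrong), and that the negative contributions coming from $\theta\mu_{k+1}$ — in particular the $O(n^{3/2})$-sized one — are genuinely beaten by the gain term rather than merely comparable to it, which is where the strict inequality $\theta<1$ together with the extra factor $\sin(\alpha_k)\le C_6 n^{-3/2}$ enters. No estimate beyond Lemmas \ref{sincos} and \ref{lemma5} and the lower bound on $\mu_k$ supplied by Assumption 3 is required.
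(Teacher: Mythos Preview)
Your proof is correct and follows essentially the same route as the paper: both subtract $\theta$ times the expansion (\ref{diffstepsize}) of $\mu_{k+1}$ from the lower bound (\ref{thirdBound1}) on $x_i^{k+1}s_i^{k+1}$, isolate the positive gain term $(1-\theta)\sigma_k\mu_k(1-\cos(\alpha_k))$, and then control the remaining error terms with Lemmas \ref{sincos} and \ref{lemma5}. The only cosmetic difference is that the paper factors out $(1-\cos(\alpha_k))$ before taking absolute values, whereas you first convert everything to powers of $\sin(\alpha_k)$ and then divide through by $\mu_k\sin^2(\alpha_k)$; your version has the advantage of producing an explicit admissible constant $C_6$, which the paper does not spell out.
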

holds.
\begin{proof}
Using (\ref{thirdBound1}) and (\ref{diffstepsize}), we have
\begin{eqnarray}
& & x_i^{k+1} s_i^{k+1} - \theta \mu_{k+1} \nonumber \\
& \ge & \sigma_k \mu_k (1-\theta)  (1-\cos(\alpha_k))  \nonumber \\
&  & 
-\left[ \ddot{x}_i^k \dot{s}_i^k+\dot{x}_i^k \ddot{s}_i^k - 
\frac{ \theta ( \dot{x}^{\T} \ddot{s}  + \dot{s}^{\T} \ddot{x}) }{n} \right]
 \sin(\alpha_k) (1-\cos(\alpha_k))
 \nonumber \\ &  & 
+ \left[ \ddot{x}_i^k \ddot{s}_i^k-\dot{x}_i^k \dot{s}_i^k 
+\frac{ \theta (\dot{x}^{\T}\dot{s} ) }{n} \right]
(1-\cos(\alpha_k))^2
\end{eqnarray}
Therefore, if 
\begin{eqnarray}
\sigma_k \mu_k (1-\theta) & - &
\left[ \ddot{x}_i^k \dot{s}_i^k+\dot{x}_i^k \ddot{s}_i^k - 
\frac{ \theta ( \dot{x}^{\T} \ddot{s}  + \dot{s}^{\T} \ddot{x}) }{n} \right]
 \sin(\alpha_k)  \nonumber \\
& + & \left[ \ddot{x}_i^k \ddot{s}_i^k-\dot{x}_i^k \dot{s}_i^k 
+\frac{ \theta (\dot{x}^{\T}\dot{s} ) }{n} \right]
(1-\cos(\alpha_k)) \ge 0,
\label{condition}
\end{eqnarray}
then
\[
 x_i^{k+1} s_i^{k+1} \ge \theta \mu_{k+1}.
\]
The inequality (\ref{condition}) holds if
\begin{eqnarray}
\sigma_k \mu_k (1-\theta) 
& - & \left( |\ddot{x}_i^k \dot{s}_i^k|+|\dot{x}_i^k \ddot{s}_i^k|
+\Big| \frac{ \theta ( \dot{x}^{\T} \ddot{s}  + \dot{s}^{\T} \ddot{x}) }{n} \Big| \right) \sin(\alpha_k)
\nonumber \\
& - & \left( |\ddot{x}_i^k \ddot{s}_i^k|+|\dot{x}_i^k \dot{s}_i^k |
+\Big| \frac{ \theta (\dot{x}^{\T}\dot{s} ) }{n} \Big| \right)
 \sin^2(\alpha_k) \ge 0. \nonumber
\end{eqnarray}
Using Lemma \ref{lemma5}, we can easily find some $\check{\alpha}$ 
defined by $\sin(\check{\alpha}) \ge \frac{C_6}{n^{\frac{3}{2}}}$ 
to meet the above inequality.
\hfill\qed
\end{proof}

Now the convergence result follows from the standard argument
using the following theorem given in \cite{wright97}.

\begin{theorem}
Let $\epsilon \in (0,1)$ be given. Suppose that an algorithm generates a sequence of iterations 
$\{ \chi_k \}$ that satisfies
\begin{equation}
\chi_{k+1} \le \left( 1 - \frac{\delta}{n^{\omega}} \right) \chi_k, 
\hspace{0.1in} k=0, 1, 2, \ldots,
\end{equation}
for some positive constants $\delta$ and $\omega$. Then there 
exists an index $K$ with
\[
K={\mathcal O}(n^{\omega}\log({\chi_0}/{\epsilon}))
\]
such that 
\[
\chi_k \le \epsilon \hspace{0.1in} {\rm for} \hspace{0.1in} \forall k \ge K.
\]
\hfill \qed
\label{white97}
\end{theorem}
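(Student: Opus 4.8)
The plan is to unroll the recursion and pass to logarithms; this is the standard contraction-to-tolerance argument. First I would iterate the hypothesis $\chi_{k+1}\le\bigl(1-\delta/n^{\omega}\bigr)\chi_k$ downward from index $k$ to index $0$, obtaining the closed form $\chi_k\le\bigl(1-\delta/n^{\omega}\bigr)^{k}\chi_0$ for every $k\ge 0$. Since $\delta$ and $\omega$ are positive, either $\delta/n^{\omega}\ge 1$, in which case $\chi_1\le 0$ and (as $\chi_k$ is nonnegative) the conclusion holds with $K=1$, or $0<1-\delta/n^{\omega}<1$, in which case the sequence $\{\chi_k\}$ is nonincreasing. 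So in the nontrivial case it suffices to exhibit a single index $K$ with $\chi_K\le\epsilon$; monotonicity then gives $\chi_k\le\epsilon$ for all $k\ge K$.

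Next I would take natural logarithms in the closed-form bound, $\log\chi_k\le k\log\bigl(1-\delta/n^{\omega}\bigr)+\log\chi_0$, and apply the elementary inequality $\log(1+t)\le t$, valid for $t>-1$, with $t=-\delta/n^{\omega}$. This yields $\log\chi_k\le-k\,\delta/n^{\omega}+\log\chi_0$. Hence $\chi_k\le\epsilon$ is guaranteed as soon as $-k\,\delta/n^{\omega}+\log\chi_0\le\log\epsilon$, i.e.\ as soon as $k\ge(n^{\omega}/\delta)\log(\chi_0/\epsilon)$ (with the convention $K=0$ if $\chi_0\le\epsilon$).

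I would then set $K=\bigl\lceil(n^{\omega}/\delta)\log(\chi_0/\epsilon)\bigr\rceil$. By the displayed arithmetic $\chi_K\le\epsilon$, and by the monotonicity established in the first step $\chi_k\le\epsilon$ for all $k\ge K$. Finally, because $\delta$ is a fixed positive constant, $K=\mathcal{O}\bigl(n^{\omega}\log(\chi_0/\epsilon)\bigr)$, which is the asserted bound.

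There is no real obstacle in this argument; it is a routine estimate. The only two points deserving a line of care are: (i) verifying that $1-\delta/n^{\omega}$ genuinely lies in $(0,1)$ so that the recursion is contractive and the iterates are monotone, separating off the trivial case $\delta/n^{\omega}\ge 1$; and (ii) using the sharp bound $\log(1+t)\le t$ rather than a cruder one, so that the iteration count comes out proportional to $n^{\omega}$ and not to a larger power. Everything else is the unrolling of the recursion and the manipulation of the logarithm.
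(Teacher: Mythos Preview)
Your argument is correct and is precisely the standard contraction-to-tolerance proof (unroll the recursion, bound $\log(1-\delta/n^{\omega})\le-\delta/n^{\omega}$, and solve for $k$). The paper itself does not supply a proof of this theorem: it is quoted as a known result from Wright's book \cite{wright97} and stated with an immediate \qed, so there is nothing further to compare against.
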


In view of Lemmas \ref{basic}, \ref{positiveCond}, \ref{objDec}, 
\ref{barrier}, and Theorem \ref{white97}, 
we can state our main result as the following 
\begin{theorem}
Algorithms~\ref{mainAlgo} is a polynomial algorithm with
polynomial complexity bound of 
${\mathcal O}({n}^{\frac{3}{2}} \max \{ \log({(x^0)^{\T}s^0}/{\epsilon}), \log({r_b^0}/{\epsilon}),\log({r_c^0}/{\epsilon}) \} )$.
\end{theorem}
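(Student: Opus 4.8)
The plan is to assemble the three one-step estimates already established---Lemma \ref{positiveCond} (positivity of the next iterate), Lemma \ref{objDec} (decrease of $\mu$), and Lemma \ref{barrier} (the neighborhood condition $x_i^{k+1}s_i^{k+1}\ge\theta\mu_{k+1}$)---and feed the resulting per-iteration contraction factor into the generic convergence result of Theorem \ref{white97}. The first thing I would do is note that the three step-size thresholds furnished by those lemmas, $\sin(\bar\alpha)\ge C_4/\sqrt n$, $\sin(\hat\alpha)\ge C_5/n^{1/4}$, and $\sin(\check\alpha)\ge C_6/n^{3/2}$, are each of the form ``$\sin(\alpha_k)$ may be taken anywhere in $(0,\text{threshold}]$'', so the binding one is the smallest, which is $\Theta(n^{-3/2})$. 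Hence there is a constant $c_0>0$ independent of $n$ and a $\kappa_0={\mathcal O}(n^{3/2})$ such that choosing $\sin(\alpha_k)\in[\kappa_0^{-1},\,c_0 n^{-3/2}]$ together with any $\sigma_k\in[\sigma_{\min},\sigma_{\max}]$ makes all four requirements in Step 2 of Algorithm \ref{mainAlgo} hold simultaneously; here I would lean on the boundedness lemmas \ref{main1}, \ref{extension}, \ref{lemma5}, which make the constants $C_2,\dots,C_6$ (hence $c_0$) independent of $n$ and uniform in $k$.

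The second step is an induction on $k$ showing this choice is always available. Lemmas \ref{positiveCond}, \ref{objDec}, \ref{barrier} all presuppose the invariant $x^k\circ s^k\ge\theta\mu_k e$ at the current iterate, so I would check the base case from the initialization (for the usual $x^0=s^0=\xi e$ one has $x^0_is^0_i=\mu_0\ge\theta\mu_0$) and then argue that, given the invariant at $k$, the step chosen above produces $(x^{k+1},s^{k+1})>0$, $\mu_{k+1}<\mu_k$, and $x_i^{k+1}s_i^{k+1}\ge\theta\mu_{k+1}$ for every $i$, which both discharges Step 2 and re-establishes the invariant at $k+1$. With the sequence $\{(\sigma_k,\alpha_k)\}$ now fixed, Lemma \ref{objDec} gives $\mu_{k+1}\le\mu_k\bigl(1-\tfrac14\sin(\alpha_k)\bigr)\le\mu_k\bigl(1-c_0/(4n^{3/2})\bigr)$, so Theorem \ref{white97} with $\chi_k=\mu_k$, $\omega=\tfrac32$, $\delta=c_0/4$ yields $\mu_k\le\epsilon$ after ${\mathcal O}\bigl(n^{3/2}\log(\mu_0/\epsilon)\bigr)$ iterations; since $\mu_0=(x^0)^{\T}s^0/n$ this is ${\mathcal O}\bigl(n^{3/2}\log((x^0)^{\T}s^0/\epsilon)\bigr)$.

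The third step handles the infeasibility residuals. By Lemma \ref{basic}, $(r_b^k,r_c^k)=\nu_k(r_b^0,r_c^0)$ with $\nu_{k+1}=\nu_k(1-\sin(\alpha_k))\le\nu_k\bigl(1-c_0 n^{-3/2}\bigr)$, so both $\|r_b^k\|$ and $\|r_c^k\|$ contract by the same factor, and Theorem \ref{white97} (with $\omega=\tfrac32$, $\delta=c_0$) drives each below $\epsilon$ after ${\mathcal O}\bigl(n^{3/2}\log(\|r_b^0\|/\epsilon)\bigr)$ and ${\mathcal O}\bigl(n^{3/2}\log(\|r_c^0\|/\epsilon)\bigr)$ iterations respectively. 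Taking the maximum of the three iteration counts, all of the Step 0 termination tests are met, which gives exactly the stated bound ${\mathcal O}\bigl(n^{3/2}\max\{\log((x^0)^{\T}s^0/\epsilon),\log(\|r_b^0\|/\epsilon),\log(\|r_c^0\|/\epsilon)\}\bigr)$.

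The main obstacle I anticipate is the bookkeeping in steps one and two rather than any hard analysis: one has to confirm that a single pair $(\sigma_k,\alpha_k)$ can satisfy all four Step 2 conditions at once---precisely where the three thresholds must be compared and the $n^{-3/2}$ one identified as dominant for all $n\ge1$ (adjusting $c_0$ over the finitely many small $n$)---and one must ensure the induction is not circular, since each of Lemmas \ref{positiveCond}, \ref{objDec}, \ref{barrier} uses the neighborhood invariant at step $k$ to conclude it at step $k+1$. Everything else is a direct invocation of Theorem \ref{white97} together with the harmless comparison of $\log\mu_0$ and $\log((x^0)^{\T}s^0)$, which differ only by $\log n$ and are absorbed into the ${\mathcal O}(\cdot)$.
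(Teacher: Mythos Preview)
Your proposal is correct and takes essentially the same approach as the paper: the paper's proof is nothing more than the sentence ``In view of Lemmas \ref{basic}, \ref{positiveCond}, \ref{objDec}, \ref{barrier}, and Theorem \ref{white97}, we can state our main result,'' so you have in fact written out the argument the paper only gestures at---identifying the $n^{-3/2}$ threshold from Lemma \ref{barrier} as the binding one and feeding the resulting contraction factors for $\mu_k$ and $\nu_k$ into Theorem \ref{white97}. One small correction: the base case of your induction need not assume $x^0=s^0=\xi e$; the paper allows any $(x^0,s^0)>0$ and instead chooses $\theta=\min\{10^{-6},\,0.1\min_i x_i^0 s_i^0/\mu_0\}$ (Section 6.1), which guarantees $x_i^0 s_i^0\ge\theta\mu_0$ by construction.
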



\section{Algorithm 2}

This algorithm is a simplified version of Algorithm \ref{mainAlgo}. 
The only difference of the two algorithm is that this algorithm
searches optimizers in a larger neighborhood defined by
(\ref{infeasible2}). From the discussion in Section~\ref{arcSearch},
the following arc-search infeasible interior-point algorithm is well-defined.

\begin{algorithm} {\ } \\ 
Data: $A$, $b$, $c$.  \hspace{0.1in} {\ } \\
Parameter: $\epsilon \in (0,1)$, $\sigma_{\min} \in (0,1)$, 
$\sigma_{\max} \in (0,1)$, and $\rho \in (0,1)$. {\ } \\
Initial point: ${\lambda}^0=0$ and $(x^0, s^0)>0$.
\newline
{\bf for} iteration $k=0,1,2,\ldots$
\begin{itemize}
\item[] Step 0: If ${\| r_b^k \|} \le {\epsilon}$, ${\| r_c^k \|} \le {\epsilon}$, 
and $\mu_k \le \epsilon$, stop.
\item[] Step 1: Calculate ${\mu}_k$, $r_b^k$, $r_c^k$, $\dot{\lambda}$, $\dot{s}$, 
$\dot{x}$, $p_x^k$, $p_{\lambda}^k$, $p_s^k$, $q_x^k$, $q_{\lambda}^k$, and $q_s^k$. 
\item[] Step 2: Find some appropriate $\alpha_k \in (0,\pi/2]$
and $\sigma_k \in [\sigma_{\min},\sigma_{\max}]$ to satisfy 
\begin{equation}
x(\sigma_k, \alpha_k) \ge \phi_k e, \hspace{0.1in} 
s(\sigma_k, \alpha_k) \ge \psi_k e, \hspace{0.1in}  
\mu_k > \mu(\sigma_k, \alpha_k).  
 \label{stepCond}
\end{equation}
\item[] Step 3: Set $(x^{k+1}, \lambda^{k+1}, s^{k+1})
=(x(\sigma_k, \alpha_k), \lambda(\sigma_k, \alpha_k), s(\sigma_k, \alpha_k))$ and 
$\mu_{k+1}=\mu(\sigma_k, \alpha_k)$.
\item[] Step 4: Set $k+1 \rightarrow k$. Go back to Step 1.
\end{itemize}
{\bf end (for)} 
\hfill \qed
\label{mainAlgo2}
\end{algorithm}

\begin{remark}
It is clear that the only difference between Algorithm~\ref{mainAlgo} and
Algorithm ~\ref{mainAlgo2} is in Step 2, where Algorithm~\ref{mainAlgo2} 
does not require $x(\sigma_k, \alpha_k)s(\sigma_k, \alpha_k) \ge \theta \mu(\sigma_k, \alpha_k)$. We have seen from Lemma \ref{barrier} that this
requirement is the main barrier to achieve a better polynomial bound.
\end{remark}

Denote
\begin{equation}
\gamma = \min \{ 1, \rho\beta \}.
\label{gamma}
\end{equation}

\begin{lemma}
If $ \{ \frac{\nu_k^2}{\mu_k} \} >0$ is bounded below from zero and
$\beta >0$, then, there is a positive constant $\theta >0$, such that 
$x^k_i s^k_i \ge \theta \mu_k$ for $\forall i \in \{ 1, 2, \ldots, n \}$ and $\forall k \ge 0$.
\end{lemma}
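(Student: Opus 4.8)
The plan is to obtain the bound by a short chain of inequalities that combines the definition of $\beta$ with the two standing hypotheses; no new machinery is needed. First I would record what $\beta>0$ buys us: writing $\underline{x}^k=\min_i x_i^k$ and $\underline{s}^k=\min_j s_j^k$ as in (\ref{ijmath}), the definitions (\ref{betak})--(\ref{beta}) give $\min\{\underline{x}^k,\underline{s}^k\}=\beta_k\nu_k\ge\beta\nu_k$ for every $k$, so that, coordinatewise, $x_i^k\ge\underline{x}^k\ge\beta\nu_k>0$ and $s_i^k\ge\underline{s}^k\ge\beta\nu_k>0$ (positivity of $\nu_k$ is Assumption 4, and $x_i^k,s_i^k>0$ is maintained by Step 2 of Algorithm \ref{mainAlgo2}). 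Multiplying, $x_i^k s_i^k\ge\beta^2\nu_k^2$ for all $i$ and all $k$.

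Next I would turn the $\nu_k^2$ lower bound into a $\mu_k$ lower bound using the second hypothesis. Set $\eta:=\inf_{k\ge 0}\nu_k^2/\mu_k$, which is strictly positive by assumption. Then $x_i^k s_i^k\ge\beta^2\nu_k^2=\beta^2(\nu_k^2/\mu_k)\mu_k\ge\beta^2\eta\,\mu_k$, uniformly in $i$ and $k$. Finally I would fix $\theta:=\min\{\tfrac12,\ \beta^2\eta\}\in(0,1)$: if $\beta^2\eta\le\tfrac12$ the desired inequality is exactly what was just shown, and if $\beta^2\eta>\tfrac12$ then $x_i^k s_i^k\ge\beta^2\eta\mu_k\ge\tfrac12\mu_k\ge\theta\mu_k$. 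Either way $x_i^k s_i^k\ge\theta\mu_k$ for all $i\in\{1,\dots,n\}$ and all $k\ge0$, which is the claim.

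Within the lemma itself there is essentially no obstacle — it is a two-line estimate once the hypotheses are granted, the only care being the bookkeeping that puts $\theta$ in $(0,1)$. The real content, and the reason the statement is phrased conditionally, lies in discharging the two hypotheses for the iterates of Algorithm \ref{mainAlgo2}, which (unlike Algorithm \ref{mainAlgo}) does not impose the centrality requirement in Step 2: the bound $\beta>0$ is the analogue of Lemma \ref{betaGt0} and should follow from the Step 2 constraints $x(\sigma_k,\alpha_k)\ge\phi_k e$, $s(\sigma_k,\alpha_k)\ge\psi_k e$ together with the $\alpha_k$-rescaling of Section~\ref{implSec}, while the positivity of $\inf_k\nu_k^2/\mu_k$ is the genuinely nontrivial point — both $\nu_k=\prod_{j<k}(1-\sin(\alpha_j))$ and $\mu_k$ decrease, and one must show they decrease at comparable rates — which I expect to be where the subsequent convergence analysis for Algorithm \ref{mainAlgo2} concentrates its effort.
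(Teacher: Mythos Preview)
Your argument is correct and follows essentially the same route as the paper: both derive $x_i^k s_i^k \ge c\,\nu_k^2$ for a positive constant $c$ and then divide through by $\mu_k$ using the hypothesis $\inf_k \nu_k^2/\mu_k>0$. The only cosmetic difference is that the paper reaches the $\nu_k^2$ lower bound via the Step~2 constraints $x^{k}\ge\phi_{k-1}e$, $s^{k}\ge\psi_{k-1}e$ and the auxiliary constant $\gamma=\min\{1,\rho\beta\}$, obtaining $x_i^k s_i^k \ge \gamma^2\nu_k^2$, whereas you invoke the definition of $\beta$ directly to get the (slightly sharper) $x_i^k s_i^k \ge \beta^2\nu_k^2$; the remainder of the two proofs is identical.
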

\begin{proof}
By the definition of $\beta$, we have $\beta \le \frac{\underline{x}^k }{\nu_k} \le \frac{x_i^k}{\nu_k}$ and $\beta \le \frac{\underline{s}^k }{\nu_k} \le \frac{s_i^k}{\nu_k}$, which can be written as
\[ \underline{x}^k \ge \beta \nu_k >0, \hspace{0.1in} 
\hspace{0.1in} \underline{s}^k \ge \beta \nu_k >0. \] 
Since $\phi_k = \min \{ \rho \underline{x}^k, \nu_k \}$, we have either
$\phi_k = \rho \underline{x}^k \ge \rho \beta \nu_k $ or
$\phi_k = \nu_k$, which means that 
\begin{equation}
\phi_k \ge \min \{ 1, \rho \beta \} \nu_k = \gamma \nu_k.
\end{equation}
Since $\psi_k = \min \{ \rho s_\jmath^k, \nu_k \}$, we can show
\begin{equation}
\psi_k \ge \min \{ 1, \rho \beta \} \nu_k = \gamma \nu_k.
\end{equation}
Using (\ref{posi1}) and (\ref{posi2}), the definition of $\phi_k$ and $\psi_k$, and the above two formulas, we have 
\[ x^{k}_i s^{k}_i \ge \phi_{k-1} \psi_{k-1} \ge \gamma^2 \nu_{k-1}^2 
> \gamma^2 \nu_{k-1}^2 (1-\sin(\alpha_{k-1}))^2 = \gamma^2 \nu_k^2 >0.
\]
Let $\theta = \displaystyle\inf_{k } \{ \frac{\gamma^2 \nu_k^2}{\mu_k} \}$, then, 
$\gamma^2 \{ \frac{\nu_k^2}{\mu_k} \} \ge \gamma^2 \displaystyle\inf_{k } \{ \frac{\nu_k^2}{\mu_k} \}= \theta >0$, and we have
\[
x^k_i s^k_i \ge \frac{\gamma^2 \nu_k^2}{\mu_k} \mu_k \ge \theta \mu_k.
\]
This finishes the proof.
\hfill
\qed
\end{proof}

Since $\nu_k>0$ for all iteration $k$ (see Assumption 4 and 
Section \ref{implSec}.10), we immediately have the following Corollary.
\begin{corollary}
If Algorithm \ref{mainAlgo2} terminates in finite iterations $K$, then
we have 
$\theta = \displaystyle\min_{k \le K} \{ \frac{\gamma^2 \nu_k }{\mu_k} \} >0$ 
is a constant independent of $n$, and
$x^k_i s^k_i \ge \theta \mu_k$ for $\forall i \in \{ 1, 2, \ldots, n \}$ and for $0 \le k \le K$.
\end{corollary}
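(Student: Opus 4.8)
The Corollary follows almost immediately from the preceding Lemma once the hypotheses of that Lemma are verified for a finite run of Algorithm~\ref{mainAlgo2}. So the plan is as follows.

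\textbf{Step 1: Reduce to checking the Lemma's hypotheses.} The preceding Lemma asserts that if $\{\nu_k^2/\mu_k\}$ is bounded below from zero and $\beta>0$, then there is a constant $\theta>0$ with $x_i^k s_i^k \ge \theta\mu_k$ for all $i$ and all $k$. For the Corollary I only need this over the finite index set $0\le k\le K$, so I would first observe that when the algorithm stops after finitely many iterations, every quantity involved is indexed by a finite set, and the infimum/minimum over $k\le K$ is attained and is a genuine constant. Thus it suffices to show (a) $\beta>0$, i.e. $\inf_k\beta_k>0$, and (b) $\nu_k^2/\mu_k>0$ for each $k\le K$ (which is the finite-$K$ analogue of the boundedness-below hypothesis).

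\textbf{Step 2: Verify $\beta>0$.} This is exactly Lemma~\ref{betaGt0}: since the algorithm is well-defined, there is $\rho\in(0,1)$ with $\rho\ge 1-\sin(\alpha_k)$ for all $k$, hence $\beta\ge\min\{\underline{x}^0,\underline{s}^0,1\}>0$ because $(x^0,s^0)>0$. I would simply cite Lemma~\ref{betaGt0} here; the point is that $\beta$ is a constant independent of $n$ because $\underline{x}^0,\underline{s}^0$ are chosen by the user and do not scale with $n$.

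\textbf{Step 3: Verify $\nu_k^2/\mu_k>0$ for $k\le K$.} By Assumption~4 (and the rescaling of $\alpha_k$ described in Section~\ref{implSec}.10), $\nu_k=\prod_{j=0}^{k-1}(1-\sin(\alpha_j))>0$ for every $k$; also $\mu_k=(x^k)^\Tr s^k/n>0$ since $(x^k,s^k)>0$ along the whole run. Hence each ratio $\gamma^2\nu_k^2/\mu_k$ is a strictly positive real number, and since there are only finitely many of them, $\theta:=\min_{k\le K}\{\gamma^2\nu_k^2/\mu_k\}>0$; moreover $\theta$ depends only on $\rho$, $\beta$, and the trajectory, not on $n$ directly. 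Then the chain of inequalities in the Lemma's proof — namely $x_i^k s_i^k\ge\phi_{k-1}\psi_{k-1}\ge\gamma^2\nu_{k-1}^2(1-\sin(\alpha_{k-1}))^2=\gamma^2\nu_k^2\ge(\gamma^2\nu_k^2/\mu_k)\mu_k\ge\theta\mu_k$ — goes through verbatim for $1\le k\le K$, and the $k=0$ case holds because $x_i^0 s_i^0>0=\theta\cdot 0$ is vacuous (or one adjusts $\theta$ by $\min\{\underline{x}^0\underline{s}^0/\mu_0,\dots\}$ if $\mu_0>0$, which it is).

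\textbf{Main obstacle.} There is no real analytic difficulty here; the only thing to be careful about is the claim that $\theta$ is ``independent of $n$.'' Strictly speaking $\nu_k$, $\mu_k$ and $K$ themselves depend on $n$ through the problem data, so ``independent of $n$'' must be read as ``a fixed positive constant for the given problem instance,'' exactly as $\beta$ is treated in Lemma~\ref{betaGt0} and in the arguments following it. I would state this interpretation explicitly so the Corollary dovetails correctly with the way $\theta$ is used as a fixed constant in the complexity analysis of Algorithm~\ref{mainAlgo}. Beyond that caveat, the proof is a one-line appeal to the preceding Lemma together with Assumption~4 and the positivity of the iterates.
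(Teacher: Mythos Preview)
Your approach is correct and essentially the same as the paper's: the paper simply notes that $\nu_k>0$ for all $k$ (Assumption~4 and the rescaling in Section~\ref{implSec}.10) and declares the Corollary immediate, while you spell out the same reasoning more carefully by invoking Lemma~\ref{betaGt0} for $\beta>0$ and observing that a minimum of finitely many positive ratios is positive. One small cleanup: your handling of $k=0$ is muddled (writing ``$0=\theta\cdot 0$'' when $\mu_0>0$); just include $k=0$ in the minimum by taking $\theta\le \min_i x_i^0 s_i^0/\mu_0$, and note that the Corollary's displayed formula with $\nu_k$ rather than $\nu_k^2$ is a typo inherited from the paper---the argument (yours and the Lemma's) actually uses $\gamma^2\nu_k^2/\mu_k$.
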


\begin{proposition} 
Assume that $\|r_b^0\|$, $\|r_c^0\|$, and $\mu_0$ are all finite. 
Then, Algorithm \ref{mainAlgo2} terminates in finite iterations. 
\label{lastProp}
\end{proposition}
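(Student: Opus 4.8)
The plan is to argue by contradiction. Suppose Algorithm~\ref{mainAlgo2} generates an infinite sequence $\{(x^k,\lambda^k,s^k)\}$. By Step~2 the duality measure $\mu_k$ is strictly decreasing, and by Lemma~\ref{basic} the factor $\nu_k=\prod_{j=0}^{k-1}(1-\sin(\alpha_j))$ is non-increasing; both are bounded below by $0$, hence convergent, $\mu_k\downarrow\mu_\infty\ge 0$ and $\nu_k\downarrow\nu_\infty\ge 0$. Since $\|r_b^k\|=\nu_k\|r_b^0\|$ and $\|r_c^k\|=\nu_k\|r_c^0\|$ with $\|r_b^0\|,\|r_c^0\|,\mu_0$ finite, and both sequences are monotone, an infinite run forces $\nu_\infty>0$ or $\mu_\infty>0$; it therefore suffices to rule out each.

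For the case $\nu_\infty>0$: then $\nu_k^2/\mu_k\ge\nu_\infty^2/\mu_0>0$ for all $k$, so $\{\nu_k^2/\mu_k\}$ is bounded below from zero, and combined with $\beta>0$ (Lemma~\ref{betaGt0}, whose hypothesis $\rho\ge 1-\sin(\alpha_k)$ is secured by the rescaling of $\alpha_k$), the Lemma immediately preceding Proposition~\ref{lastProp} yields a constant $\theta>0$ with $x_i^ks_i^k\ge\theta\mu_k$ for every $k$ and $i$. Hence the iterates of Algorithm~\ref{mainAlgo2} remain in a neighborhood of type ${\mathcal F}_1$, so under Assumptions~1, 2, and 5, Lemmas~\ref{main1}, \ref{extension}, \ref{lemma5}, \ref{positiveCond}, and \ref{objDec} apply to them unchanged; by the reasoning of Lemmas~\ref{positiveCond} and \ref{objDec} there is then a constant $C>0$ (depending on $\theta$ and $n$, not on $k$) for which a step with $\sin(\alpha_k)\ge C/\sqrt{n}$ meets every condition of Step~2, and Step~2 selects one at least this large. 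Consequently $\nu_k\le(1-C/\sqrt{n})^k\to 0$, contradicting $\nu_\infty>0$.

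For the remaining case, $\nu_\infty=0$ (so $r_b^k,r_c^k\to 0$) with $\mu_\infty>0$: I would use the two estimates that survive without an ${\mathcal F}_1$ bound, namely $\|D\dot s\|,\|D^{-1}\dot x\|\le C_2\sqrt{n\mu_k}$ from Lemma~\ref{main1} (which needs only $\beta>0$) and $x_i^ks_i^k\ge\gamma^2\nu_k^2$, which is obtained inside the proof of the preceding Lemma from $\beta>0$ and the positivity part of Step~2. With these one bounds $\ddot x,\ddot s$ (their norms now carrying a factor $1/\nu_k$) and checks that the positivity and $\mu$-decrease conditions of Step~2 hold for every $\alpha_k$ with $\sin(\alpha_k)$ below a threshold of order $\nu_k/\sqrt{n\mu_k}$, and that along such steps $\mu_{k+1}\le\mu_k\bigl(1-c\,\sin(\alpha_k)\bigr)$ with a fixed $c>0$. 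Since $\nu_k=\prod_{j<k}(1-\sin(\alpha_j))\to 0$ is equivalent to $\sum_j\sin(\alpha_j)=\infty$, this forces $\mu_k\le\mu_0\prod_{j<k}\bigl(1-c\,\sin(\alpha_j)\bigr)\to 0$, contradicting $\mu_\infty>0$. Hence $\nu_\infty=\mu_\infty=0$, and then for all large $k$ one has $\|r_b^k\|,\|r_c^k\|\le\epsilon$ and $\mu_k\le\epsilon$, so Step~0 stops the algorithm.

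The delicate step is precisely this last regime, $\nu_k\to 0$ while $\mu_k\not\to 0$: the clean bound $\|D\ddot s\|,\|D^{-1}\ddot x\|\le C_3 n\sqrt{\mu_k}$ of Lemma~\ref{extension} fails there, because $\|(X^kS^k)^{-1/2}\|$ is then controlled only by $1/(\gamma\nu_k)$ rather than by $1/\sqrt{\theta\mu_k}$, so the step-size floor that powers Algorithm~\ref{mainAlgo} degrades as $\nu_k\to 0$. The crux of the argument is to show that this degradation is exactly offset by the decay of $\nu_k$ — concretely, that the largest admissible $\sin(\alpha_k)$ is of order $\nu_k$ in this regime, so that the recursion $\nu_{k+1}\approx\nu_k(1-c\nu_k)$ decays like $1/k$, which keeps $\sum_k\sin(\alpha_k)$ divergent and at the same time drives $\mu_k$ to zero.
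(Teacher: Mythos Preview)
The paper's own proof is a one-liner: it invokes the proposition at the end of Section~\ref{arcSearch} (the one concluding from Propositions~\ref{muDec}, \ref{secondBound}, \ref{thirdBound}) which asserts that, under Assumption~3, there is a fixed constant $\delta>0$ such that $r_b^k$, $r_c^k$, and $\mu_k$ each drop by at least $\delta$ per iteration. Since $\|r_b^0\|$, $\|r_c^0\|$, and $\mu_0$ are finite, the stopping criterion is met after finitely many iterations. No case analysis, no limits $\mu_\infty,\nu_\infty$, no recourse to Lemmas~\ref{main1}--\ref{objDec}.

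Your route is genuinely different and much heavier. You argue by contradiction, split on whether $\nu_\infty>0$, and in the favorable case rebuild the ${\mathcal F}_1$-type machinery of Section~4 to force $\nu_k\to 0$. That part is reasonable, though you lean on Lemma~\ref{betaGt0}, whose hypothesis $\rho\ge 1-\sin(\alpha_k)$ is \emph{not} what the rescaling of Section~6.10 delivers; in the paper that hypothesis is instead justified by the uniform lower bound on $\alpha_k$ coming from the very proposition you avoid citing. The second case, $\nu_k\to 0$ while $\mu_k\not\to 0$, you yourself flag as ``delicate'' and leave as a sketch: the bounds on $\ddot x,\ddot s$ degrade like $1/\nu_k$, you hope the admissible $\sin(\alpha_k)$ is of order $\nu_k$, and you hope the resulting recursion $\nu_{k+1}\approx\nu_k(1-c\nu_k)$ still forces $\sum\sin(\alpha_k)=\infty$. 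This is not a proof --- and, more to the point, within the paper's framework it is unnecessary. Assumption~3 (iterates bounded away from zero, treated as standing and ``guaranteed'' by the rescaling trick) makes Propositions~\ref{muDec} and~\ref{secondBound} yield a \emph{uniform} lower bound on $\alpha_k$; then $\nu_k$ and $\mu_k$ both decrease at a uniform rate and your Case~2 is vacuous.

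In short: the paper's proof is to cite one already-established proposition; your proof tries to reprove termination essentially without Assumption~3, which forces you into a delicate regime you do not close. If you accept the paper's standing assumptions, the short argument suffices.
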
 
\begin{proof}
Since $\|r_b^0\|$, $\|r_c^0\|$, and $\mu_0$ are finite, in view 
of Proposition 4.4, in every iteration, these variables decrease
at least a constant. Therefore, the algorithm will terminate in 
finite steps.
\hfill \qed
\end{proof}

Proposition \ref{lastProp}  implies that $x^k_i s^k_i \ge \theta \mu_k$ for
$\forall i \in \{ 1, 2, \ldots, n \}$ and for $0 \le k \le K$. 
$x^k_i s^k_i \ge \theta \mu_k$ is the most strict condition 
required in Section 4 to show the polynormiality of Algorithm
\ref{mainAlgo}. Since Algorithm \ref{mainAlgo2} does not check
the condition $x^k_i s^k_i \ge \theta \mu_k$, and it checks only
(\ref{stepCond}), from Lemmas \ref{positiveCond} and \ref{objDec}, 
we conclude

\begin{theorem}
If Algorithms~\ref{mainAlgo2} terminates in finite iterations,
it converges in polynomial iterations with a complexity bound of 
${\mathcal O}({n}^{\frac{1}{2}} \max \{ \log({(x^0)^{\T}s^0}/{\epsilon}), \log({r_b^0}/{\epsilon}),\log({r_c^0}/{\epsilon}) \} )$.
\label{mainTheorem2}
\end{theorem}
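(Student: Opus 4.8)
The plan is to reduce Theorem~\ref{mainTheorem2} to the machinery already assembled for Algorithm~\ref{mainAlgo}. The key observation is that although Algorithm~\ref{mainAlgo2} never tests the barrier inequality $x_i^k s_i^k\ge\theta\mu_k$, this inequality nevertheless holds throughout the run with a constant $\theta>0$ that does not depend on $n$. This is exactly what the corollary stated just above supplies, once Proposition~\ref{lastProp} guarantees that (under finite $\|r_b^0\|,\|r_c^0\|,\mu_0$) the algorithm terminates at some index $K$: tracing $\theta$ back through $\gamma=\min\{1,\rho\beta\}$ and $\beta\ge\min\{\underline{x}^0,\underline{s}^0,1\}$ (Lemma~\ref{betaGt0}), together with $\nu_k>0$ (Assumption~4), one sees that $\theta=\min_{0\le k\le K}\{\gamma^2\nu_k^2/\mu_k\}$ is positive and controlled by the initial data and the fixed parameters $\rho,\sigma_{\min},\sigma_{\max}$ only. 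Consequently every iterate of Algorithm~\ref{mainAlgo2} lies in the neighborhood ${\mathcal F}_1$ for this $\theta$, so the whole tower of constants $C_1=1/\beta$, $C_2$, $C_3$, and hence $C_4,C_5$, occurring in Lemmas~\ref{main1}, \ref{positiveCond}, and \ref{objDec} are all independent of $n$.

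With the barrier condition in force I would then invoke Lemmas~\ref{positiveCond} and~\ref{objDec} verbatim. In Step 2 of Algorithm~\ref{mainAlgo2} one may always choose $\sigma_k\in[\sigma_{\min},\sigma_{\max}]$ and $\alpha_k$ with
\[
\sin(\alpha_k)\ge\frac{\bar{C}}{\sqrt{n}},\qquad \bar{C}:=\min\{C_4,C_5\},
\]
so that simultaneously $x(\sigma_k,\alpha_k)\ge\phi_k e$, $s(\sigma_k,\alpha_k)\ge\psi_k e$ (Lemma~\ref{positiveCond}) and $\mu_{k+1}\le\mu_k(1-\sin(\alpha_k)/4)\le\mu_k(1-\bar{C}/(4\sqrt{n}))$ (Lemma~\ref{objDec}); here the two thresholds $C_4/\sqrt{n}$ and $C_5/n^{1/4}$ collapse into a single one of order $n^{-1/2}$ because $\min\{C_4/\sqrt{n},C_5/n^{1/4}\}\ge\bar{C}/\sqrt{n}$ for $n\ge1$. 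The point that produces the exponent $\tfrac12$ instead of $\tfrac32$ is precisely that Algorithm~\ref{mainAlgo2} drops the test $x_i^{k+1}s_i^{k+1}\ge\theta\mu_{k+1}$, so the far smaller step restriction $\sin(\alpha_k)\le C_6/n^{3/2}$ forced by Lemma~\ref{barrier} in Algorithm~\ref{mainAlgo} is never imposed. For the infeasibility residuals, Lemma~\ref{basic} gives $\|(r_b^{k+1},r_c^{k+1})\|=(1-\sin(\alpha_k))\|(r_b^k,r_c^k)\|\le(1-\bar{C}/\sqrt{n})\|(r_b^k,r_c^k)\|$, an even faster contraction.

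To conclude, put $\chi_k=\max\{\mu_k,\|r_b^k\|,\|r_c^k\|\}$. Each of the three quantities contracts by a factor at most $1-\delta/\sqrt{n}$ per iteration with $\delta=\bar{C}/4$, hence so does $\chi_k$, and Theorem~\ref{white97} with $\omega=\tfrac12$ produces an index $K={\mathcal O}(\sqrt{n}\log(\chi_0/\epsilon))$ at which the stopping test in Step 0 is met. Since $\chi_0=\max\{\mu_0,\|r_b^0\|,\|r_c^0\|\}\le\max\{(x^0)^{\T}s^0,\|r_b^0\|,\|r_c^0\|\}$, this is the asserted bound ${\mathcal O}(n^{1/2}\max\{\log((x^0)^{\T}s^0/\epsilon),\log(r_b^0/\epsilon),\log(r_c^0/\epsilon)\})$.

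The one genuine obstacle is the first step --- certifying that $\theta$, and therefore $C_4$ and $C_5$, is independent of $n$; everything afterward is a direct appeal to Lemmas~\ref{positiveCond}, \ref{objDec}, \ref{basic} and Theorem~\ref{white97}. The delicacy is that $\theta$ is defined through the run-dependent quantities $\nu_k$ and $\mu_k$, so one must argue that $\nu_k^2/\mu_k$ cannot be pushed below an $n$-free threshold before termination; here one should exploit the monotone behavior of $\nu_k^2/\mu_k$ along the iteration together with the finiteness of $K$, and keep the argument non-circular by noting that the finite termination in Proposition~\ref{lastProp} rests only on the $n$-independent constant-size decrease established at the end of Section~\ref{arcSearch}, which in turn uses Assumption~3 alone and not the sharper $n^{-1/2}$ step length derived here.
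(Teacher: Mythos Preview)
Your proposal is correct and follows essentially the same route as the paper: use the Corollary (built on Proposition~\ref{lastProp} and Lemma~\ref{betaGt0}) to secure $x_i^ks_i^k\ge\theta\mu_k$ with $\theta$ independent of $n$, then invoke only Lemmas~\ref{positiveCond} and~\ref{objDec} (dropping Lemma~\ref{barrier}) together with Lemma~\ref{basic} and Theorem~\ref{white97} to obtain the $\mathcal{O}(\sqrt{n})$ bound. Your write-up is in fact more explicit than the paper's one-sentence justification, and the ``genuine obstacle'' you flag---that $\theta$'s $n$-independence must not be argued circularly---is precisely the point the paper asserts in the Corollary without further elaboration.
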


\section{Implementation details}
\label{implSec}

The two proposed algorithms are very similar to the one 
in \cite{yang16} in that they are all for solving linear 
programming of standard form using arc-search. 
Implementation strategies for the four algorithms (two proposed in 
this paper, one proposed in \cite{yang16}, and Mehrotra's
algorithm described in \cite{wright97}) are in common
except algorithm-specific parameters (Section 6.1), selection of 
$\alpha_k$ (Section 6.8 for arc-search), section of $\sigma_k$
(Section 6.9), and rescale $\alpha_k$ (Section 6.10) for the two 
algorithms proposed in this paper. 
Most of these implementation strategies 
have been thoroughly discussed in \cite{yang16}. Since all these 
strategies affect the numerical efficiency, we summarize all details 
implemented in this paper and explain the reasons why some strategies 
are adopted and some are not. 

\subsection{Default parameters}

Several parameters are used in Algorithms \ref{mainAlgo} and
\ref{mainAlgo2}. In our implementation, 
the following defaults are used without a serious effort to 
optimize the results for the test problems: 
$\theta=\min \{ 10^{-6},0.1*\min \{ x^0 \circ s^0  \}/\mu_0 \}$,
$\sigma_{\min} = 10^{-6}$, $\sigma_{\max} =0.4$ for Algorithm
\ref{mainAlgo}, $\sigma_{\max} =0.3$ for Algorithm 
\ref{mainAlgo2}, $\rho=0.01$, and $\epsilon=10^{-8}$. Note that 
$\theta$ is used only in Algorithm \ref{mainAlgo} and this 
$\theta$ selection guarantees $x^0_i s^0_i \ge \theta \mu_0$.  

\subsection{Initial point selection}

Initial point selection has been known an important factor in the computational efficiency for most infeasible interior-point 
algorithms \cite{cmww97, zhang96}. We use the methods 
proposed in \cite{Mehrotra92,lms92} to generate candidate
initial points. We then compare 
\begin{equation}
\max \{ \| A x^0-b\|, \| A^{\Tr} \lambda^0 +s^0 -c \|, \mu^0 \}
\label{initpoint}
\end{equation}
obtained by these two methods and select the initial point with 
smaller value of (\ref{initpoint}) as we guess this selection may 
reduce the number of iterations (see \cite{yang16} for detail).

\subsection{Pre-process and Post-process}

Pre-solver strategies for the standard linear programming 
problems represented in the form of (\ref{LP}) and solved in 
normal equations were fully investigated in \cite{yang16}. Five 
of them were determined to be effective and efficient in application. The same set of the pre-solver is used in this paper. The post-process is also the same as in \cite{yang16}.

\subsection{Matrix scaling}

Based on the test and analysis of \cite{yang16}, it is determined
that matrix scaling does not improve efficiency in general. Therefore, 
we will not use scaling in the implementation. But the ratio 
\begin{equation}
\frac{\max{|A_{i,j}|}}{\min { \{ | A_{k,l} |  A_{k,l} \ne 0 \} } }
\label{ratio}
\end{equation}
is used to determine if pre-process rule $9$ of \cite{yang16} 
is used.

\subsection{Removing row dependency from $A$}

Removing row dependency from $A$ is studied in \cite{andersen95}, 
Andersen reported an efficient method that removes
row dependency of $A$. Based on the study in \cite{yang16},
we choose to not use this function unless we feel it is necessary 
when it is used as part of handling degenerate solutions discussed below.
To have a fair comparison of all tested algorithms, we will 
make it clear in the test report what algorithms 
and/or problems use this function and what algorithms and/or 
problems do not use this function.

\subsection{Linear algebra for sparse Cholesky matrix}

Similar to Mehrotra's algorithm, the majority of the computational 
cost of the proposed algorithms is to solve sparse Cholesky 
systems (\ref{doy3}) and (\ref{detail}), which
can be expressed as an abstract problem as follows. 
\begin{equation}
AD^2A^{\T} u = L \Lambda L^{\Tr} u = v,
\label{useLater}
\end{equation}
where $D=X^{\frac{1}{2}}S^{-\frac{1}{2}}$ is identical in (\ref{doy3}) and (\ref{detail}), but $u$ and $v$ are different vectors. 
Many popular LP solvers \cite{cmww97,zhang96} call a 
software package \cite{ np93} which uses some linear algebra 
specifically developed for the sparse Cholesky decomposition 
\cite{liu85}. However, Matlab does not yet implemented
the function with the features for ill-conditioned matrices. We implement the same method as in \cite{yang16}.

\subsection{Handling degenerate solutions}

Difficulty caused by degenerate solutions in interior-point 
methods for linear programming has been realized for a long time
\cite{ghrt93}. Similar observation was also reported in 
\cite{gmstw86}. In our implementation, we have an option for 
handling degenerate solutions as described in \cite{yang16}.

\subsection{Analytic solution of $\alpha_k$}

Given $\sigma_k$, we know that $\alpha_k$ can be calculated in 
analytic form \cite{yang09,yang16}. Since Algorithms
\ref{mainAlgo} and \ref{mainAlgo2} are slightly different from 
the ones in \cite{yang09,yang16}, the formulas to calculate 
$\alpha_k$ are slightly different too. We provide these formulas 
without giving proofs. For each 
$i \in \lbrace 1,\ldots, n \rbrace$, we can select the largest
$\alpha_{x_i}$ such that for any $\alpha \in [0, \alpha_{x_i}]$, 
the $i$th inequality of (\ref{posi1}) holds, and the largest 
$\alpha_{s_i}$ such that for any $\alpha \in [0, \alpha_{s_i}]$ 
the $i$th inequality of (\ref{posi2}) holds. We then define 
\begin{eqnarray}
{\alpha^x}=\displaystyle\min_{i \in \lbrace 1,\ldots, n \rbrace}
\lbrace \alpha_{x_i}\rbrace, 
\\
{\alpha^s}=\displaystyle\min_{i \in \lbrace 1,\ldots, n \rbrace}
\lbrace \alpha_{s_i} \rbrace, 
\\
{\alpha_k}=\min \lbrace \alpha^x, \alpha^s \rbrace,
\label{alpha}
\end{eqnarray}
where $\alpha_{x_i}$ and $\alpha_{s_i}$ can be obtained, 
using a similar argument as in \cite{yang16}, in 
analytical forms represented by $\phi_k$, 
$\dot{x}_i$, $\ddot{x}_i=p_{x_i}\sigma+q_{x_i}$, 
$\psi_k$, $\dot{s}_i$, and $\ddot{s}_i=p_{s_i}\sigma+q_{s_i}$.
In every iteration, several $\sigma$ may be tried to find the
best $\sigma_k$ and $\alpha_k$ while $\phi_k$, $\psi_k$, 
$\dot{x}$, $\dot{s}$, $p_{x}$, $p_{s}$,
$q_{x}$ and $q_{s}$ are fixed (see details in the next section).
\vspace{0.1in}

\noindent{\it Case 1 ($\dot{x}_i=0$ and $\ddot{x}_i\ne 0$)}:

\begin{equation}
\alpha_{x_i} = \left\{
\begin{array}{ll}
\frac{\pi}{2} & \quad \mbox{if $x_i -\phi_k +\ddot{x}_i \ge 0 $} \\
\cos^{-1}\left( \frac{x_i -\phi_k +\ddot{x}_i}
{\ddot{x}_i} \right) & \quad 
\mbox{if $x_i -\phi_k +\ddot{x}_i \le 0 $}.
\end{array}
\right.
\label{case1a}
\end{equation}
\noindent{\it Case 2 ($\ddot{x}_i=0$ and $\dot{x}_i\ne 0$)}:

\begin{equation}
\alpha_{x_i} = \left\{
\begin{array}{ll}
\frac{\pi}{2} & \quad \mbox{if $\dot{x}_i \le x_i -\phi_k $} \\
\sin^{-1}\left( \frac{x_i-\phi_k }
{\dot{x}_i} \right) & \quad \mbox{if $\dot{x}_i \ge x_i -\phi_k $}
\end{array}
\right.
\label{case2a}
\end{equation}
\noindent{\it Case 3 ($\dot{x}_i>0$ and $\ddot{x}_i>0$)}:

Let 
\begin{equation}
\beta = \sin^{-1} \left( \frac{\ddot{x}_i }
{\sqrt{\dot{x}_i^2+\ddot{x}_i^2}} \right).
\label{beta1}
\end{equation}
\begin{equation}
\alpha_{x_i} = \left\{
\begin{array}{ll}
\frac{\pi}{2} & \quad \mbox{if $x_i-\phi_k + \ddot{x}_i \ge 
\sqrt{\dot{x}_i^2+\ddot{x}_i^2}$} \\
\sin^{-1}\left( \frac{x_i -\phi_k + \ddot{x}_i }
{\sqrt{\dot{x}_i^2+\ddot{x}_i^2}} \right) - \sin^{-1}\left( \frac{\ddot{x}_i } 
{\sqrt{\dot{x}_i^2+\ddot{x}_i^2}} \right) & \quad 
\mbox{if $x_i -\phi_k + \ddot{x}_i \le 
\sqrt{\dot{x}_i^2+\ddot{x}_i^2}$}
\end{array}
\right.
\label{case3a}
\end{equation}
\noindent{\it Case 4 ($\dot{x}_i>0$ and $\ddot{x}_i<0$)}:

Let 
\begin{equation}
\beta = \sin^{-1} \left( \frac{-\ddot{x}_i }
{\sqrt{\dot{x}_i^2+\ddot{x}_i^2}} \right).
\label{beta2}
\end{equation}
\begin{equation}
\alpha_{x_i} = \left\{
\begin{array}{ll}
\frac{\pi}{2} & \quad \mbox{if $x_i -\phi_k + \ddot{x}_i \ge 
\sqrt{\dot{x}_i^2+\ddot{x}_i^2}$} \\
\sin^{-1}\left( \frac{x_i-\phi_k + \ddot{x}_i }
{\sqrt{\dot{x}_i^2+\ddot{x}_i^2}} \right) + \sin^{-1}\left( \frac{-\ddot{x}_i }
{\sqrt{\dot{x}_i^2+\ddot{x}_i^2}} \right) & \quad 
\mbox{if $x_i-\phi_k + \ddot{x}_i \le 
\sqrt{\dot{x}_i^2+\ddot{x}_i^2}$}
\end{array}
\right.
\label{case4a}
\end{equation}
\noindent{\it Case 5 ($\dot{x}_i<0$ and $\ddot{x}_i<0$)}:

Let 
\begin{equation}
\beta = \sin^{-1} \left( \frac{-\ddot{x}_i }
{\sqrt{\dot{x}_i^2+\ddot{x}_i^2}} \right).
\label{beta3}
\end{equation}
\begin{equation}
\alpha_{x_i} = \left\{
\begin{array}{ll}
\frac{\pi}{2} & \quad \mbox{if $x_i-\phi_k + \ddot{x}_i \ge 0$} \\
\pi - \sin^{-1} \left( \frac{-(x_i -\phi_k + \ddot{x}_i) }
{\sqrt{\dot{x}_i^2+\ddot{x}_i^2}} \right) - \sin^{-1}\left( \frac{-\ddot{x}_i }
{\sqrt{\dot{x}_i^2+\ddot{x}_i^2}} \right) 
& \quad \mbox{if $x_i-\phi_k + \ddot{x}_i \le 0$}
\end{array}
\right.
\label{case5a}
\end{equation}
\noindent{\it Case 6 ($\dot{x}_i<0$ and $\ddot{x}_i>0$)}:

\begin{equation}
\alpha_{x_i} = \frac{\pi}{2}.
\end{equation}
\noindent{\it Case 7 ($\dot{x}_i=0$ and $\ddot{x}_i=0$)}:

\begin{equation}
\alpha_{x_i} = \frac{\pi}{2}.
\end{equation}

\noindent{\it Case 1a ($\dot{s}_i=0$, $\ddot{s}_i \ne 0$)}:

\begin{equation}
\alpha_{s_i} = \left\{
\begin{array}{ll}
\frac{\pi}{2} & \quad \mbox{if $s_i-\psi_k +\ddot{s}_i \ge 0 $} \\
\cos^{-1}\left( \frac{s_i -\psi_k +\ddot{s}_i}
{\ddot{s}_i} \right) & \quad 
\mbox{if $s_i-\psi_k +\ddot{s}_i \le 0 $}.
\end{array}
\right.
\label{case1b}
\end{equation}
\noindent{\it Case 2a ($\ddot{s}_i=0$ and $\dot{s}_i \ne 0$)}:

\begin{equation}
\alpha_{s_i} = \left\{
\begin{array}{ll}
\frac{\pi}{2} & \quad \mbox{if $\dot{s}_i \le s_i -\psi_k $} \\
\sin^{-1}\left( \frac{s_i -\psi_k }
{\dot{s}_i} \right) & \quad \mbox{if $\dot{s}_i \ge s_i -\psi_k $}
\end{array}
\right.
\label{case2b}
\end{equation}
\noindent{\it Case 3a ($\dot{s}_i>0$ and $\ddot{s}_i>0$)}:

\begin{equation}
\alpha_{s_i} = \left\{
\begin{array}{ll}
\frac{\pi}{2} & \quad \mbox{if $s_i -\psi_k + \ddot{s}_i \ge 
\sqrt{\dot{s}_i^2+\ddot{s}_i^2}$} \\
\sin^{-1}\left( \frac{s_i -\psi_k + \ddot{s}_i }
{\sqrt{\dot{s}_i^2+\ddot{s}_i^2}} \right) - \sin^{-1}\left( \frac{\ddot{s}_i } 
{\sqrt{\dot{s}_i^2+\ddot{s}_i^2}} \right) & \quad 
\mbox{if $s_i -\psi_k + \ddot{s}_i < 
\sqrt{\dot{s}_i^2+\ddot{s}_i^2}$}
\end{array}
\right.
\label{case3b}
\end{equation}
\noindent{\it Case 4a ($\dot{s}_i>0$ and $\ddot{s}_i<0$)}:

\begin{equation}
\alpha_{s_i} = \left\{
\begin{array}{ll}
\frac{\pi}{2} & \quad \mbox{if $s_i -\psi_k + \ddot{s}_i \ge 
\sqrt{\dot{s}_i^2+\ddot{s}_i^2}$} \\
\sin^{-1}\left( \frac{s_i -\psi_k + \ddot{s}_i }
{\sqrt{\dot{s}_i^2+\ddot{s}_i^2}} \right) + \sin^{-1}\left( \frac{-\ddot{s}_i }
{\sqrt{\dot{s}_i^2+\ddot{s}_i^2}} \right) & \quad 
\mbox{if $s_i-\psi_k + \ddot{s}_i \le 
\sqrt{\dot{s}_i^2+\ddot{s}_i^2}$}
\end{array}
\right.
\label{case4b}
\end{equation}
\noindent{\it Case 5a ($\dot{s}_i<0$ and $\ddot{s}_i<0$)}:

\begin{equation}
\alpha_{s_i} = \left\{
\begin{array}{ll}
\frac{\pi}{2} & \quad \mbox{if $s_i-\psi_k + \ddot{s}_i \ge 0$} \\
\pi - \sin^{-1} \left( \frac{-(s_i -\psi_k + \ddot{s}_i) }
{\sqrt{\dot{s}_i^2+\ddot{s}_i^2}} \right) - \sin^{-1}\left( \frac{-\ddot{s}_i }
{\sqrt{\dot{s}_i^2+\ddot{s}_i^2}} \right) 
& \quad \mbox{if $s_i -\psi_k + \ddot{s}_i \le 0$}
\end{array}
\right.
\label{case5b}
\end{equation}
\noindent{\it Case 6a ($\dot{s}_i<0$ and $\ddot{s}_i>0$)}:

\begin{equation}
\alpha_{s_i} = \frac{\pi}{2}.
\end{equation}
\noindent{\it Case 7a ($\dot{s}_i=0$ and $\ddot{s}_i=0$)}:

\begin{equation}
\alpha_{s_i} = \frac{\pi}{2}.
\label{case7b}
\end{equation}

\subsection{Select $\sigma_k$}

From the convergence analysis, it is clear that the best strategy 
to achieve a large step size is to select both $\alpha_k$ and
$\sigma_k$ at the same time similar to the idea of \cite{yang13a}.
Therefore, we will find a $\sigma_k$ which maximizes the step size
$\alpha_k$. The problem can be expressed as
\begin{equation}
\displaystyle\max_{\sigma \in [\sigma_{\min},\sigma_{\max}]} 
\hspace{0.05in}
\displaystyle\min_{i \in \{ 1, \ldots,n\} }
\{ \alpha_{x_i}(\sigma), \alpha_{s_i}(\sigma) \},
\label{maxmin}
\end{equation}
where $0 < \sigma_{\min} < \sigma_{\max} < 1$,
$\alpha_{x_i}(\sigma)$ and $\alpha_{s_i}(\sigma)$ are calculated 
using (\ref{case1a})-(\ref{case7b}) for a fixed 
$\sigma \in [\sigma_{\min},\sigma_{\max}]$. 
Problem (\ref{maxmin}) is a minimax problem without regularity 
conditions involving derivatives. Golden section search 
\cite{kiefer52} seems to be a reasonable method for solving this 
problem. However, given the fact from (\ref{posi1}) that 
$\alpha_{x_i}({\sigma})$ is a monotonic increasing function of 
$\sigma$ if $p_{x_i}>0$ and $\alpha_{x_i}({\sigma})$ is a 
monotonic decreasing function of $\sigma$ if $p_{x_i}<0$ 
(and similar properties hold for $\alpha_{s_i}(\sigma)$),
we can use the condition
\begin{equation}
\min \{ \displaystyle \min_{i \in p_{x_i}<0} \alpha_{x_i}(\sigma),
\displaystyle \min_{i \in p_{s_i}<0} \alpha_{s_i}(\sigma) \}
>
\min \{ \displaystyle \min_{i \in p_{x_i}>0} \alpha_{x_i}(\sigma),
\displaystyle \min_{i \in p_{s_i}>0} \alpha_{s_i}(\sigma) \},
\label{determ}
\end{equation} 
to devise an efficient bisection search to solve (\ref{maxmin}).

\begin{algorithm} {\ } \\ 
Data: $(\dot{x},\dot{s})$, $(p_x, p_s )$, $(q_x, q_s)$, $(x^k,s^k)$,
$\phi_k$, and $\psi_k$. {\ } \\
Parameter: $\epsilon \in (0,1)$, $\sigma_{lb}=\sigma_{\min}$,
$\sigma_{ub}=\sigma_{\max} \le 1$. {\ } \\
{\bf for} iteration $k=0,1,2,\ldots$
\begin{itemize}
\item[] Step 0: If $\sigma_{ub}-\sigma_{lb} \le \epsilon$, set
$\alpha=\displaystyle\min_{i \in \{ 1, \ldots,n\} }
\{ \alpha_{x_i}(\sigma), \alpha_{s_i}(\sigma) \}$, stop.
\item[] Step 1: Set $\sigma=\sigma_{lb}+0.5(\sigma_{ub}-\sigma_{lb})$.
\item[] Step 2: Calculate $\alpha_{x_i}(\sigma)$ and 
$\alpha_{s_i}(\sigma)$ using (\ref{case1a})-(\ref{case7b}).
\item[] Step 3: If (\ref{determ}) holds, set $\sigma_{lb}=\sigma$, 
else set $\sigma_{ub}=\sigma$.
\item[] Step 4: Set $k+1 \rightarrow k$. Go back to Step 1.
\end{itemize}
{\bf end (for)} 
\hfill \qed
\label{bisectionSearch}
\end{algorithm}

It is known that Golden section search yields a new interval 
whose length is $0.618$ of the previous interval in all iterations
\cite{luenberger84}, while the proposed algorithm yields a 
new interval whose length is $0.5$ of the previous interval, 
therefore, is more efficient.

For Algorithm \ref{mainAlgo}, after executing Algorithm 
\ref{bisectionSearch}, we may still need to further reduce
$\alpha_k$ using Golden section or bisection to satisfy
\begin{subequations}
\begin{gather}
\mu (\sigma_k, \alpha_k) < \mu_k, 
\label{objCheck}  \\
x(\sigma_k, \alpha_k) s(\sigma_k, \alpha_k) 
\ge \theta \mu(\sigma_k, \alpha_k)e.
\label{prodCheck}
\end{gather}
\end{subequations}
For Algorithm \ref{mainAlgo2}, after executing Algorithm
\ref{bisectionSearch}, we need to check only (\ref{objCheck})
and decide if further reduction of $\alpha_k$ is needed.

\begin{remark}
Comparing Lemmas \ref{positiveCond} and \ref{objDec}, we guess 
that the restriction of satisfying the condition of
$\mu (\sigma_k, \alpha_k) < \mu_k$ is weaker than the 
restriction of satisfying the conditions of
$x(\sigma_k, \alpha_k) \ge \phi_k$ and 
$s(\sigma_k, \alpha_k) \ge \psi_k$ which are solved in 
(\ref{maxmin}). Indeed, we observed that $\sigma_k$ 
and $\alpha_k$ obtained by solving (\ref{maxmin})
always satisfy the weaker restriction. But we keep this check for
the safety concern.
\end{remark}

\subsection{Rescale $\alpha_k$}

To maintain $\nu_k>0$, in each iteration, after an $\alpha_k$ is 
found as in the above process, we rescale 
$\alpha_k=\min \{0.9999 \alpha_k,0.99 \pi/2 \} < 0.99 \pi/2 $ so 
that $\nu_k=\nu_{k-1}(1-\sin(\alpha_k))>0$ holds in every iteration. 
Therefore, Assumption $4$ is always satisfied. We notice that the
rescaling also prevents $x^k$ and $s^k$ from getting too close to
the zero in early iterations, which may cause problem of solving
(\ref{useLater}). 

\subsection{Terminate criteria}

The main stopping criterion used in the implementations is 
slightly deviated from the one described in previous sections 
but follows the convention used by most infeasible interior-point software
implementations, such as LIPSOL \cite{zhang96}
\[
\frac{\|r_b^k\|}{\max \lbrace 1, \| b\| \rbrace }
+\frac{\|r_c^k \|}{\max \lbrace 1, \| c\|  \rbrace }
+\frac{ \mu_k }{\max \lbrace 1, \| c^{\T}x^k \|, \|b^{\T}\lambda^k \|  \rbrace } 
< 10^{-8}.
\] 

In case that the algorithms fail to find a good search direction,
the program also stops if step sizes $\alpha_k^x < 10^{-8}$ and
$\alpha_k^s < 10^{-8}$. 

Finally, if (a) due to the numerical problem, $r_b^{k}$ or $r_c^{k}$ does not decrease but 
$10r_b^{k-1}<r_b^{k}$ or $10r_c^{k-1}<r_c^{k}$, or (b) if $\mu<10^{-8}$, the program stops.

\section{Numerical Tests}

The two algorithms proposed in this paper are implemented 
in Matlab functions and named as {\tt arclp1.m} and 
{\tt arclp2.m}. These two algorithms are compared with two 
efficient algorithms, an arc-search algorithm proposed in 
\cite{yang16} (implemented as {\tt curvelp.m}) and the 
well-known Mehrotra's algorithm \cite{wright97,Mehrotra92}
(implemented as {\tt mehrotra.m})\footnote{The 
implementation of  {\tt curvelp.m} and {\tt mehrotra.m} are
described in detail in \cite{yang16}.} as described below. 

The main cost of the four algorithms in each iteration
is the same, involving the linear algebra for 
sparse Cholesky decomposition which is the first equation of 
(\ref{useLater}). The cost of Cholesky decomposition is  
$\mathcal O(n^3)$ which is much higher than  $\mathcal O(n^2)$,
the cost of solving the second equation of (\ref{useLater}). 
Therefore, we conclude that the iteration 
count is a good measure to compare the performance for these 
four algorithms. All Matlab codes of the above four algorithms are tested 
against to each other using the benchmark Netlib problems.
The four Matlab codes use exactly the same initial point, the 
same stopping criteria, the same pre-process and post-process 
so that the comparison of the performance of the four algorithms 
is reasonable. Numerical tests for all algorithms have been 
performed for all Netlib linear programming problems that are 
presented in standard form, except {\tt Osa\_60} ($m=10281$ 
and $n=232966$) because the PC computer used 
for the testing does not have enough memory to handle this 
problem. The iteration numbers used to solve these 
problems are listed in Table 1.

\begin{center}
\begin{longtable}{|c|c|c|c|c|}
\caption{Comparison of arclp1.m, arclp2.m, curvelp.m, and mehrotra.m for problems in Netlib}\\
\hline    
Problem  & algorithm   & iter   &  obj    & infeasibility   \\
\hline

Adlittle &  curvelp.m    & 15 &   2.2549e+05   & 1.0e-07 \\
         &  mehrotra.m   & 15 &    2.2549e+05   & 3.4e-08    \\
         &    arclp1.m   & 16  &    2.2549e+05   &  3.0e-11  \\
         &    arclp2.m   & 17 &  2.2549e+05  &   8.0e-11  \\     \hline
Afiro    &   curvelp.m   & 9  &   -464.7531    & 1.0e-11 \\
         &  mehrotra.m   &  9 & -464.7531   & 8.0e-12   \\
         &    arclp1.m   &  9 &  -464.7531   & 6.2e-13     \\
         &    arclp2.m   &  9 &  -464.7531 &  1.0e-12    \\  \hline
Agg      &   curvelp.m   & 18  &   -3.5992e+07  & 5.0e-06 \\
         &  mehrotra.m   &  22 & -3.5992e+07 & 5.2e-05  \\
         &    arclp1.m   &  20   &  -3.5992e+07   &   3.7e-06   \\
         &    arclp2.m   &  20 &   -3.5992e+07   & 7.0e-06        \\ \hline
Agg2     &   curvelp.m   & 18  &   -2.0239e+07  & 4.6e-07 \\
         &  mehrotra.m   & 20  & -2.0239e+07 & 5.2e-07  \\
         &    arclp1.m   &  21 &   -2.0239e+07   &  3.1e-08    \\
         &    arclp2.m   &  21 &  -2.0239e+07  &   2.6e-08       \\ \hline
Agg3     &   curvelp.m   & 17  &   1.0312e+07   & 3.1e-08 \\
         &  mehrotra.m   &  18 & 1.0312e+07  & 8.8e-09  \\
         &    arclp1.m   &  20 &   1.0312e+07  &   1.5e-08    \\
         &    arclp2.m   &  20 &  1.0312e+07   &   1.8e-08       \\ \hline
Bandm    &   curvelp.m   & 19  &  -158.6280    & 3.2e-11 \\
         &  mehrotra.m   &  22 &  -158.6280   & 8.3e-10  \\
         &    arclp1.m   &  20 &  -158.6280  &  3.6e-11     \\
         &    arclp2.m   &  20 &  -158.6280   &  3.4e-11 \\ \hline
Beaconfd &    curvelp.m  & 10  &  3.3592e+04  & 1.4e-12 \\
         &  mehrotra.m   &  11 &  3.3592e+04  & 1.4e-10  \\
         &    arclp1.m   &  11 &  3.3592e+04  & 1.8e-12    \\
         &    arclp2.m   &  11 &  3.3592e+04  &  6.0e-12     \\ \hline
Blend    &   curvelp.m   & 12   &   -30.8121  & 1.0e-09 \\
         &  mehrotra.m   &   14 & -30.8122    & 4.9e-11  \\
         &    arclp1.m   &   14 &  -30.8122   &   1.6e-12    \\
         &    arclp2.m   &  14  & -30.8121    &  2.5e-12       \\ \hline
Bnl1     &    curvelp.m  & 32  &   1.9776e+03  & 2.7e-09 \\
         &  mehrotra.m   &  35 &   1.9776e+03  & 3.4e-09  \\
         &    arclp1.m   & 34  &   1.9776e+03  &  2.9e-09   \\
         &    arclp2.m   & 34  &   1.9776e+03  &   7.8e-10     \\ \hline
Bnl2+    &    curvelp.m  & 31  &   1.8112e+03  & 5.4e-10 \\
         &  mehrotra.m   &  38 &   1.8112e+03  & 9.3e-07  \\
         &    arclp1.m   &  35 &   1.8112e+03  &  3.5e-06   \\
         &    arclp2.m   & 35  &   1.8112e+03  &  1.9e-07     \\ \hline
Brandy   &   curvelp.m   &  21  &  1.5185e+03  & 3.0e-06 \\
         &  mehrotra.m   &  19  &  1.5185e+03  & 6.2e-08  \\
         &    arclp1.m   &  24  &  1.5185e+03  &  2.4e-06  \\
         &    arclp2.m   &  23  &  1.5185e+03  &  1.8e-07      \\ \hline
Degen2*  &   curvelp.m   & 16   &  -1.4352e+03  & 1.9e-08 \\
         &  mehrotra.m   &   17 &  -1.4352e+03  & 2.0e-10  \\
         &    arclp1.m   &  19  &  -1.4352e+03  &  5.9e-10  \\
         &    arclp2.m   & 19   &  -1.4352e+03  &  1.5e-08    \\ \hline
Degen3*  &   curvelp.m   &  22  &  -9.8729e+02  & 7.0e-05  \\
         &  mehrotra.m   &   22 & -9.8729e+02   & 1.2e-09  \\
         &    arclp1.m   &  35  & -9.8729e+02   &  8.6e-08   \\
         &    arclp2.m   &   26 & -9.8729e+02   & 1.2e-08   \\ \hline
fffff800 &    curve      & 26  &   5.5568e+005  & 4.3e-05 \\
         &  mehrotra.m   &   31 & 5.5568e+05  & 7.7e-04  \\
         &    arclp1.m   & 28   &    5.5568e+05   &   3.7e-09  \\
         &    arclp2.m   &  28  & 5.5568e+05  &  4.9e-09     \\ \hline
Israel   &    curvelp.m  & 23  &   -8.9664e+05  & 7.4e-08 \\
         &  mehrotra.m   &   29 & -8.9665e+05 & 1.8e-08  \\
         &    arclp1.m   &  27   & -8.9664e+05   &  3.4e-08  \\
         &    arclp2.m   &  25  &  -8.9664e+05  &  6.3e-08     \\ \hline
Lotfi    &   curvelp.m   & 14  &   -25.2647     & 3.5e-10 \\
         &  mehrotra.m   &   18 & -25.2647    & 2.7e-07  \\
         &    arclp1.m   &  16  &  -25.2646 &  7.8e-09   \\
         &    arclp2.m   &  16  & -25.2647  &   6.5e-09   \\  \hline
Maros\_r7 &   curvelp.m  & 18   & 1.4972e+06   & 1.6e-08 \\
         &  mehrotra.m   & 21   &  1.4972e+06  & 6.4e-09  \\
         &    arclp1.m   &  20  &  1.4972e+06  &  1.7e-09  \\
         &    arclp2.m   &  20  &  1.4972e+06  &  1.8e-09 \\ \hline
Osa\_07* &    curvelp.m  & 37 &   5.3574e+05  & 4.2e-07 \\
         &  mehrotra.m   & 35 &   5.3578e+05  & 1.5e-07  \\
         &    arclp1.m   & 32 &   5.3578e+05  & 8.4e-10  \\
         &    arclp2.m   & 30 &   5.3578e+05  &  5.7e-5       \\ \hline
Osa\_14  &   curvelp.m   & 35 &  1.1065e+06   & 2.0e-09 \\
         &  mehrotra.m   & 37 & 1.1065e+06    & 3.0e-08  \\
         &    arclp1.m   & 42 & 1.1065e+06    & 5.2e-09   \\
         &    arclp2.m   & 42 & 1.1065e+06    &  8.9e-09   \\ \hline
Osa\_30  &  curvelp.m    & 32 &  2.1421e+06    & 1.0e-08 \\
         &  mehrotra.m   & 36 &  2.1421e+06   & 1.3e-08  \\
         &    arclp1.m   & 42 &  2.1421e+06   &  1.3e-08    \\
         &    arclp2.m   & 39 &  2.1421e+06   &  1.6e-08   \\ \hline
Qap12    &   curvelp.m   & 22 &  5.2289e+02   & 1.9e-08 \\
         &  mehrotra.m   & 24 &  5.2289e+02   & 6.2e-09   \\
         &    arclp1.m   & 23 &  5.2289e+02    & 2.9e-10  \\
         &    arclp2.m   & 22 &  5.2289e+02    &  2.5e-09   \\ \hline
Qap15*   &    curvelp.m  & 27 &  1.0411e+03    & 3.9e-07 \\
         &  mehrotra.m   & 44 &  1.0410e+03  & 1.5e-05   \\
         &    arclp1.m   & 28 &  1.0410e+03  &  8.4e-08  \\
         &    arclp2.m   & 27 &  1.0410e+03   & 1.4e-08  \\ \hline
Qap8*    &     curvelp.m & 12 &   2.0350e+02   & 1.2e-12 \\
         &  mehrotra.m   & 13 & 2.0350e+02  & 7.1e-09   \\
         &    arclp1.m   & 12 & 2.0350e+02  &  6.2e-11   \\
         &    arclp2.m   & 12 &   2.0350e+02    & 1.1e-10  \\ \hline
Sc105    &   curvelp.m   & 10  &   -52.2021     & 3.8e-12 \\
         &  mehrotra.m   &  11 & -52.2021    & 9.8e-11  \\
         &    arclp1.m   &  11 &    -52.2021   &   2.2e-12  \\
         &    arclp2.m   &  11 &  -52.2021  &   5.6e-12   \\ \hline
Sc205    &     curvelp.m & 13  &   -52.2021     & 3.7e-10 \\
         &  mehrotra.m   &  12 & -52.2021    & 8.8e-11  \\
         &    arclp1.m   & 12  &   -52.2021     &   4.4e-11 \\
         &    arclp2.m   & 12  &   -52.2021  &   4.5e-11    \\ \hline
Sc50a    &    curvelp.m  & 10  &   -64.5751     & 3.4e-12 \\
         &  mehrotra.m   &  9  & -64.5751    & 8.3e-08  \\
         &    arclp1.m   &  10 &   -64.5751    &  8.5e-13   \\
         &    arclp2.m   & 10  &  -64.5751  &    5.9e-13    \\ \hline
Sc50b    &    curvelp.m  & 8   &   -70.0000     & 1.0e-10 \\
         &  mehrotra.m   &   8 & -70.0000    & 9.1e-07  \\
         &    arclp1.m   &  10 &   -70.0000    &  3.6e-12  \\
         &    arclp2.m   & 10  &   -70.0000 &    1.8373e-12    \\ \hline
Scagr25  &  curvelp.m    & 19  &  -1.4753e+07   & 5.0e-07 \\
         &  mehrotra.m   &  18 & -1.4753e+07 & 4.6e-09  \\
         &    arclp1.m   & 19  &  -1.4753e+07  &   1.7e-08 \\
         &    arclp2.m   & 19 &   -1.4753e+07  & 2.1e-08  \\ \hline
Scagr7   &   curvelp.m   & 15  &   -2.3314e+06  & 2.7e-09 \\
         &  mehrotra.m   &  17 & -2.3314e+06 & 1.1e-07  \\
         &    arclp1.m   & 17  &   -2.3314e+06   &   7.0e-10 \\
         &    arclp2.m   &  17 &   -2.3314e+06  &  9.2e-10  \\ \hline
Scfxm1+  &    curvelp.m  & 20 &   1.8417e+04   & 3.1e-07 \\
         &  mehrotra.m   & 22 &  1.8417e+04 & 1.6e-08  \\
         &    arclp1.m   & 21 &  1.8417e+04 & 3.3e-05 \\
         &    arclp2.m   & 21 & 1.8417e+04  &  6.8e-06    \\ \hline
Scfxm2   &     curvelp.m & 23  &   3.6660e+04   & 2.3e-06 \\
         &  mehrotra.m   &   26 &  3.6660e+04 & 2.6e-08  \\
         &    arclp1.m   &  24  &   3.6660e+04   &  4.8e-05   \\
         &    arclp2.m   & 24  &  3.6661e+04 &  1.1e-05    \\ \hline
Scfxm3+  &   curvelp.m   & 24  &  5.4901e+04  & 1.9e-06 \\
         &  mehrotra.m   &  23 &  5.4901e+04  & 9.8e-08  \\
         &    arclp1.m   &  23 &  5.4901e+04  & 1.2e-04 \\
         &    arclp2.m   &  25 &  5.4901e+04  &   4.0988e-04  \\ \hline
Scrs8    &     curvelp.m & 23  &   9.0430e+02   & 1.2e-11 \\
         &  mehrotra.m   & 30  &  9.0430e+02 & 1.8e-10  \\
         &    arclp1.m   & 28  &   9.0430e+02  &  1.0e-10   \\
         &    arclp2.m   & 27  &   9.0429e+2 &   1.2e-08     \\ \hline
Scsd1    &   curvelp.m   & 12  &   8.6666       & 1.0e-10 \\
         &  mehrotra.m   &  13 &     8.6666  & 8.7e-14  \\
         &    arclp1.m   & 11  &      8.6666  & 3.3e-15  \\
         &    arclp2.m   & 11  &  8.6667  &   5.3e-15     \\ \hline
Scsd6    &   curvelp.m   & 14  &   50.5000      & 1.5e-13 \\
         &  mehrotra.m   &  16 &     50.5000 & 8.6e-15  \\
         &    arclp1.m   & 16  &   50.5000 &   2.6e-13 \\
         &    arclp2.m   & 16  &  50.5000 &  4.8e-13     \\  \hline
Scsd8    &   curvelp.m   & 13  &   9.0500e+02   & 6.7e-10 \\
         &  mehrotra.m   &  14 &  9.0500e+02    & 1.3e-10  \\
         &    arclp1.m   & 15  &  9.0500e+02    &  2.6e-13 \\
         &    arclp2.m   & 15  &  9.0500e+02    &   3.7e-13      \\ \hline
Sctap1   &   curvelp.m   & 20  &   1.4122e+03   & 2.6e-10 \\
         &  mehrotra.m   &  27 &  1.4123e+03   & 0.0031  \\
         &    arclp1.m   & 20  &  1.4123e+03   &   1.4e-11  \\
         &    arclp2.m   & 20  &   1.4123e+03  &   1.8e-11     \\ \hline
Sctap2   &   curvelp.m   & 21   &   1.7248e+03   & 2.1e-10 \\
         &  mehrotra.m   &   21 &  1.7248e+03 & 4.4e-07  \\
         &    arclp1.m   &  22  &  1.7248e+03     &  1.4e-12 \\
         &    arclp2.m   & 20   &  1.7248e+03  &   1.1e-12  \\ \hline
Sctap3   &   curvelp.m   & 20   &   1.4240e+03   & 5.7e-08 \\
         &  mehrotra.m   &   22 &  1.4240e+03 & 5.9e-07  \\
         &    arclp1.m   &  21  &   1.4240e+03  &   1.9e-12  \\
         &    arclp2.m   &  21  &   1.4240e+03  &   2.5e-12   \\ \hline
Share1b  &   curvelp.m   & 22   &   -7.6589e+04  & 6.5e-08 \\
         &  mehrotra.m   &   25 & -7.6589e+04 & 1.5e-06  \\
         &    arclp1.m   &  26  &  -7.6589e+04  &    1.9e-07 \\
         &    arclp2.m   &  26  &  -7.6589e+04  &  2.3e-07    \\ \hline
Share2b  &   curvelp.m   & 13   &   -4.1573e+02  & 4.9e-11 \\
         &  mehrotra.m   &   15 & -4.1573e+02 & 7.9e-10  \\
         &    arclp1.m   &  15  & -4.1573e+02   &   1.4e-10  \\
         &    arclp2.m   &  15  &  -4.1573e+02 &   8.9e-11  \\ \hline
Ship04l  &   curvelp.m   & 17   &   1.7933e+06   & 5.2e-11 \\
         &  mehrotra.m   &   18 &  1.7933e+06 & 2.9e-11  \\
         &    arclp1.m   &  19  &  1.7933e+06  &  1.3e-10 \\
         &    arclp2.m   &  18  &   1.7933e+06  &    5.9e-11    \\ \hline
Ship04s  &   curvelp.m   & 17   &   1.7987e+06   & 2.2e-11 \\
         &  mehrotra.m   &   20 &  1.7987e+06 & 4.5e-09  \\
         &    arclp1.m   &  19  &    1.7987e+06   & 3.1e-10 \\
         &    arclp2.m   &  19  & 1.7987e+06   &  3.1e-09   \\ \hline
Ship08l  &   curvelp.m   & 19   &   1.9090e+06   & 1.6e-07 \\
         &  mehrotra.m   &   22 &  1.9091e+06 & 1.0e-10  \\
         &    arclp1.m   &  20  &   1.9090e+06    &  1.8e-11  \\
         &    arclp2.m   & 20   &  1.9091e+06  &  1.2e-11   \\ \hline
Ship08s  &   curvelp.m   & 17   &   1.9201e+06   & 3.7e-08 \\
         &  mehrotra.m   &   20 &  1.9201e+06 & 4.5e-12  \\
         &    arclp1.m   & 19   &    1.9201e+06   &   1.7e-09 \\
         &    arclp2.m   & 19   &   1.9201e+06 &  3.2e-11  \\ \hline
Ship12l  &   curvelp.m   & 21   &   1.4702e+06   & 4.7e-13 \\
         &  mehrotra.m   &   21 &  1.4702e+06 & 1.0e-08  \\
         &    arclp1.m   & 21   &  1.4702e+06  &  3.0e-10     \\
         &    arclp2.m   & 21   &   1.4702e+06 &  6.5e-11     \\ \hline
Ship12s  &   curvelp.m   & 17   &   1.4892e+06   & 1.0e-10 \\
         &  mehrotra.m   &  19  &  1.4892e+06 & 2.1e-13  \\
         &    arclp1.m   & 21   &    1.4892e+06    & 5.0e-11 \\
         &    arclp2.m   &  20  & 1.4892e+06 &   1.4e-10    \\ \hline
Stocfor1**&  curvelp.m   &20/14 &   -4.1132e+04  & 2.8e-10 \\
         &  mehrotra.m  &   14  & -4.1132e+04 & 1.1e-10  \\
         &    arclp1.m   &  13  &   -4.1132e+04  & 8.6890e-11 \\
         &    arclp2.m   &  14  &  -4.1132e+04  &  1.1e-10   \\ \hline
Stocfor2 &   curvelp.m   & 22   &   -3.9024e+04  & 2.1e-09 \\
         &  mehrotra.m   &   22 & -3.9024e+04 & 1.6e-09  \\
         &    arclp1.m   & 22   &  -3.9024e+04    & 4.3e-09 \\
         &    arclp2.m   & 22   &   -3.9024e+04 &   4.3e-09   \\ \hline
Stocfor3 &   curvelp.m   & 34  &  -3.9976e+04   & 4.7e-08 \\
         &  mehrotra.m   & 38  & -3.9976e+04  & 6.4e-08  \\
         &    arclp1.m   & 37  & -3.9977e+04  & 7.7e-08  \\
         &    arclp2.m   & 37  & -3.9976e+04  & 6.8e-08   \\ \hline
Truss    &   curvelp.m   & 25  &  4.5882e+05    & 1.7e-07 \\
         &  mehrotra.m   & 26  &  4.5882e+05 & 9.5e-06  \\
         &    arclp1.m   & 24  &   4.5882e+05  & 5.2e-07   \\
         &    arclp2.m   & 24  &  4.5882e+05  &  1.7e-09   \\  \hline
\end{longtable}
\end{center}

We noted in \cite{yang16} that {\tt curvelp.m} and
{\tt mehrotra.m} have difficulty for some problems because of
the degenerate solutions, but the proposed Algorithms 
\ref{mainAlgo} and \ref{mainAlgo2} implemented
as {\tt arclp1.m} and {\tt arclp2.m} have no difficult for all
test problems. Although we have an option of handling degenerate
solutions implemented in {\tt arclp1.m} and {\tt arclp2.m}, 
this option is not used for all the test problems.
But, {\tt curvelp.m} and {\tt mehrotra.m} have to use this
option because these two codes reached some degenerate solutions 
for several problems which make them difficult 
to solve or need significantly more iterations. 
For problems marked with '+', this option is called 
only for Mehrotra's method. For problems marked with '*', 
both {\tt curvelp.m} and {\tt mehrotra.m} need to call this 
option for better results. For problems with '**', this option 
is called for both {\tt curvelp.m} and {\tt mehrotra.m} but
{\tt curvelp.m} does not need to call this feature, however,
calling this feature reduces iteration count. We need to keep in 
mind that although using the option described in Section 
\ref{implSec}.7 reduces the iteration count significantly,
these iterations are significantly more expensive \cite{yy18}. Therefore, 
simply comparing iteration counts for problems marked with 
'+', '*', and '**'  will lead to a conclusion in favor of 
{\tt curvelp.m} and {\tt mehrotra.m} (which is what we will 
do in the following discussions).

Performance profile\footnote{To our best knowledge, performance 
profile was first used in \cite{ty96} to compare the performance
of different algorithms. The method has been becoming very 
popular after its merit was carefully analyzed in \cite{dm02}.}
is used to compare the efficiency of the four algorithms. Figure
1 is the performance profile of iteration numbers of the four
algorithms. It is clear that {\tt curvelp.m} is the most efficient
algorithm of the four algorithms; {\tt arclp2.m} is slightly 
better than {\tt arclp1.m} and {\tt mehrotra.m}; and the 
efficiencies of {\tt arclp1.m} and {\tt mehrotra.m} are roughly
the same. Overall, the efficiency difference of the four 
algorithms is not much significant. Given the fact that 
{\tt arclp1.m} and {\tt curve2p.m} are convergent in theory
and more stable in numerical test, we believe that these
two algorithms are better choices in practical applications.

\begin{figure}[ht]
\centerline{\epsfig{file=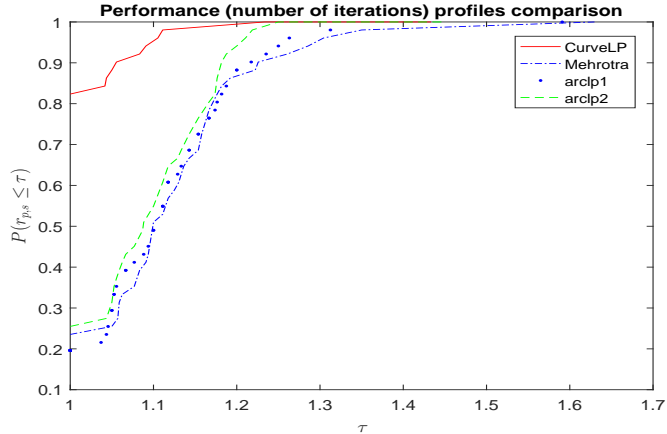,height=6cm,width=10cm}}
\caption{Performance profile comparison of the four algorithms.}
\label{fig:profile}
\end{figure}


\section{Conclusions}

In this paper, we propose two computationally efficient polynomial 
interior-point algorithms. These algorithms search the optimizers 
along ellipses that approximate the central path. The first
algorithm is proved to be polynomial and its simplified version
has better complexity bound than all existing infeasible 
interior-point algorithms and achieves the best complexity 
bound for all existing, feasible or infeasible, interior-point 
algorithms. Numerical test results for all the Netlib standard 
linear programming problems show that the algorithms are 
competitive to the state-of-the-art Mehrotra's Algorithm which 
has no convergence result.

\section{Acknowledgments}
  
This is a pre-print of an article published in Numerical Algorithms. 
The final authenticated version is available online at: 
https://doi.org/10.1007/s11075-018-0469-3

The author would like to thank Dr. Chris Hoxie, 
in the Office of Research at US NRC, for providing 
computational environment for this research.


\end{document}